\documentclass[11pt]{amsart}

\usepackage[print,nobrazil,nokeys]{vborges} %print: for white background, nokeys: for normal
\usepackage{enumerate}
\usepackage[shortlabels]{enumitem} % For upright item numbering inside theorems

\usepackage{amsmath,dsfont,amsfonts,amssymb,amsxtra,latexsym,amscd,enumerate,amsthm,xcolor}
\usepackage{fullpage}
\usepackage{dsfont} % for bold 1

\usepackage{subcaption}
\usepackage{import}
\usepackage{xifthen}
\usepackage{pdfpages}
\usepackage{transparent}

\usepackage{booktabs} % for fancy tabulars
\usepackage{multirow} % for fancy tabulars

\usepackage{csquotes}

\usetikzlibrary{calc}

\newenvironment{nalign}{
    \begin{equation}
    \begin{aligned}
}{
    \end{aligned}
    \end{equation}
    \ignorespacesafterend
}

\definecolor{cite_color}{HTML}{00629B}
\definecolor{link_color}{HTML}{182B49}

\hypersetup{
    colorlinks=true,
    linkcolor=link_color,
    filecolor=cite_color,      
    urlcolor=cite_color,
    citecolor=cite_color,
    linkbordercolor=cite_color,
    urlbordercolor=cite_color,
    citebordercolor=cite_color,
 }
\usepackage{mathtools}
\newcommand{\defeq}{\vcentcolon=}

\DeclarePairedDelimiter\norm{\lVert}{\rVert}
\DeclarePairedDelimiter\inner{\langle}{\rangle}

\makeatletter
\let\oldnorm\norm
\def\norm{\@ifstar{\oldnorm}{\oldnorm*}}
\let\oldinner\inner
\def\inner{\@ifstar{\oldinner}{\oldinner*}}
\makeatother

\renewcommand{\epsilon}{\varepsilon}
\newcommand{\1}{\mathds{1}}
\DeclareMathOperator{\cl}{\mathrm{cl}}

\DeclareMathOperator{\med}{med}

\newcommand{\s}{\Delta}
\newcommand{\kg}{\langle D\rangle}

\newcommand{\on}{\operatorname}

\usepackage[backend=biber,sorting=nyt,style=alphabetic]{biblatex}
\title{Scattering for the Klein-Gordon-Schrödinger system in three dimensions with radial data}

\author{Vitor Borges}
\address[V. Borges]{Department of Mathematics, University of California, San Diego, La Jolla 92093, USA}
\email{vborges@ucsd.edu}

\author{Tiklung Chan}
\address[T.~Chan]{Department of Mathematics, University of California, San Diego, La Jolla 92093, USA}
\email{tic017@ucsd.edu}

\date{}

\begin{document}

\begin{abstract}
We prove global well-posedness and scattering for the 3D Klein-Gordon-Schrödinger system for small
radial data in the best known global well-posedness range $(u_0, n_0, n_1)\in L^2\times H^{ -\frac{1}{2} + \epsilon } \times H^{
-\frac{3}{2} +\epsilon }$ for any $ \epsilon > 0 $.
The proof uses a global-in-time iteration scheme in the adapted function spaces $U^2_\Phi$,
radial Strichartz estimates, and bilinear restriction estimates.
\end{abstract}
\subjclass[2020]{Primary: 35Q40; Secondary: 35Q41, 35L70}

\maketitle

%\noindent
%\textbf{Keywords:} Schrödinger equation; Klein-Gordon equation; global dynamics initial-value problem; radial initial data; low regularity global wellposedness; $U^2$ and $V^2$ spaces.

\section{Introduction}\label{section 1}
\noindent
The Klein--Gordon--Schrödinger system is a basic model for the interaction of a complex nucleon field $u$
and a real meson field $n$ coupled through the Yukawa interaction.
Mathematically, the system emerges as an effective equation 
in a mean-field scaling of the Nelson model \cite{AF}, and is most commonly formulated as
\begin{align}\label{kgs}
    \begin{cases}
        (i\partial_{t} + \Delta)u = un\\
        (\square +1) n = \pm\lvert u\rvert ^2,
    \end{cases}\tag{KGS}
\end{align} 
where $u:\R^{3+1}\rightarrow\C$ and $n:\R^{3+1}\rightarrow\R$ and
$\square=\partial_t^2-\Delta$ is the wave operator. 

We are interested in the long-time behavior of solutions to
(\ref{kgs}) arising from initial data
\begin{align}\label{eq:initial}
  u_0\in H^{ s } (\R^{ 3 } ),
  \quad
  (n_0,n_1)\in H^{ r }(\R^{ 3 } )\times H^{ r-1 } (\R^{ 3 } ).
\end{align} 

%We now review the current state of the three dimensional Klein–Gordon-Schrödinger system.

Local well-posedness in low regularity was obtained by Pecher, first for $ s = r = 0$ \cite{Pecher-kgslwp} and later improved to the near optimal range \cite{Pecher-kgslwp-sharp}
\begin{align*}
  s > -\frac{1}{4},\quad r > -\frac{1}{2},\quad  r - 3s < \frac{3}{2},\quad r-2 < s < r+1
\end{align*}
by constructing a contraction in an $ X^{ s,b }$ space.
On the global side, Colliander–Holmer–Tzirakis \cite{CHT} proved global well-posedness at 
$s = r = 0$ by combining local theory with conservation of mass, and Pecher \cite{Pecher-new} subsequently obtained the best known large data global well-posedness result for
\begin{align*}
    s\geq 0,
    \quad
    s-\frac{1}{2} < r < s+\frac{3}{2}
\end{align*}
building on the works of Bejenaru, Herr, Holmer and Tataru \cite{BHHT} and Bejenaru and Herr
\cite{BH-convolutions} on the closely related Zakharov system.
These global results, however, rely on conservation of mass and therefore do not provide asymptotic information on the solution.

We say that a global solution $ (u,n) $ of (\ref{kgs}) with initial data $ (u,n,\partial_{ t
} n)|_{ t=0 } = (u_0,n_0,n_1) $ \textit{scatters} in $ H^{ s } \times H^{ r }   $ if there
exists a solution to the free equations
\begin{align*}
    \begin{cases}
        (i\partial_{t} + \Delta)u^{ * }  = 0 \\
        (\square +1) n ^{ * }  = 0,
    \end{cases}
\end{align*}  
such that
\begin{align*}
  \lVert u(t) - u^{ * } (t) \rVert  _{ H^{ s } _{ x }  } + \lVert n(t) - n ^{ * } (t) \rVert
  _{ H^{ r } _{ x }  }  + \lVert \partial_{ t } n(t) - \partial_{ t } n ^{ * } (t)  \rVert
  _{ H^{ r-1 } _{ x }  } 
  \longrightarrow 0
\end{align*} 
as $ t\to \infty $.

The first scattering result for (\ref{kgs}) was obtained by Banquet, Ferreira and
Villamizar-Roa \cite{BFV} for initial data in weak $ L^{ r }$ spaces, an  infinite
$ L^{ 2 }  $-norm setting.
More recently, You \cite{You} obtained scattering for small, localized, and highly regular initial data on Sobolev spaces using the method of spacetime resonances.
By contrast, in the non-localized setting, the asymptotic behavior of solutions remains largely open even for small initial data.

In this paper, we present the first approach to low regularity global well-posedness for
(\ref{kgs}) for small radial data without relying on energy conservation. This is the first
step towards understanding global asymptotics for non-localized initial data.  
We construct a global in time iteration scheme in carefully designed resolution spaces and,
as a byproduct, obtain linear scattering in the best known global well-posedness range $ s =
0$ and $ r > -\frac{1}{2}  $.
Our main result is the following:

\begin{thm}\label{mainthm}
  Let  $ s\geq 0 $ and let 
  $ \left( s-\frac{1}{2} \right) < r < (s+2)$.
  For all sufficiently small radial initial data
  \begin{align*}
    u_0\in H^{s} (\R^{ 3 },\C),
    \quad
    (n_0,n_1)\in H^{ r } (\R^{ 3 },\R) \times H^{ -1+r }  (\R^{ 3 },\R ),
  \end{align*} 
  the Cauchy problem (\ref{kgs}) admits a global solution
  \begin{align}\label{eq:solclass}
    u \in C_{ t }\left( \R; H_{ x }^{ s }(\R^{ 3 },\C )  \right),
    \quad
    n \in  C_{ t }\left( \R; H_{ x }^{ r }(\R^{ 3 },\R )  \right) \cap
    C_{ t }^{ 1}\left( \R; H_{ x }^{ r-1 }(\R^{ 3 },\R )  \right)
  \end{align} 
  which scatters in $ H_{ x } ^{ s }\times H_{ x } ^{ r } $ to free solutions as $ t\to \pm
  \infty  $.
  Moreover, the solution is unique within the resolution spaces constructed in Subsection
  \ref{resolution}.
\end{thm}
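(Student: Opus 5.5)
The plan is to reduce \eqref{kgs} to a first-order system and close a global-in-time contraction in adapted $U^2/V^2$ spaces. Set $\kg=(1-\Delta)^{1/2}$ and $n_\pm = n \pm i\kg^{-1}\partial_t n$, so that $(i\partial_t \mp \kg)n_\pm = \mp\kg^{-1}|u|^2$ (up to the sign in \eqref{kgs}), and $n=\mathrm{Re}\, n_+$. The resolution spaces are
\begin{itemize}
\item $X^s$ for $u$: the dyadic $\ell^2$-sum $\|u\|_{X^s}^2=\sum_N N^{2s}\|P_N u\|_{U^2_{\Delta}}^2$;
\item $Y^r$ for $n_\pm$: defined the same way with the Klein--Gordon phases $\pm\kg$.
\end{itemize}
Duality then reduces the nonlinear estimates to trilinear integrals against $V^2$ functions. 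The embeddings $U^2\subset V^2\subset U^p$ are taken as known. Since elements of $U^2$ have limits as $t\to\pm\infty$, scattering is automatic once the contraction closes, and the solution class \eqref{eq:solclass} follows from $U^2\subset C_tH^s$. Uniqueness in the resolution spaces is part of the fixed point.

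The two key estimates, for radial $u,v$ and real $n$, are
\[
\Big\|\int_0^t e^{i(t-t')\Delta}(un)\,dt'\Big\|_{X^s}\lesssim \|u\|_{X^s}\|n_\pm\|_{Y^r},
\qquad
\Big\|\int_0^t e^{\mp i(t-t')\kg}\kg^{-1}(u\bar v)\,dt'\Big\|_{Y^r}\lesssim \|u\|_{X^s}\|v\|_{X^s}.
\]
Both reduce to bounding
\[
\int\!\!\int P_{N_1}u\,\overline{P_{N_2}v}\,P_{L}n\,dx\,dt
\]
by $U^2$/$V^2$ norms with summable dyadic factors. I split into the following frequency interactions:
\begin{itemize}
\item \textbf{Low wave frequency} $L\lesssim 1$. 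Here the Klein--Gordon part behaves like a Schrödinger-type dispersive wave. Use $L^2_tL^6_x$ Strichartz estimates for both Schrödinger factors and radial improved Strichartz estimates for the meson factor.
\item \textbf{High-low} $N_1\sim L\gg N_2$ and its permutations. Use the bilinear restriction estimate between the Schrödinger paraboloid and the Klein--Gordon hyperboloid, which are transversal at separated frequencies. This gives an $L^2_{t,x}$ gain of the form $N_2/L^{1/2}$, transferred to $U^2$ and then interpolated down to $V^2$ with a logarithmic loss.
\item \textbf{High-high-to-low} $N_1\sim N_2\gg L$. Use radial Strichartz: for radial data, Schrödinger waves satisfy $L^2_tL^p_x$ bounds below the endpoint, and Klein--Gordon waves satisfy $L^2_tL^\infty_x$-type bounds with angular gain.
\item \textbf{Resonant region} $N_1\sim N_2\sim L^2$. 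This is where the Schrödinger and Klein--Gordon characteristic surfaces are tangent. I would handle it with a modulation decomposition: whichever factor has modulation $\gtrsim L^2$ gets the $V^2\subset L^2$ high-modulation bound, and the bilinear restriction estimate covers the rest.
\end{itemize}

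The main obstacle is closing the resonant and near-resonant interactions globally in time, at regularity $r$ close to $s-\tfrac12$ and near the upper end $r<s+2$. In this regime one cannot gain from a finite time interval, so the $V^2$ logarithmic losses must be absorbed by small power gains in $N/L$. I would first try to get these gains from the radial improvement: the radial Strichartz estimates allow exponents that are forbidden for general data, giving an extra $L^{-\delta}$. That $\delta$ exactly pays for the $\epsilon$ margin in $r>s-\tfrac12$, and for the losses from interpolating $U^2\to V^2$.

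With the multilinear bounds in hand, Banach's fixed point theorem on a small ball of $X^s\times Y^r$ yields the global solution, Lipschitz dependence, uniqueness, and scattering.
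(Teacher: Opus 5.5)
Your functional framework matches the paper's: $\ell^2$-summed $U^2$ resolution spaces, $U^2$--$V^2$ duality, a contraction, and scattering from $\ell^2U^2\subset V^2$. The frequency analysis, however, has a genuine gap at low frequencies. In your case ``$L\lesssim 1$'' you use only Strichartz/H\"older bounds, and when all three frequencies are small, no estimate that passes through absolute values can work.

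To see this, take radial free waves $N=e^{it\langle D\rangle}g$ and $u_j=e^{-it\Delta}f_j$, with $\hat g,\hat f_j\geq 0$ smooth radial bumps on $\{|\xi|\sim\lambda\}$, normalized in $L^2$, and let $\lambda\to0$. Since $0\leq\langle\xi\rangle-1\leq|\xi|^2/2$, each wave has modulus $\gtrsim\lambda^{3/2}$ on $\{|x|\lesssim\lambda^{-1},\ |t|\lesssim\lambda^{-2}\}$. Hence $\int|N||u_1||u_2|\,dx\,dt\gtrsim\lambda^{-1/2}$, while all $U^2$/$V^2$ norms are $O(1)$. The required bound is $O(1)$ with a lumped low-frequency block, or $\lambda^{r}$ with $r>-\frac12$ if you use dyadic weights. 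Any H\"older, Strichartz or bilinear-restriction argument controls the absolute integral, so it must violate this bound.

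What rescues the estimate is the Klein--Gordon mass. For frequencies below $2^{-3}$ one has $|\Phi_\pm|,|\Psi|\geq\frac12$ (Lemmas \ref{lemma:sch nonresonance}(i) and \ref{lemma: kg nonresonance}(i)), so one factor always carries modulation $\gtrsim 1$. The paper exploits this through $V^2\subset\dot X^{0,\frac12,\infty}$ in Case I of Propositions \ref{trilinearS} and \ref{trilinearKG}, together with the single $P_{<-10}$ block in the $Z$-norms. Your proposal never uses this nonresonance.

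The only place you invoke modulation is a ``resonant region $N_1\sim N_2\sim L^2$ where the surfaces are tangent,'' and this region is misidentified. Equality of group velocities, $2\xi=\eta/\langle\eta\rangle$, forces $|\xi|<\frac12$. The genuinely resonant high-frequency interactions are high--high--low ($N_1\sim N_2\gtrsim L$). The paper handles these by radial Strichartz, plus bilinear restriction when the Klein--Gordon frequency is $\lesssim 2^{10}$, not by modulation.

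Two further steps fail as written.
\begin{enumerate}
\item In the low-wave-frequency case, putting both Schr\"odinger factors in $L^2_tL^6_x$ forces the meson into $L^\infty_tL^{3/2}_x$, which no Strichartz estimate provides. Moreover, in the Schr\"odinger-nonlinearity estimate one Schr\"odinger factor is the $V^2$ dual function, and $V^2$ admits no $L^2_t$-based Strichartz bound. This is exactly why the paper perturbs off the endpoint pairs and pays the $\epsilon$ loss.
\item In the high--low case, a bilinear $L^2_{t,x}$ bound on one pair closes the trilinear integral only if the third factor lies in $L^2_{t,x}$. For $U^2$/$V^2$ functions this holds only at high modulation. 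So you again need the nonresonance $|\Phi_\pm|\gtrsim 2^{2k}$, $|\Psi|\gtrsim 2^{2k_1}$ of Lemmas \ref{lemma:sch nonresonance}(ii)--(iii) and \ref{lemma: kg nonresonance}(ii). This is how the paper treats Cases II.2 and III.1 of the Schr\"odinger estimate and Case III.1 of the Klein--Gordon estimate. The alternative is an $L^{8/5}_tL^{3/2}_x$ bilinear restriction estimate paired with an $L^{8/3}_tL^3_x$ Strichartz norm, as in the paper's transversal cases; that route needs the curvature and transversality hypotheses, which the paper only verifies for Klein--Gordon frequencies $\lesssim 2^{10}$.
\end{enumerate}
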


\subsection{Main difficulties and ideas of the proof}

We now discuss the difficulties in proving Theorem \ref{mainthm} and the main ideas of the proof. We begin by highlighting the features of the Klein-Gordon-Schrödinger system in contrast to the Zakharov system; we directly compare our approach to that of prior works on the Zakharov system under spherical symmetry \cite{GN,KK}.
For the sake of
exposition, we deliberately overlook some technical aspects in favor of a cleaner heuristic presentation.
We focus on the case $ s = 0 $.

Schematically, the main difficulty in proving global well-posedness for (\ref{kgs}) is to establish trilinear estimates of the form
\begin{align}\label{eq:toy_new}
  \left\lvert \int_{ \R^{ 1+3 }  } n u_{1} u_2 \dif x\dif t \right\rvert 
  &\lesssim 2^{ rk_+ } \lVert n \rVert_{ S_{ \kg }  }   \lVert u_1 \rVert_{ S_{\s} }   
    \lVert u_2 \rVert_{ S_{ \s }  },
\end{align}
where $k_+=\max\{k,0\}$, and
\begin{align*}
  n = e^{ it\langle \nabla \rangle  } P_{ k } g,
  \quad
  u_1 = e^{ -it\Delta } P_{ k_1 } f_1,
  \quad\text{and}\quad 
  u_2 = e^{ -it\Delta } P_{ k_2 } f_2
\end{align*} 
are free waves localized to frequencies $ 2^{ k }$ , $2^{ k_1 } $, and $ 2^{ k_2 }  $.
Part of the difficulty is in constructing solution spaces $ S_{ \s }  $ and $ S_{ \kg }  $
adapted to the Schrödinger and Klein-Gordon evolutions in which such estimates can be proved.

A natural first approach is to build $S_{\s}$ and $S_{\kg}$ based on the Strichartz estimates 
\begin{align*}
  \lVert e^{ -it\Delta } f \rVert  _{  L_{ t }^{ 2 } L_{ x }^{ 6 }   }   
  \lesssim \lVert f \rVert  _{ L_{ x } ^{ 2 }  }
  \quad\text{and}\quad 
  \lVert e^{ it\langle \nabla \rangle  } P_{ k } g \rVert  _{ L_{ t }^{ \infty } L_{ x }^{ 2 }   } 
  \lesssim \lVert g \rVert  _{ L^{ 2 } _{ x }  }.
\end{align*} 
However, Hölder's inequality only gives
\begin{align*}
  \lVert n u_1u_2 \rVert_{ L_{ t }^{ 1 } L_{ x }^{ \frac{6}{5}  }   }   
  &\lesssim 2^{ \frac{k}{2}  } \lVert n \rVert_{ S_{ \kg }  } \lVert u_1 \rVert_{ S_{ \s } } \lVert u_2 \rVert
  _{ S_{ \s }   }, 
\end{align*} 
which is not enough for (\ref{eq:toy_new}).
The restrictive Strichartz range is a manifestation of the fact that waves disperse more
slowly as one goes down in space dimension. 
This is a recurring difficulty in most nonlinear models in low dimensions.

Under radial symmetry, one gains access to a substantially larger range of Strichartz estimates; spherical symmetry prevents concentration along small angular regions and
rules out certain Knapp-type constructions.
In particular, ignoring endpoint issues, one has
\begin{align}\label{eq:strichartz-toy}
  2^{ \frac{k_{ 1 } }{4}  } \lVert e^{ -it\Delta } P_{ k_1 } f \rVert_{ L_{ t }^{ 2 } L_{ x
  }^{ 4 } }  
  \lesssim \lVert f \rVert  _{ L_{ x } ^{ 2 }  }
  \quad\text{and}\quad 
  2^{ -\frac{k}{4}   }
  \lVert e^{ it\lvert \nabla \rvert   } P_{ k } h \rVert  _{ L_{ t }^{ 2} L_{ x }^{ 4 }   } 
  \lesssim \lVert h \rVert  _{ L^{ 2 } _{ x }  }.
\end{align} 
In \cite{GN}, Guo and Nakanishi show that the increased radial Strichartz range is enough
to prove global well-posedness and scattering for radial solutions to the three-dimensional
Zakharov system in the energy space $ H^{ 1 } \times L^{ 2 } \times \dot H^{ -1 }  $.
Roughly speaking, their approach begins by separating nonlinear interactions into resonant and nonresonant
contributions.
The nonresonant components are addressed via a normal form transformation that replaces the
quadratic nonlinearity for a better-behaved cubic one.
The resonant interactions are then controlled by the \textit{heuristic} estimate
\begin{nalign}\label{eq:multizakharov}
  &\left\lvert \int_{ \R^{ 1+3 }  } 
    e^{ it\lvert \nabla \rvert  } P_{ k } h
    \cdot e^{ -it\Delta } P_{ k_1 } f_1
    \cdot e^{ -it\Delta } P_{ k_2 } f_2 \dif x\dif t \right\rvert \\
  &\hspace{15em}
    \lesssim 
    \lVert e^{  t\lvert \nabla \rvert } P_{ k } h \rVert_{ L_{ t }^{ 2 } L_{ x }^{ 4 }    }   
    \lVert e^{ -it\Delta } P_{ k_1 } f_1 \rVert_{ L_{ t }^{ 2 } L_{ x }^{ 4 } } 
    \lVert e^{ -it\Delta } P_{ k_2 } f_2 \rVert_{ L_{ t }^{ \infty } L_{ x }^{ 2 }  }\\
  &\hspace{15em} 
    \lesssim 2^{ \frac{ (k-k_1) }{ 4 }  } 
    \lVert e^{  it\lvert \nabla \rvert } P_{ k } h \rVert_{ S_{ \lvert D \rvert   } }   
    \lVert e^{ -it\Delta } P_{ k_1 } f_1 \rVert_{ S_{ \s }  } 
    \lVert e^{ -it\Delta } P_{ k_2 } f_2 \rVert_{ S_{ \s } }.
\end{nalign} 
In the regime where $ k_1 \succ k $, this corresponds to global well-posedness
with $ r = 0 $ and, ignoring issues of summability, is enough to close the argument in the energy space.

More recently, Kato and Kinoshita \cite{KK} obtained global well-posedness
and scattering for the Zakharov system for small radial data in the critical space $ L^{ 2 } \times H^{ -\frac{1}{2} } \times H^{
-\frac{3}{2}  }  $.
Compared to \cite{GN}, their resonance analysis takes advantage of the discrepancy in derivative gains between the Strichartz estimates for the
Schrödinger and the wave equations in (\ref{eq:strichartz-toy}), see Figure
\ref{fig:Strichartz} for the full Strichartz ranges.
%For instance, when $ k_1\succ k \geq 1 $, we can estimate
%\begin{align*}
%  &\left\lvert \int_{ \R^{ 1+3 }  } 
%    e^{ it\lvert \nabla \rvert  } P_{ k } h
%    \cdot e^{ -it\Delta } P_{ k_1 } f_1
%    \cdot e^{ -it\Delta } P_{ k_2 } f_2 \dif x\dif t \right\rvert \\
%  &\hspace{15em}
%    \lesssim \lVert e^{ -it\lvert \nabla \rvert } P_{ k } h \rVert_{ L_{ t }^{ \infty } L_{
%    x }^{ 2 } }   
%    \lVert e^{ -it\lvert \nabla \rvert } P_{ k_1 } f_1 \rVert_{ L_{ t }^{ 2 } L_{ x }^{ 4 } } 
%    \lVert e^{ -it\lvert \nabla \rvert } P_{ k_2 } f_2 \rVert_{ L_{ t }^{ 2 } L_{ x }^{ 4}}\\
%  &\hspace{15em} 
%    \lesssim 2^{ \frac{ (-k_1-k_2) }{ 4 }  } 
%    \lVert e^{ -it\lvert \nabla \rvert } P_{ k } h \rVert_{ S_{ \lvert D \rvert   } }   
%    \lVert e^{ -it\lvert \nabla \rvert } P_{ k_1 } f_1 \rVert_{ S_{ \s }  } 
%    \lVert e^{ -it\lvert \nabla \rvert } P_{ k_2 } f_2 \rVert_{ S_{ \s } }.
%\end{align*} 
In the nonresonant regime, they forego the normal form transformation in favor of the modulation spaces $ \dot X^{ s,b } $,
which allows finer control over nonresonant interactions by tracking the distance
between bilinear interactions and the characteristic surfaces $ \tau =  \lvert \xi \rvert ^2 $ and $
\tau =  \lvert \xi \rvert  $ in Fourier space.
The fine $\dot X^{s,b}$ control also requires a delicate case-by-case resonance analysis.

The Klein-Gordon-Schrödinger system differs from the Zakharov system in a key aspect. 
Unlike the Schrödinger and wave propagators, which maintain the same qualitative behavior across different scales, the Klein-Gordon propagator behaves differently at low and high frequencies.
More precisely, under spherical symmetry,
\begin{align*}
  2^{ -\frac{k}{4} } \lVert e^{ it\langle \nabla \rangle  } P_{ k } g \rVert  _{ L_{ t }^{ 2
    } L_{ x }^{ 4 }  }
  &\lesssim \lVert g \rVert  _{ L^{ 2 }_{ x }   },\,
    \text{ if } k \geq 0,
\end{align*}
whereas
\begin{align*}
    2^{ \frac{k}{4} } \lVert e^{ it\langle \nabla \rangle }P_{ k } g   \rVert  _{ L_{ t }^{
    2 } L_{ x }^{ 4 }  }
  &\lesssim \lVert g \rVert  _{ L^{ 2 }_{ x }   },
    \, \text{ if } k < 0.
\end{align*} 
This matches the geometry of the characteristic surface $\tau = \langle \xi \rangle $, which is parabolic near the origin and asymptotically conic at high frequency.
In particular, at low frequency, the
Klein–Gordon evolution is of Schrödinger type, rather than wave type.
As a consequence, when $ k < 0 $, the analogue of (\ref{eq:multizakharov}) becomes
\begin{align}\label{eq:failure}
  &\left\lvert \int_{ \R^{ 1+3 }  } 
    e^{ it\langle \nabla \rangle   } P_{ k } g
    \cdot e^{ -it\Delta } P_{ k_1 } f_1
    \cdot e^{ -it\Delta } P_{ k_2 } f_2 \dif x\dif t \right\rvert \\
  &\hspace{14.5em} 
    \lesssim 2^{ \frac{ (-k-k_1) }{ 4 }  } 
    \lVert e^{  it\langle \nabla \rangle  } P_{ k } g \rVert_{ S_{ \lvert D \rvert   } }   
    \lVert e^{ -it\Delta } P_{ k_1 } f_1 \rVert_{ S_{ \s }  } 
    \lVert e^{ -it\Delta } P_{ k_2 } f_2 \rVert_{ S_{ \s } }\nonumber,
\end{align} 
which is unbounded as $ k\to -\infty $.
This low frequency behavior creates the main obstruction in our analysis. In the regime $k_1 \succ k$
and $k < 0$, the interaction is resonant, and the estimate above is far from sufficient.
Moreover, unlike some other quadratic Klein-Gordon models in the literature, such as \cite{BH-dkg, BB}, we lack a null structure to exploit in the near-resonant regions. 
The presence of resonances also prevents the use of normal form transformations or $\dot X^{s,b}$ spaces, so the approaches in \cite{GN,KK} do not apply in this regime.
To control these interactions, we use bilinear restriction estimates instead. 
In the problematic frequency regime, the Fourier
supports of $ e^{ it\langle \nabla \rangle  } P_{ k } f $ and $ e^{ -it\Delta } P_{ k_1 }
g_1 $ are \emph{transversal}.
Heuristically, waves traveling in transversal directions interact for shorter times
which lead to bilinear gains that are not visible at the linear level.
For instance, one can improve (\ref{eq:failure}) by estimating
\begin{align*}
  &\left\lvert \int_{ \R^{ 1+3 }  } 
    e^{ it\langle \nabla \rangle   } P_{ k } g
    \cdot e^{ -it\Delta } P_{ k_1 } f_1
    \cdot e^{ -it\Delta } P_{ k_2 } f_2 \dif x\dif t \right\rvert \\
  &\hspace{14em} 
    \lesssim 
    \lVert e^{  it\langle \nabla \rangle  } P_{ k } g\cdot e^{ -it\Delta } P_{ k_1 } f_1 \rVert_{
    L_{ t }^{ \frac{8}{5} } L_{ x }^{ \frac{3}{2}  }} \rVert
    \lVert e^{ -it\Delta } P_{ k_2 } f_2 \rVert_{ L_{ t }^{ \frac{8}{3}  } L_{ x }^{ 3 } }\\
  &\hspace{14em} 
    \lesssim 2^{ \frac{k}{12} - \frac{7}{12}k_1  } 
    \lVert e^{  it\langle \nabla \rangle  } P_{ k } g \rVert_{ L_{ t }^{ \infty } L_{ x }^{2} }   
    \lVert e^{ -it\Delta } P_{ k_1 } f_1 \rVert_{ L_{ t }^{ \infty } L_{ x }^{2} }   
    \lVert e^{ -it\Delta } P_{ k_2 } f_2 \rVert_{ S_{ \s } },
\end{align*} 
which is bounded in the region  $ k_1\sim k_2 \geq 1$, $ k_1\succ k $, and compensates for the failure in the linear Strichartz argument. Note that the exponents $L_t^{\frac{8}{5}}L_x^{\frac{3}{2}}$ for the bilinear estimate, while not optimal, suffice for our nonlinear analysis, see Theorem \ref{thm:bilinearrange} or \cite[Corollary 1.6]{Ca}.

There is a rich history of bilinear restriction estimates in harmonic
analysis  -- see for example \cite{Wolff, Tao1, Tao2, LeeVargas, Bejenaru} and the
references therein. 
Bilinear restriction ideas have also appeared
earlier in nonlinear dispersive equations, especially in $L^2$-based
nonresonant settings, see for instance \cite{KS,BHHT,BH-convolutions},
but the use of bilinear restriction estimates to study resonant interactions is more recent, see \cite{Ca,CH}.
While our analysis so far has been formulated in terms of free waves, the full nonlinear analysis requires a mechanism to transfer bilinear estimates from free waves to higher order nonlinear iterates, which has historically been a major obstacle in applications to PDEs.
In this paper, we rely on the framework developed by Candy \cite{Ca}, which builds on the $U^2$ and $V^2$ spaces introduced by Tataru in an unpublished manuscript and then further developed by Koch and Tataru \cite{KT-u2-1,
KT-u2-2}. 
This point of view has already proved useful in important nonlinear dispersive PDEs, for instance in the works \cite{CH,CH-wm}; however its implementation for systems with mixed phases has been more limited, see \cite{CHK}.

Intuitively, $U^2_{\Phi}$ is an atomic space built out of free solutions, and therefore
inherits both the linear estimates and the bilinear restriction bounds available at the free level. For the purposes of this work, we combine radial Strichartz estimates, modulation bounds in $\dot X^{s,b}$ spaces, and bilinear
restriction estimates to construct a contraction in $U^2_{\Phi}$-based resolution spaces. The companion space $V^2_{\Phi}$ appears naturally through the duality argument for the Duhamel terms and creates some
additional technical difficulties, especially near endpoint exponents.
The missing regularity endpoint $(s,r) = (0,-1/2)$ in Theorem \ref{mainthm} is directly related to the failure of certain endpoint Strichartz estimates and to the space $V^2$, as discussed in Subsection 4.3.

The proof of the main trilinear estimates is ultimately organized as a case analysis according to the relative sizes of $2^k$, $2^{k_1}$, and $2^{k_2}$, see Figures \ref{fig:schrodinger} and \ref{fig:kleingordon} for a visual breakdown of the cases.
Compared to \cite{KK}, the bilinear restriction estimates simplify and shorten the resonance analysis considerably. 
Once the frequency localized trilinear estimates are established, the global iteration scheme and the scattering statement follow from a standard contraction argument in $U^2$--$V^2$.

\subsection{Outline of the paper}
In Section 2 we introduce some of the basic notation and rewrite the original system (\ref{kgs}) in the equivalent first-order form (\ref{eq:modifiedkgs}).
The resonance analysis for the Schrödinger and Klein-Gordon nonlinearities is done in Subsection 2.2.
In Section 3 we describe the linear theory for the propagators $e^{-i\Delta}$ and $e^{it\langle \nabla\rangle}$ under spherical symmetry.
We also define the spaces $U^2$ and $V^2$, and construct the resolution spaces that will be used in the contraction.
Finally, in Section 4, we prove the multilinear estimates required for the iteration argument, organized into several steps. In Subsection 4.1 we establish the bilinear restriction estimates in the relevant frequency regions and check the necessary transversality conditions. 
Subsection 4.2 contains a high-level overview and a visual breakdown of the case analysis needed in the proof of the trilinear estimates, whereas Subsections 4.3 and 4.4 contain the main nonlinear analysis and comprise the technical body of the paper.
In Subsection 4.5 we provide a short proof of the main theorem (\ref{mainthm}) assuming the frequency localized bounds.

\section{Preliminaries}\label{section 2}
\subsection{Notation}

We say $A\lesssim B$ (analogously $A\gtrsim B$) if there is a constant $C>0$ such that
$A\leq C B$ and we say $A\sim B$ if both $A\lesssim B$ and $B\lesssim A$. For integers $j,k$
we say $j\prec k$ (analogously $j\succ k$) if $j\leq k-10$.
Given integers $ k,k_1,k_2 $ we denote the median of $ k,k_1,k_2 $ by $ \med\left\{ k,k_1,k_2 \right\} $.
In addition, given $ \xi\in\R^{ n } $, we define the Japanese bracket of $ \xi $
by $ \langle \xi \rangle \defeq \sqrt{ 1+\lvert \xi \rvert ^2 }  $.

Let $\rho^0\in C^\infty_c(-2,2)$ be a fixed smooth, even, cutoff
satisfying $\rho^0(s)=1$ for $|s|\leq 1$ and $0\leq \rho\leq 1$. For
$k \in \Z$ we define $\rho_k:\R \ \mbox{or} \ \R^2 \to \R$,
$\rho_k(y)\defeq\rho^0(2^{-k}|y|)-\rho^0(2^{-k+1}|y|)$.
Let $\tilde{\rho}_k=\rho_{k-1}+\rho_k+\rho_{k+1}$. For $k \geq 1$, let $P_k$ be the
Fourier multiplication operators with respect to $\rho_k$, and
$P_0=I-\sum_{k \geq 1}P_k$.  For $j \in \Z$ we define
\begin{align*}
    \mathcal{F}[Q^{\Delta}_{j}f](\tau,\xi)\defeq \rho_j(\tau-\lvert \xi\rvert^2)\mathcal{F}f(\tau,\xi),\\
    \mathcal{F}[Q^{\pm \langle D\rangle }_{j}f](\tau,\xi)\defeq \rho_j(\tau\pm \langle \xi\rangle)\mathcal{F}f(\tau,\xi).
\end{align*}
Similarly, we define $\tilde P_k, \tilde Q_j^{\Delta}, \tilde Q_j^{\pm\langle D\rangle}$, where $D = -i\nabla$. 
We also define $P_{\leq k} \defeq \sum_{k'\leq k} P_{k'}$, $\tilde P_{\leq k}, \tilde
Q_{\leq j}^{\Delta}$, and $ \tilde Q_{\leq j}^{\pm\langle D\rangle}$ analogously. 

%Let $ s\in\R $ and $ 1\leq p,q\leq \infty$, we define the Besov space $ B_{ p,q }^{ s }  $
%as the completion of the space of smooth and compactly supported test functions with norm
%\begin{align*}
%  \lVert u \rVert  _{ B_{ p,q }^{ s }  } 
%  &\defeq \lVert P_{ \leq 0 } u \rVert  _{ L^{ p }  }  + \left( \sum_{ k\geq 1 } 2^{ qks }
%  \lVert P_{ k } u \rVert_{ L^{ p } }^{ q }    \right) ^{ \frac{1}{q}  } 
%\end{align*} 
%with the usual modification when $ q = \infty  $.
%One can directly check from the definition above that  
%\begin{align*}
%  B_{ p,1 }^{ s } \subset  W^{ s,p } \subset  B_{ p,\infty  }^{ s },
%\end{align*} 
%where $ W^{ s,p }  $ denotes the standard $ L^{ p }  $-based Sobolev space at regularity $ s
%$.
%When $ p = 2 $, we write $ H^{ s } \defeq W^{ s,2 } $ and the embedding reads
%\begin{align*}
%  B_{ 2,1 }^{ s } \subset  H^{ s } = B_{ 2,2 }^{ s }  \subset  B_{ 2,\infty  }^{ s }.
%\end{align*} 

\subsection{Setup of the system and resonance analysis}

By letting $N \defeq (1-i\langle D\rangle ^{-1}\partial_{t})n$, we can rewrite (\ref{kgs}) as the first order system
\begin{nalign}\label{eq:modifiedkgs}
  &(i\partial_{t}+\Delta)u =  \frac{1}{2} u(N+\overline N)\\
   &(i\partial_{t}+\langle D\rangle)N = \pm\langle D\rangle ^{-1}\lvert u\rvert ^2.
\end{nalign}
We aim to provide a global theory for (\ref{eq:modifiedkgs})
with initial data $(u,N)|_{t=0} = (u_0, N_0) \in H^s \times H^r$ with $s\geq 0$ and $(s-\frac 12) < r < (s+2)$.
This easily translates back to the global theory for (\ref{kgs}) with initial data
$(u_0,n_0,n_1)\in H^s \times H^r \times H^{r-1}$.
Furthermore, for small data, the sign for the Klein-Gordon nonlinearity does not affect the argument, so we fix it to $+$.

Define the resonance functions corresponding to the Schrödinger and Klein-Gordon nonlinearities $uN$, $u\overline N$, and $\langle D\rangle ^{-1} |u|^2$, respectively:
\begin{align}
    \Phi_{+} (\xi,\eta) &= \lvert \xi\rvert^2 - \langle\eta\rangle - \lvert
    \xi-\eta\rvert^2,\\
    \Phi_{-}(\xi,\eta) &= \lvert \xi\rvert^2 + \langle\eta\rangle -\lvert
    \xi-\eta\rvert^2,\\
    \Psi(\xi,\eta) &= \langle\xi\rangle -\lvert \xi-\eta\rvert^2 + \lvert \eta\rvert^2.
\end{align}

We make note of some completely nonresonant interactions in the following lemmas whose proofs follow immediately from direct computations:
\begin{lemma}[Schrödinger nonresonances]\label{lemma:sch nonresonance}
Let $\abs{\xi}\sim 2^{k_1}$, $\abs{\eta}\sim 2^{k}$, and $\abs{\xi-\eta}\sim 2^{k_2}$. We then have the following estimates for $\Phi_{\pm}$:
\begin{enumerate}[label=\upshape(\roman*)]
    \item If $k_1, k\leq -3$ then $\abs{\Phi_{\pm}(\xi,\eta)}\geq\frac{1}{2}$,
    \item If $k\succ k_1$ and $k>7$ then $\abs{\Phi_{\pm}(\xi,\eta)}\gtrsim 2^{2k}$.
    \item If $k\sim k_1\succ k_2$ then $\abs{\Phi_{\pm}(\xi,\eta)}\gtrsim 2^{2k}$
\end{enumerate}
\end{lemma}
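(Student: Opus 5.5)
The argument is a direct computation. I would first rewrite the resonance functions as
\begin{align*}
  \Phi_{\pm}(\xi,\eta) = \lvert \xi\rvert^2-\lvert \xi-\eta\rvert^2 \mp \langle \eta\rangle = 2\xi\cdot\eta - \lvert \eta\rvert^2 \mp \langle\eta\rangle .
\end{align*}
Then, in each regime, I would find one dominant term and show that the remaining terms are at most a fixed fraction of it. The frequency localizations are used in the quantitative form $2^{j-1}\leq \lvert\cdot\rvert\leq 2^{j+1}$, which is the support of $\rho_j$, together with $j\prec k \Leftrightarrow j\leq k-10$. In case (i) the frequencies are at most $2^{-2}$, which only makes sense for the lowest cutoff, so there I would use only the upper bound $2^{-2}$.

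For (i), I take $\lvert\xi\rvert,\lvert\eta\rvert\leq 2^{-2}$. Then $\lvert 2\xi\cdot\eta-\lvert\eta\rvert^2\rvert\leq 2\cdot 2^{-4}+2^{-4}=\tfrac{3}{16}$. Since $\langle\eta\rangle\geq 1$, this gives $\lvert\Phi_\pm\rvert\geq 1-\tfrac{3}{16}\geq \tfrac12$ for both signs.

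For (ii), the dominant term is $\lvert\eta\rvert^2\geq 2^{2k-2}$. Since $k_1\leq k-10$, I have $\lvert \xi\rvert\leq 2^{k-9}$, so $\lvert 2\xi\cdot\eta\rvert\leq 2^{2k-7}$. I would also bound $\langle\eta\rangle\leq 1+2^{k+1}$, which is at most $2^{2k-6}$ once $k>7$. Combining these gives $\lvert\Phi_\pm\rvert\geq 2^{2k-2}-2^{2k-7}-2^{2k-6}\gtrsim 2^{2k}$.

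For (iii), I would use the first form of $\Phi_\pm$ instead, with dominant term $\lvert\xi\rvert^2\geq 2^{2k_1-2}$. Since $k_2\leq k_1-10$, I get $\lvert\xi-\eta\rvert^2\leq 2^{2k_1-18}$. Also $\langle\eta\rangle\lesssim 1+2^{k}\sim 1+2^{k_1}$, because $k\sim k_1$.

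The only delicate point in the whole lemma is controlling the Klein--Gordon phase $\langle\eta\rangle$ in (iii) by $2^{2k_1}$. This requires $k_1$ to be large. For small frequencies the bound genuinely fails: with $\eta=\xi$ and $\lvert \xi\rvert\approx 1.2$, the quantity $\Phi_+$ is close to $0$. I would handle this by using the inhomogeneous decomposition, in which $k_2\geq 0$ and hence $k_1\geq 10$. Under that constraint I would check explicitly that $\langle\eta\rangle\leq 1+2^{k+1}$ is at most a small multiple of $2^{2k_1}$. If needed, I would read ``$k\sim k_1$'' as $\lvert k-k_1\rvert\leq 2$, so that $\langle\eta\rangle\leq 2^{k_1+4}\leq 2^{2k_1-5}$. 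This yields $\lvert \Phi_\pm\rvert\geq 2^{2k_1-2}-2^{2k_1-18}-2^{2k_1-5}\gtrsim 2^{2k}$.
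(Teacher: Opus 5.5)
Your computations are correct, and they are the ``direct computation'' the paper invokes without giving details. Parts (i) and (ii) go through exactly as you wrote them.

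For (iii) you have found a genuine defect in the statement itself, not in your argument, and the paper's one-line justification overlooks it. The quantity $\Phi_+(\xi,\xi)=\lvert\xi\rvert^2-\langle\xi\rangle$ vanishes at $\lvert\xi\rvert^2=\frac{1+\sqrt5}{2}$, i.e. $\lvert\xi\rvert\approx 1.27$, which lies in the annuli for $k_1=0$ and $k_1=1$. Moreover $\lvert\Phi_+(\xi,\eta)-\Phi_+(\xi,\xi)\rvert\leq\lvert\xi-\eta\rvert+\lvert\xi-\eta\rvert^2$. Hence no bound $\lvert\Phi_+\rvert\gtrsim 2^{2k}$ can hold uniformly over $k_2\prec k_1$ when $k_1\in\{0,1\}$. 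For $\Phi_-$ the bound does hold at every scale, since there $\langle\eta\rangle$ has the same sign as $\lvert\xi\rvert^2$. What your computation actually proves is (iii) under an explicit hypothesis such as $k_1\geq 10$, and that is how the lemma should be stated.

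Two remarks on your treatment of (iii).

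First, you do not need to read $k\sim k_1$ as $\lvert k-k_1\rvert\leq 2$. The hypothesis $k_2\prec k_1$ already gives $\lvert\eta\rvert\leq\lvert\xi\rvert+\lvert\xi-\eta\rvert\leq 2^{k_1+2}$.

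Second, and more importantly, justifying the extra hypothesis through an inhomogeneous decomposition with $k_2\geq 0$ does not match how the paper is set up:
\begin{itemize}
\item the $Z$-norms use blocks $P_k$ down to $k=-10$ together with $P_{<-10}$;
\item part (i) itself only makes sense for $k\leq -3$;
\item Case II.2 of Proposition \ref{trilinearS} invokes (iii) assuming only $\max\{k,k_1,k_2\}\gtrsim -3$.
\end{itemize}
So the resonant configuration you identified is actually reached in the paper's argument. Your computation alone does not rescue that case. In the bounded-frequency regime the nonresonance argument has to be replaced by something else. One option is the radial Strichartz bound
$\lVert N\rVert_{L^2L^4}\lVert u_1\rVert_{L^2L^4}\lVert u_2\rVert_{L^\infty L^2}\lesssim 2^{-(k+k_1)/4}\lVert N\rVert_{U^2_{\kg}}\lVert u_1\rVert_{U^2_{\s}}\lVert u_2\rVert_{V^2_{\s}}$.
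This is admissible because the Klein--Gordon factor at bounded frequency enjoys the Schrödinger Strichartz range.
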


\begin{lemma}[Klein-Gordon nonresonances]\label{lemma: kg nonresonance}
Let $\abs{\xi}\sim 2^{k_1}$, $\abs{\eta}\sim 2^{k}$, and $\abs{\xi-\eta}\sim 2^{k_2}$. We then have the following estimates for $\Psi$:
\begin{enumerate}[label=\upshape(\roman*)]
    \item If $k_1,k_2\leq -3$ then $\abs{\Psi(\xi,\eta)}\geq\frac{1}{2}$,
    \item If $k_1\succ k_2$ and $k_1>10$ then $\abs{\Psi(\xi,\eta)}\gtrsim 2^{2k_1}$.
\end{enumerate}
Furthermore, the above estimates remain true if we swap the roles of $k_1$ and $k_2$.
\end{lemma}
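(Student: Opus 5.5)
The plan is to expand $\Psi$ directly and then compare sizes of the terms. The starting point is the identity
\[
  \Psi(\xi,\eta)=\langle\xi\rangle+|\eta|^2-|\xi-\eta|^2 = \langle\xi\rangle+2\xi\cdot\eta-|\xi|^2 .
\]
In the annuli, $|\xi|\sim 2^{k_1}$ and $|\xi-\eta|\sim 2^{k_2}$ mean that $2^{k_j-1}\le|\cdot|\le 2^{k_j+1}$, because $\rho_k$ is supported where $2^{k-1}\le|y|\le 2^{k+1}$.

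\textbf{Part (i).} Assume $k_1,k_2\le -3$. Then $|\xi|\le 1/4$ and $|\xi-\eta|\le 1/4$. Since $\langle\xi\rangle\ge 1$ and $|\eta|^2\ge 0$, we get
\[
  \Psi\ge 1-|\xi-\eta|^2\ge 1-\tfrac1{16}\ge\tfrac12 .
\]
The hypothesis is symmetric in $k_1,k_2$, so the swapped version of (i) is literally the same statement and needs no further argument.

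\textbf{Part (ii).} Assume $k_1\succ k_2$, so $k_2\le k_1-10$, and $k_1>10$. Then $|\xi-\eta|\le 2^{k_1-9}$. By the triangle inequality,
\[
  |\eta|\ge|\xi|-|\xi-\eta|\ge 2^{k_1-1}-2^{k_1-9}\ge 2^{k_1-2}.
\]
Hence $|\eta|^2-|\xi-\eta|^2\ge 2^{2k_1-4}-2^{2k_1-18}\gtrsim 2^{2k_1}$. Adding $\langle\xi\rangle>0$ only increases the left-hand side, which gives $\Psi\gtrsim 2^{2k_1}$. The condition $k_1>10$ is not actually needed here; it only keeps the lemma uniform with the Schrödinger case.

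\textbf{The swap in (ii).} This is the step I expect to be the obstacle. Read literally, the swapped statement is: if $k_2\succ k_1$ and $k_2>10$, then $|\Psi|\gtrsim 2^{2k_2}$.

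The first step I would try is the same triangle-inequality argument. It gives $|\eta|\sim 2^{k_2}$. But the quadratic terms then cancel to leading order, since $|\eta|^2-|\xi-\eta|^2=2\xi\cdot\eta-|\xi|^2$. So the expansion yields only the upper bound $|\Psi|\lesssim 1+2^{k_1+k_2}$, and no lower bound.

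To test whether a lower bound can hold at all, I would look for a counterexample. Take $|\xi|$ with $2^{k_1}\sim 2^{-k_2}$ and choose $\eta$ with $\xi\cdot\eta=-\tfrac12\langle\xi\rangle+\tfrac12|\xi|^2$. Then $\Psi=0$, while $|\xi|\ll|\xi-\eta|$ still holds. So the literal reading cannot be proved.

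The natural repair is to read ``swapping the roles of $k_1$ and $k_2$'' as exchanging the two Schrödinger factors in $\langle D\rangle^{-1}|u|^2=\langle D\rangle^{-1}(u\bar u)$. Under that reading the hypothesis is placed on the other Schrödinger frequency, the phase becomes $\langle\xi\rangle-|\eta|^2+|\xi-\eta|^2$, and the proof of (ii) above applies verbatim with the squared terms interchanged.

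I would therefore write (i) and (ii) in full as above. For the swap clause, I would prove the version obtained by exchanging the two Schrödinger inputs, and I would check against the later trilinear case analysis that this is the only form in which the clause is invoked.
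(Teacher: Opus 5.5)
Your proofs of (i) and of the unswapped (ii) are correct. They are the same kind of direct computation the paper has in mind; the paper says only that the lemma follows from direct computation. Your counterexample is also valid: with the labeling as printed, $k_1$ sits on the Klein--Gordon variable $\xi$ of $\Psi$, and the literal swapped form of (ii) is false. That labeling was evidently copied from the Schr\"odinger nonresonance lemma. In Proposition~\ref{trilinearKG} the lemma is used with $k$ the Klein--Gordon frequency and $k_1,k_2$ the two Schr\"odinger frequencies, i.e.\ $|\xi|\sim 2^{k}$, $|\eta|\sim 2^{k_1}$, $|\xi-\eta|\sim 2^{k_2}$.

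The gap is your repair. You put the low frequency on the other Schr\"odinger factor \emph{and} replace the phase by $\langle\xi\rangle-|\eta|^2+|\xi-\eta|^2$. That is just the substitution $\eta\mapsto\xi-\eta$ in $\Psi$: the low factor still enters with the sign opposite to $\langle\xi\rangle$, so you have re-proved (ii) and nothing new. If instead you keep the low frequency on $\xi-\eta$ and use the new phase, the ``verbatim'' proof fails. Then $\langle\xi\rangle$ has the opposite sign to the dominant quadratic term and cannot simply be dropped.

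The swap clause must cover the configuration where the \emph{high} Schr\"odinger factor enters $\Psi$ with sign opposite to $\langle\xi\rangle$. Case III.1 of Proposition~\ref{trilinearKG} genuinely needs it, since $|u|^2$ contains both orderings of high and low factor. Equivalently, the reduction to $k_1\ge k_2$ there trades $N$ for $\overline N$. In the corrected labeling, take $k_2\succ k_1$ and $k_2>10$. Then $|\xi-\eta|\ge 2^{k_2-1}$, $|\eta|\le 2^{k_1+1}\le 2^{k_2-9}$, and $\langle\xi\rangle\le 1+|\xi|\le 2^{k_2+3}$, so
\[
  -\Psi(\xi,\eta)=|\xi-\eta|^2-|\eta|^2-\langle\xi\rangle\ge 2^{2k_2-2}-2^{2k_2-18}-2^{k_2+3}\gtrsim 2^{2k_2}.
\]
This is exactly where the largeness hypothesis is used: it lets the quadratic term beat $\langle\xi\rangle\sim 2^{k_2}$. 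Your remark that $k_1>10$ is superfluous is the symptom of having missed this case. With the corrected labeling, your proofs of (i) and of the unswapped (ii) go through essentially unchanged, so adding this estimate completes the lemma.
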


\section{Resolution spaces}\label{section 3}
In this section, we develop the linear theory for the PDEs
\begin{align*}
  (i\partial_{ t } + \Delta) u &= f, \quad u(0) = u_0,\\
  (i\partial_{ t } - \langle D \rangle ) N &= g, \quad N(0) = N_0,
\end{align*} 
which are the basic models for (\ref{eq:modifiedkgs}), and construct the resolution spaces
we will use in our contraction argument.

\subsection{Linear theory}
We start by recalling the frequency localized Strichartz estimates for the homogeneous
Schrödinger and Klein-Gordon propagators under radial symmetry.
Since the Schrödinger evolution is scale-invariant, the Schrödinger Strichartz estimates
behave the same in all scales.
The Klein-Gordon evolution, on the other hand, behaves as Schrödinger in low frequency
regimes and as wave in high frequency. 
These behaviors are quantified in the Strichartz estimates below.
\begin{prop}[Radial Strichartz estimates for Schrödinger and Klein-Gordon]\label{linearstructure}
  Let $(p_{ S } ,q_{ S } )$ and $ (p_{ W } ,
  q_{ W } )$ be radially admissible Schrödinger and wave Strichartz pairs,
  i.e.,
  \begin{nalign}\label{eq:ranges}
    &2\leq p_{ S }, q_{ S } \leq \infty,\quad 
    \frac{2}{p_{ S } } + \frac{5}{q_{ S } }\leq \frac{5}{2},\quad
    (p_{ S } ,q_{ S } )\neq \left(2,\frac{10}{3}\right),\\
    &2\leq p_{ W }, q_{ W } \leq \infty,\quad 
    \frac{1}{p_{ W } } + \frac{2}{q_{ W } }\leq 1,\quad
    (p_{ W } ,q_{ W } )\neq \left(2,4\right),
  \end{nalign} 
  and let $ f,g\in L_{ x } ^{ 2 }  $ be radial functions.
  Then,
  \begin{enumerate}[label = \upshape(\roman*)]
    \item For any $k \in \Z$,
      \begin{align}\label{eq:StrichartzS}
        2^{ k\sigma_{ S }} 
        \lVert P_{ k } e^{ -it\Delta }f  \rVert _{ L_{ t }^{ p_{ S }  } L_{ x }^{ q_{ S }  }  } 
        &\lesssim \lVert f \rVert  _{ L_{ x }^{ 2 }  },
        \qquad
        \sigma_{ S }(p,q) \defeq \frac{2}{p} + \frac{3}{q} - \frac{3}{2}.
      \end{align} 

    \item Moreover, for any $ k\geq 0 $,
      \begin{align}\label{eq:StrichartzW}
        2^{ k\sigma_{ W }} 
        \lVert P_{ k } e^{ it\langle D \rangle   }g  \rVert _{ L_{ t }^{ p_{ W }  } L_{ x }^{ q_{ W }  }  } 
        &\lesssim \lVert g \rVert  _{ L_{ x }^{ 2 }  },
        \qquad
        \sigma_{ W }(p,q) \defeq \frac{1}{p} + \frac{3}{q} - \frac{3}{2},
      \end{align} 
      and for  any fixed frequency cutoff $ c $, 
      \begin{align*}
        2^{ k\sigma_{ S }} 
        \lVert P_{ k } e^{ it\langle D \rangle   }g  \rVert _{ L_{ t }^{ p_{ S }  } L_{ x }^{ q_{ S }  }  } 
        &\lesssim \lVert g \rVert  _{ L_{ x }^{ 2 }  }
      \end{align*} 
      uniformly in $ k\leq c $.
  \end{enumerate}
  Furthermore, the admissible ranges above are sharp.
\end{prop}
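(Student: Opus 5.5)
The plan is to reduce everything, by scaling and a Littlewood–Paley reduction, to a single unit-frequency estimate for each propagator, and to obtain that estimate from radial dispersive decay via the $TT^*$ method. First, for the Schrödinger part (i), parabolic scaling $x\mapsto 2^{k}x$, $t\mapsto 2^{2k}t$ maps $P_k e^{-it\Delta}$ to $P_0$-type localization at unit frequency. Since $\lVert f(2^k\cdot,2^{2k}\cdot)\rVert_{L^p_tL^q_x}=2^{-k(\frac2p+\frac3q)}\lVert f\rVert_{L^p_tL^q_x}$ and the $L^2$ normalization contributes $2^{3k/2}$, the factor $2^{k\sigma_S}$ comes out exactly. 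It therefore suffices to prove the case $k=0$ with $f$ radial and $\hat f$ supported in $|\xi|\sim1$.

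For that case I would use the radial improvement of the dispersive estimate. Writing the kernel of $P_0e^{-it\Delta}$ restricted to radial functions via the Bessel representation $\widehat{f}(\rho)\,\frac{\sin(\rho r)}{\rho r}$, stationary phase in $\rho$ gives decay for $r\gtrsim1$ in terms of $r^{-1}$ together with the one-dimensional $|t|^{-1/2}$. Combined with the standard $|t|^{-3/2}$ bound, this yields the weighted/radial Strichartz estimates. Concretely, I would invoke the known results (Guo–Wang, Sterbenz, Cho–Lee). In these, the range $\frac{2}{p}+\frac{5}{q}<\frac52$ together with $\frac2p+\frac5q=\frac52$ away from the endpoint arises from the $TT^*$ argument. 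There one interpolates the $L^2$ bound with the frequency-localized radial decay $\lVert P_0e^{-it\Delta}f\rVert_{L^\infty}\lesssim\langle t\rangle^{-1}$ in $L^1$-type radial Lorentz norms (effectively, the "dimension $n-\frac12$" or $5/2$ heuristic). The endpoint $(2,\frac{10}{3})$ is excluded, consistent with failure of Keel–Tao at the critical exponent for the effective dimension.

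For (ii), when $k\ge0$ the symbol $\langle\xi\rangle$ on $|\xi|\sim2^k$ has one nonvanishing principal curvature direction (the radial one) of size $2^{-k}$. I would rescale space by $2^k$, keeping time so that the phase becomes $t\langle 2^k\xi\rangle\approx 2^kt|\xi|$ plus a smooth correction. The wave-type radial estimate (Sterbenz; Fang–Wang) then applies with constants uniform in $k$, because the correction $\langle 2^k\xi\rangle-2^k|\xi|=O(2^{-k})$ has bounded derivatives on the annulus and the stationary-phase constants in the angular direction depend only on $|\xi|\sim1$. This gives the $\sigma_W$ scaling and the radial wave range $\frac1p+\frac2q<1$. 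For $k\le c$, the phase $\langle\xi\rangle=1+\frac{|\xi|^2}{2}+O(|\xi|^4)$ is a uniformly nondegenerate perturbation of Schrödinger on the annulus. After parabolic rescaling to unit frequency, the Hessian is uniformly elliptic, so the same radial dispersive bounds, and hence the Schrödinger range, hold uniformly in $k\le c$. The unimodular factor $e^{it}$ is harmless.

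Sharpness I would treat by counterexamples: Knapp-type radial shells concentrated near a sphere of radius $\sim|t|$ (the radially focusing wave) saturate the lines $\frac2p+\frac5q=\frac52$ and $\frac1p+\frac2q=1$, scaling fixes $\sigma$, and the endpoints are excluded by the known failure results. I would cite these rather than reprove them.

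The main obstacle is the uniformity in $k$ for the Klein–Gordon estimates, especially at intermediate frequencies $k\sim0$ where neither model is exact. I would handle this by verifying that the radial stationary-phase constants depend only on $C^N$ bounds of the rescaled phase and on a lower bound of its radial second derivative on the annulus, both of which are uniform in each regime.
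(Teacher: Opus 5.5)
The paper gives no proof of this proposition; it only cites \cite{GN, GNW}. Your overall route (scale to unit frequency, then invoke the radial Strichartz literature) is therefore the paper's, and your scaling bookkeeping for $\sigma_S$ is correct.

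\textbf{The gap: uniformity in $k\ge 0$ for $e^{it\langle D\rangle}$.} This is the one step you try to prove yourself rather than cite, and the argument you give does not work.
\begin{itemize}
\item Your geometry is inverted. The Hessian of $\langle\xi\rangle$ has eigenvalue $\langle\xi\rangle^{-1}\sim 2^{-k}$ in the two angular directions and $\langle\xi\rangle^{-3}\sim 2^{-3k}$ in the radial one.
\item The correct wave rescaling is $\xi=2^k\eta$, $s=2^k t$. Time must be rescaled too; otherwise the symbol $\langle 2^k\eta\rangle$ has $\eta$-derivatives of size $2^k$. The phase becomes $s\psi_k(|\eta|)$ with $\psi_k(\rho)=\sqrt{2^{-2k}+\rho^2}$, so $\psi_k''\sim 2^{-2k}$ on $\rho\sim 1$.
\item Hence the ``uniform lower bound on the radial second derivative'' your last paragraph relies on fails exactly in this regime. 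Any argument that uses radial curvature gives constants that blow up as $k\to\infty$.
\item Nor can you treat $t(\langle 2^k\eta\rangle-2^k|\eta|)\approx t2^{-k}/(2|\eta|)$ as a harmless correction to the wave flow. The estimate is global in time, and for $|t|\gtrsim 2^k$ this $\eta$-dependent phase is $\gtrsim 1$ and spreads the packet radially. The Klein--Gordon solution genuinely leaves the wave regime.
\end{itemize}
What you need is a wave-type radial estimate that holds uniformly over the whole family $e^{is\psi(|D|)}$ with $\psi'\sim 1$ and $\lVert\psi\rVert_{C^N}\lesssim 1$ on the annulus. It must use only the angular curvature $\psi'(\rho)/\rho\sim 1$ and no lower bound on $\psi''$. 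Your remark about angular stationary phase points the right way, but it has to carry the whole argument. Alternatively, cite a result that covers Klein--Gordon directly, as the paper does with \cite{GNW}. The low-frequency regime is fine: after parabolic rescaling the phase converges in $C^N$ to $|\eta|^2/2$, with both curvatures $\sim 1$.

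\textbf{Two smaller remarks.}

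First, you obtain the open wave range $\frac1p+\frac2q<1$ (plus $(\infty,2)$), whereas the statement asserts the closed range minus $(2,4)$. This is not a defect of your argument: on the line with $2<p<\infty$ the uniform estimate is false. For radial data, $|x|\,e^{it\langle D\rangle}g$ is a one-dimensional Klein--Gordon wave. For a bump $g$ at frequency $2^k$ it is $\approx 2^{k/2}\Theta(2^k(|x|-t))$ for $1\le t\le \epsilon 2^k$. Hence, when $\frac1p+\frac2q=1$,
\[
2^{k\sigma_W}\lVert P_k e^{it\langle D\rangle}g\rVert_{L^p_tL^q_x}\gtrsim \Bigl(\int_1^{\epsilon 2^k}t^{-p(1-2/q)}\,dt\Bigr)^{1/p}\sim k^{1/p}.
\]
For the pure wave the integral diverges outright. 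So your strict range is the correct one, and it is all the paper ever uses.

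Second, your $TT^*$ explanation of (i) is not coherent.
\begin{itemize}
\item A pointwise rate $\langle t\rangle^{-1}$ is weaker than the standard $\langle t\rangle^{-3/2}$ at unit frequency. Fed into Keel--Tao, it only gives $\frac1p+\frac1q\le\frac12$.
\item An effective rate $t^{-5/2}$ reproduces the line $\frac2p+\frac5q=\frac52$, but Keel--Tao would then include the endpoint $(2,\frac{10}{3})$.
\item The radial gain actually comes from weighted decay. In 3D, $|x|u$ solves the 1D equation, so $|x|\,|e^{it\Delta}f(x)|\lesssim |t|^{-1/2}\lVert\, |y|^{-1}f\rVert_{L^1}$. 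The endpoint exclusion is not a Keel--Tao phenomenon.
\end{itemize}
Since you cite the literature for (i), this does not break the proof, but the heuristic should be dropped or corrected.
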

This result, or equivalent formulations thereof, can be found in many places in the literature, for example \cite{GN, GNW}. In fact, since the curvature of the Klein-Gordon characteristic surface $\xi\mapsto \langle
\xi \rangle  $ is non-degenerate at each localized frequency scale, we can further extend the range of the 
Klein-Gordon admissible pairs from the wave region to the Schrödinger region, albeit
incurring extra derivative losses in high frequency, see \cite[Lemma
3.1]{GNW}. 
For the purposes of this paper, the wave admissible range (\ref{eq:ranges}) is sufficient in the high frequency regime.

For the convenience of the reader, we record in Table \ref{table} the values of $ \sigma_{ S }(p,q) $ and $
\sigma_{ W }(p,q) $ that will be used in Section \ref{section 4}.

\begin{table}[h]
\centering
\makebox[0pt][c]{\parbox{1\textwidth}{%
    \centering
    \begin{minipage}[b]{0.3\textwidth}\centering
      \begin{tabular}{cc}
        $ (p,q) $ & $ \sigma_{ S } (p,q) $  \\
        \midrule[\heavyrulewidth]
        \addlinespace[5pt]
        $ \left( \infty, 2 \right)$ &  $ 0 $  \\
        \addlinespace[5pt]
        $ \left( 4 , 4\right)$ &  $ -\frac{1}{4}  $  \\
        \addlinespace[5pt]
        $ \left( \frac{8}{3} ,3 \right)  $ & $ \frac{1}{4}  $  \\
        \addlinespace[5pt]
        $ (2,6) $ & $ 0  $  \\
        \addlinespace[5pt]
        $ (2,5) $ & $ \frac{1}{10}  $  \\
        \addlinespace[5pt]
        $ (2,4) $ & $ \frac{1}{4}  $  \\
        \addlinespace[5pt]
        $ \left( 2,\frac{10}{3}  \right)  $ & $ \frac{2}{5}  $  \\
        \addlinespace[5pt]
  %      \bottomrule \addlinespace[\belowrulesep]
      \end{tabular}
    \end{minipage}
    \begin{minipage}[b]{0.3\textwidth}\centering
      \begin{tabular}{cc}
        $ (p,q) $ & $ \sigma_{ W } (p,q) $  \\
        \midrule[\heavyrulewidth]
        \addlinespace[5pt]
        $ \left( \infty, 2 \right)$ &  $ 0 $  \\
        \addlinespace[5pt]
        $ \left( 4, 4 \right)$ &  $ -\frac{1}{2}  $  \\
        \addlinespace[5pt]
        $ \left( \frac{8}{3} , 3 \right)$ &  $ -\frac{1}{8}  $  \\
  %      \bottomrule \addlinespace[\belowrulesep]
      \end{tabular}
    \end{minipage}
  }}
  \caption{Reference values for $\sigma_S(p,q)$ and $\sigma_W(p,q)$.}
  \label{table}
\end{table}
Compared to the radial range in (\ref{eq:ranges}), the standard Strichartz range for the
free Schrödinger propagator in three dimensions
\begin{align*}
  &2\leq p_{ S }, q_{ S } \leq \infty,\quad 
  \frac{2}{p_{ S } } + \frac{3}{q_{ S } }  = \frac{3}{2},
\end{align*}
which corresponds to $ \sigma_{ S } (p_{ S } , q_{ S }) = 0$, is much more restrictive.
We use $ \sigma_{ S }  $ to further decompose the set of radially admissible Schrödinger
pairs $ (p,q) $ according to $ \sigma_{ S } (p,q) < 0 $, $\sigma_{ S } (p,q) = 0$, and
$\sigma_{ S } (p,q) > 0$, see Figure \ref{fig:Strichartz}. 
On the scaling line $ \sigma_{ S } = 0 $, the Strichartz estimate (\ref{eq:StrichartzS}) is
uniform across all frequencies, and is classically stated as a single estimate
\begin{align*}
  \lVert e^{ -it\Delta } u_0 \rVert_{ L^{ p } L^{ q } }   \lesssim \lVert u_0 \rVert _{ L^{
  2}  },
\end{align*} 
see \cite[Theorem 2.3]{TaoBook}.
In the region where $ \sigma_{ S } < 0 $, (\ref{eq:StrichartzS}) loses derivatives in high
frequencies; one should think of this region as being accessible from the scaling line
estimates via Sobolev embeddings. 
The extended radial range $ \sigma_{ S } > 0 $ is typically inaccessible for the Schrödinger
evolution outside of spherical symmetry, as can be shown by a Knapp counterexample. 
In this region, (\ref{eq:StrichartzS}) gains derivatives at high frequencies, which will
often be enough to compensate for the loss of derivatives coming from $ \sigma_{ W } $ in
the same region (see Table \ref{table}).

%%%%%%%%% I 

%In low frequencies, (\ref{eq:StrichartzS}) loses derivatives that cannot be compensated by
%$ \sigma_{ W }$, since the Klein-Gordon propagator is also of Schrödinger-type
%in this regime .
%This is a key difference between the Klein-Gordon-Schrödinger system and other similar
%quadratic dispersive systems studied in the literature, such as the Zakharov \cite{GN} or
%Klein-Gordon-Zakharov \cite{GNW}.
%This is also one of the main reasons why our approach to the nonlinear analysis will be split into
%more cases than the aforementioned works.

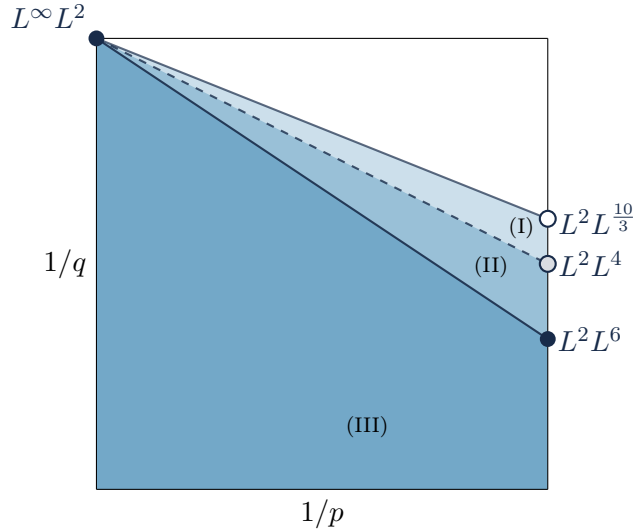
\begin{figure}[hb]
\begin{tikzpicture}[scale=12]
    % ====== COLOR DEFINITIONS (Easy to modify) ======
    % Region colors - using distinct test colors to verify regions
    \definecolor{toplinecolor}{HTML}{182B49}           % Red for top (lowest) line
    \definecolor{middlelinecolor}{HTML}{182B49}        % Orange for middle (highest) line  
    \definecolor{bottomlinecolor}{HTML}{182B49}        % Blue for bottom (dashed) line
    \definecolor{topregion}{HTML}{00629B}              % Light red - between blue and orange lines
    \definecolor{middleregion}{HTML}{00629B}           % Light green - between red and blue lines
    \definecolor{bottomregion}{HTML}{00629B}           % Light blue - between y=0 and red line
    \definecolor{startpointcolor}{HTML}{182B49}        % Green for start point
    \definecolor{axiscolor}{HTML}{000000}              % Black for axes
    % The color can be set to background color to make it transparent
    \definecolor{dualregion}{HTML}{FFFFFF}             % Light yellow - from blue line to top of box
    % ================================================
    % Fill the DUAL region: from the blue dashed line to the top of the box (y=0.5)
    % Blue line: y = (1/2) - (1/2)x
    \fill[dualregion] (0,0.5) -- (0.5,0.25) -- (0.5,0.5) -- (0,0.5) -- cycle;
    
    % Fill the BOTTOM region: between y=0 and the red line
    % Red line: y = (1/2) - (2/3)x
    \fill[bottomregion, opacity=.55] (0,0) -- (0.5,0) -- (0.5,{0.5-2/3*0.5}) -- (0,{0.5-2/3*0}) -- cycle;
    
    % Fill the MIDDLE region: between the red line and blue dashed line
    % Red line: y = (1/2) - (2/3)x
    % Blue line: y = (1/2) - (1/2)x
    \fill[middleregion, opacity=0.40] (0,{0.5-2/3*0}) -- (0.5,{0.5-2/3*0.5}) -- (0.5,0.25) -- (0,0.5) -- cycle;
    
    % Fill the TOP region: between the blue dashed line and orange line
    % Blue line: y = (1/2) - (1/2)x
    % Orange line: y = (1/2) - (2/5)x
    \fill[topregion, opacity=0.20] (0,0.5) -- (0.5,0.25) -- (0.5,0.3) -- (0,0.5) -- cycle;
    
    % Draw the complete border of the square [0, 1/2] x [0, 1/2]
    \draw[axiscolor] (0,0) -- (0.5,0) node[midway, below] {$1/p$};      % Bottom with label
    \draw[axiscolor] (0,0) -- (0,0.5) node[midway, left] {$1/q$};       % Left with label
    \draw[axiscolor] (0.5,0) -- (0.5,0.5);  % Right
    \draw[axiscolor] (0,0.5) -- (0.5,0.5);  % Top
    
    % Draw the three lines
    % Bottom line (solid): y = (1/2) - (2/3)x
    \draw[bottomlinecolor, thick, opacity=0.9] (0,0.5) -- (0.5,{0.5-2/3*0.5});
    
    % Middle line (dashed): y = (1/2) - (1/2)x
    \draw[middlelinecolor, thick, dashed, opacity=0.8] (0,0.5) -- (0.5,0.25);
    % Top line (solid): y = (1/2) - (2/5)x
    \draw[toplinecolor, thick, opacity=0.7] (0,0.5) -- (0.5,0.3);
    
    % Add region labels
    % Define the positioning system using a line passing through all regions
    % Base point for label (III) in bottom region
    \coordinate (base) at (0.3, 0.07);
    % Direction vector for the line (adjust slope here)
    \pgfmathsetmacro{\dx}{0.14}
    \pgfmathsetmacro{\dy}{0.18}
    
    % Label (III) in bottom region - at base point
    \node[font=\scriptsize] at (base) {(III)};
    
    % Label (II) in middle region - along the line from base
    % Parameter t=1 means one unit along the direction vector
    \node[font=\scriptsize] at ($(base) + 0.98*(\dx,\dy)$) {(II)};
    
    % Label (I) in top region - further along the line
    % Parameter t=1.85 (adjust this to move along the line)
    \node[font=\scriptsize] at ($(base) + 1.22*(\dx,\dy)$) {(I)};
    
    % Mark endpoints with dots
    % Top left (all three lines start here)
    \fill[startpointcolor] (0,0.5) circle (0.25pt) node[above left] {$L^\infty L^2$};
    
    % Top line right endpoint (orange, visually highest at x=0.5) - open
    \draw[toplinecolor, fill=white, thick] (0.5,0.3) circle (0.25pt) node[right]
      {$L^2 L^{\frac{10}{3}}$};
    
    % Middle line right endpoint (blue dashed, visually middle at x=0.5) - open
    \draw[middlelinecolor, fill=middlelinecolor!16, thick] (0.5,0.25) circle (0.25pt)
      node[right] {$L^2 L^4$};
    
    % Bottom line right endpoint (red, visually lowest at x=0.5) - filled
    \fill[bottomlinecolor] (0.5,{0.5-2/3*0.5}) circle (0.25pt) node[right] {$L^2 L^6$};
    
\end{tikzpicture}
\caption{Strichartz admissible ranges for the radial wave and Schrödinger equations. 
Region (III) corresponds to $ \sigma_{ S } < 0 $, regions (II) and (III) represent
the increased radial Strichartz range for the wave, and regions (I)--(II) represent the
increased range for Schrödinger.
The faintly colored endpoint $ L^{ 2 } L^{ 4 } $ is included in the Schrödinger admissible range but
not in the wave range.}
\label{fig:Strichartz}
\end{figure}

\subsection{Resolution spaces}\label{resolution}
In this section we define the resolution spaces in which we perform the iteration scheme. 
Let $ 1\leq p < \infty $. 
A function $ \phi :\R\to L_x^2  $ is called a \emph{$ U_{ \s } ^{ p }$-atom}
if there exists a partition $ -\infty = t_0 < t_1 < \cdots < t_{ K }  = \infty $ and
functions $ \left\{ \phi_{ k }  \right\} _{ k=1 }^{ K } \subset L_x^2 $,  $ \phi_1
= 0 $, such that 
\begin{align}\label{eq:atomic}
  \phi(t)
  &= \sum_{ k=1 }^{ K } \1_{ [t_{ k-1 },t_{ k } )}(t) e^{-it\Delta } \phi_{ k },
  \quad
  \sum_{ k=1 }^{ K } \lVert \phi_{ k }  \rVert _{ L_x^2 }^{ p }  = 1.
\end{align} 
Since we are interested in the evolution of radial data, we will assume that atoms are
radial.
We define the atomic spaces $ U_{ \s } ^{ p }$ adapted to the linear Schrödinger
flow by
\begin{align*}
  U_{ \s } ^{ p } 
  &\defeq \left\{ \sum_{ k=1 }^{ \infty  }\lambda_{ k } a_{ k } :
    a_{ k } \text{ is an } U_{ \s } ^{ p }\text{-atom},\, (\lambda_{ k } )_{ k=1 }^{
  \infty } \in \ell^{ 1 } \right\} 
\end{align*} 
with norm
\begin{align*}
  \lVert u \rVert  _{ U_{ \s } ^{ p }  } 
  &\defeq \inf \left\{ \sum_{ k=1 }^{ \infty } \lvert \lambda_{ k } \rvert 
  : u = \sum_{ k=1 }^{ \infty } \lambda_{ k } a_{ k }, a_{ k } \text{ is an } 
  U_{ \s } ^{ p }\text{-atom}\right\}.
\end{align*} 
Similarly, we define the atomic spaces $ U_{ \pm \kg }^{ p }  $ adapted to the (half)
Klein-Gordon flows using atoms
\begin{align}
  \phi(t)
  &= \sum_{ k=1 }^{ K } \1_{ [t_{ k-1 },t_{ k } )}(t) e^{\pm it \langle D \rangle  } \phi_{ k },
  \quad
  \sum_{ k=1 }^{ K } \lVert \phi_{ k }  \rVert _{ L_x^2 }^{ p }  = 1
\end{align} 
and let
\begin{align*}
  U_{ \pm\kg } ^{ p } 
  &\defeq \left\{ \sum_{ k=1 }^{ \infty  }\lambda_{ k } a_{ k } :
    a_{ k } \text{ is an } U_{ \pm\kg } ^{ p }\text{-atom},\, (\lambda_{ k } )_{ k=1 }^{
  \infty } \in \ell^{ 1 } \right\} 
\end{align*} 
equipped with the analogous norm.

As a companion space, we define $ V_{ \s } ^{ p } $ to be the space of right-continuous
functions $ v: \R\to L_x^2 $ such that $ t\mapsto e^{ it\Delta }v(t) $ has bounded
$ p $-variation and $\lim_{t\to\infty} e^{it\Delta}v(t)$ exists in $L_x^2$. 
We equip $V^{p}_\Delta$ with the norm
\begin{align*}
  \lVert v \rVert  _{ V_{ \s } ^{ p }  } 
  &\defeq \sup_{ \left\{ t_{ k }  \right\} _{ k=1 }^{ \infty } \text{ partition} } 
  \left( \sum_{ k=1 }^{ K } \lVert e^{ it_{ k } \Delta } v(t_{ k }) 
    - e^{ it_{ k-1 } \Delta } v( t_{ k-1 } ) \rVert_{ L_{ x }^{ 2 }  } ^{ p }  \right) ^{
  \frac{1}{p}  } < \infty,
\end{align*} 
and similarly for $ V_{ \pm\kg }^{ 2 }  $.
%When $ s = 0 $, we denote $ U_{ \s } ^{ p,s } $ and $ V_{ \s } ^{ p,s }  $ simply by 
%$ U_{ \s} ^{ p } $ and $ V_{ \s } ^{ p} $.

In view of (\ref{eq:atomic}), one should regard $ U^{ 2 }  $ functions as ``almost''
free solutions.
From a technical perspective,  $ U^{ 2 } $  unifies many properties of different 
structures, streamlining both the linear and nonlinear analysis in this paper.
Perhaps most remarkably, $ U^{ 2 } $ also allows for the extension of bilinear restriction
estimates, which typically are only available for free solutions, to higher order iterates. 
We summarize many of the properties of $ U^{ 2 } $ and $ V^{ 2 }  $ in Propositions
\ref{properties}, \ref{inhomogeneous} and \ref{thm:candybilinear}.

For $b\in\R $, we define
\begin{align*}
  \lVert f \rVert _{ \dot X_{ \s } ^{ 0,b,\infty } } 
  \defeq \sup_{ j\in\Z } 2^{ bj } \lVert Q_{ j }^{ \s } f \rVert  _{ L_{ t,x }^{ 2 } } ,
  \quad
  \lVert f \rVert _{ \dot X_{ \kg } ^{ 0,b,
  \infty} } 
  \defeq \sup_{ j\in\Z } 2^{ bj } \lVert Q_{ j }^{ \kg } f \rVert  _{ L_{ t,x }^{ 2 } } .
\end{align*} 

%All constructions above can be easily adapted to the linear Klein-Gordon flows $ e^{ \pm
%i\langle D \rangle  }$, from which we get the spaces $ U_{ \pm\kg  }^{ 2 }
%$, $ V_{ \pm\kg  }^{ 2 } $, and $ \dot X^{ 0,b } _{ \pm\kg }  $.

%For convenience, we state the properties of $U^2$ and $V^2$ for $ s= 0 $ however the same
%properties holds for $ s \neq 0 $ with minor modifications.
%the Schrödinger variants and
%unless otherwise noted, the reader should assume that the same properties hold for the
%Klein-Gordon variants and 
\begin{prop}\label{properties}
  The following hold:
  \begin{enumerate}[(i)]
    \item The operators $ Q_{ j } ^{ \s }$ (resp. $ Q_{ j }^{ \kg }  $ ), $ j\in\Z $,
      are uniformly disposable in $ U_{\s} ^{ 2 } $ (resp. $ U_{ \kg }^{ 2 }  $), in the
      sense that
      \begin{align*}
        \lVert Q_{ j } f \rVert_{ U_{ \s }^{ 2 }  } \lesssim \lVert f \rVert  _{ U_{ \s }^{
        2}  } 
        \quad\text{and}\quad 
        \lVert Q_{ j } g \rVert_{ U_{ \kg }^{ 2 }  } \lesssim \lVert g \rVert  _{ U_{ \kg }^{
        2}  } 
      \end{align*} 
      for all $ f\in U_{ \s }^{ 2 }  $ and $ g\in U_{ \kg }^{ 2 }  $
      with implicit constant independent of $ j $.
      
    \item For any $ p > 2 $, we have the continuous embeddings
      \begin{align*}
        U_{\s}^{ 2 }
        \subset  V_{ \s }^{ 2 }  
        \subset  \left( U_{ \s }^{ p } \cap \dot X_{\s } ^{ 0,\frac{1}{2},\infty }\right)
        \quad\text{and}\quad 
        U_{\kg}^{ 2 }
        \subset  V_{ \kg }^{ 2 }  
        \subset  \left( U_{ \kg }^{ p } \cap \dot X_{\kg } ^{ 0,\frac{1}{2},\infty }\right).
      \end{align*} 
      
    \item Let $ (p,q) $ be a radially admissible Schrödinger pair (\ref{eq:ranges}) and 
      $ \phi\in U_{ \Delta }^{ 2 }  $, then
      \begin{align*}
        2^{ k\sigma_{ S }} \lVert P_{ k } \phi \rVert_{ L^{ p } L^{ q } } 
        &\lesssim \lVert \phi \rVert _{ U_{ \s }^{ 2 } } .
      \end{align*} 
      Moreover, if $ p > 2 $, 
      \begin{align*}
        2^{ k\sigma_{ S }  } \lVert P_{ k } \phi \rVert  _{ L^{ p } L^{ q }  } 
        &\lesssim \lVert \phi \rVert _{ V_{ \s }^{ 2 }  } .
      \end{align*} 

    \item Similarly, if $ (p,q) $ is radially admissible wave pair (\ref{eq:ranges}) and
      $ \psi\in U_{ \kg }^{ 2 }  $, then
      \begin{align*}
        2^{ k\sigma_{ W }} \lVert P_{ k } \psi \rVert_{ L^{ p } L^{ q } } 
        &\lesssim \lVert \psi \rVert _{ U_{ \kg }^{ 2 } } 
        \quad\text{and}\quad 
        2^{ k\sigma_{ W }  } \lVert P_{ k } \psi \rVert  _{ L^{ p } L^{ q }  } 
        \lesssim \lVert \psi \rVert _{ V_{ \kg }^{ 2 }  },
      \end{align*} 
      where the $ V^{ 2 }_{ \kg }  $ estimate only holds when $ p > 2 $.

    \item Let $ c \in \Z$ be a uniform frequency cutoff.
      If $ (p,q) $ is radially admissible Schrödinger pair (\ref{eq:ranges}) and 
      $ \psi\in U_{ \kg }^{ 2 }  $, then
      \begin{align*}
        2^{ k\sigma_{ S }} \lVert P_{ k } \psi \rVert_{ L^{ p } L^{ q } } 
        &\lesssim \lVert \psi \rVert _{ U_{ \kg }^{ 2 } } 
        \quad\text{and}\quad 
        2^{ k\sigma_{ S }  } \lVert P_{ k } \psi \rVert  _{ L^{ p } L^{ q }  } 
        \lesssim \lVert \psi \rVert _{ V_{ \kg }^{ 2 }  } 
      \end{align*}
      uniformly in $ k\leq c$, 
      where the $ V^{ 2 }_{ \kg }  $ estimate only holds when $ p > 2 $.
  \end{enumerate}
\end{prop}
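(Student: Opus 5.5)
The plan is to treat the five items separately. Items (i)–(ii) are standard structural facts about $U^p$/$V^p$ spaces (Koch–Tataru, Hadac–Herr–Koch). Items (iii)–(v) follow by transferring the free estimates of Proposition \ref{linearstructure} to atoms and then interpolating.

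For (i), I would use that conjugation by the linear flow intertwines $Q_j^{\s}$ with a temporal Fourier multiplier. Writing $\phi(t)=e^{-it\Delta}w(t)$, we have
\begin{align*}
  Q_j^{\s}\phi = e^{-it\Delta}\,\rho_j(D_t)\,w .
\end{align*}
So it suffices to show that convolution in $t$ with the kernel $\check\rho_j$ is bounded on $U^2$ of $L^2_x$-valued functions, uniformly in $j$. The kernel is a dilate of a fixed Schwartz function, so $\|\check\rho_j\|_{L^1_t}$ is independent of $j$. Moreover, translation in $t$ is an isometry of $U^2$, so $\check\rho_j * w$ is an $L^1$-average of translates of $w$. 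By Minkowski's inequality for the atomic norm this gives $\|Q_j\phi\|_{U^2}\lesssim\|\check\rho_j\|_{L^1}\|\phi\|_{U^2}$. The identical argument works for $Q_j^{\kg}$.

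For (ii), the inclusion $U^2\subset V^2$ is checked directly on atoms: the $2$-variation of a step function $\sum \1_{[t_{k-1},t_k)}\phi_k$ is controlled by $\sum\|\phi_k-\phi_{k-1}\|^2\lesssim \sum \|\phi_k\|^2$. The inclusion $V^2\subset U^p$ for $p>2$ is the Koch–Tataru/Hadac–Herr–Koch lemma, proved by decomposing a $V^2$ function into step functions with geometrically decaying $U^p$ atoms. For $V^2\subset \dot X^{0,\frac12,\infty}$, I would again conjugate by the flow and use that for a function $w$ of bounded $2$-variation,
\begin{align*}
  \|\rho_j(D_t)w\|_{L^2_t}\lesssim 2^{-j/2}\|w\|_{V^2}.
\end{align*}
This holds because $\rho_j(D_t)w=\check\rho_j*(w-w(\cdot-h))$-type differences on the time scale $2^{-j}$. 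Summing $\|w(t)-w(t-2^{-j})\|^2$ over a $2^{-j}$-grid is bounded by the $2$-variation, and integrating over offsets gives the $2^{-j}$ factor in the squared norm.

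For (iii)–(v), the $U^2$ statements I would first prove on a single atom $\phi=\sum_k\1_{[t_{k-1},t_k)}e^{-it\Delta}\phi_k$, using $p\ge 2$ and the free estimate of Proposition \ref{linearstructure} on each piece:
\begin{align*}
  \|P_k\phi\|_{L^pL^q}^p=\sum_k\|\1_{[t_{k-1},t_k)}P_ke^{-it\Delta}\phi_k\|_{L^pL^q}^p\lesssim 2^{-pk\sigma}\sum\|\phi_k\|_{L^2}^p\le 2^{-pk\sigma}\Big(\sum\|\phi_k\|^2\Big)^{p/2}.
\end{align*}
This step requires the atoms to be radial, which is how the spaces are defined. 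It extends to $U^2$ by the triangle inequality over the atomic decomposition. The same argument with $U^{\tilde p}$ in place of $U^2$ works for any $2\le\tilde p\le p$.

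For the $V^2$ statements with $p>2$, I would pick $\tilde p\in(2,p]$ and combine $V^2\subset U^{\tilde p}$ from (ii) with the $U^{\tilde p}$ Strichartz bound just proved. The wave-type estimate (iv) for $k\ge0$ and the Schrödinger-type estimate (v) for $k\le c$ are obtained identically from part (ii) of Proposition \ref{linearstructure}.

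The one delicate point is the $V^2$ embedding into $U^p$: it loses the endpoint, which is why the $V^2$ bounds require $p>2$. I would simply cite it rather than reprove it.
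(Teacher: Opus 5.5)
Your proposal is correct and follows essentially the paper's route. The paper cites Hadac--Herr--Koch for (i)--(ii); those cited proofs are the time-convolution/translation-invariance and grid-variation arguments you sketch. It then verifies (iii)--(v) on radial atoms and deduces the $V^2$ bounds from $V^2\subset U^p$ for $p>2$. Your atom computation uses $\sum_k\lVert\phi_k\rVert_{L^2}^2=1$ together with $\ell^2\subset\ell^p$, and $\tilde p\le p$ in the $U^{\tilde p}$ case. That is the correct form of the paper's displayed computation, which instead writes the $U^p$-atom normalization $\sum_k\lVert\phi_k\rVert_{L^2}^p=1$.
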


\begin{proof}
  Proofs of properties (i) and (ii) can be found in \cite[Proposition 2.14, Corollary
  2.18]{HHK}.
  Property (iii) can be directly verified for radial atoms $ \phi = \sum_{ k=1 }^{ K }
  \1_{ [t_{ k-1 } ,t_{ k } ) } (t) e^{-it\Delta } \phi_{ k } $, 
  \begin{align*}
    2^{ kp\sigma_{ S } } \lVert \phi \rVert _{ L^{ p } L^{ q }  }^{ p } 
    &= 2^{ kp\sigma_{ S }  } \sum_{ k=1 }^{ K } \lVert e^{ -it\Delta } \phi_{ k }  \rVert _{
    L^{ p } L^{ q } }^{ p } 
    \lesssim \sum_{ k=1 }^{ K } \lVert \phi_{ k }  \rVert _{ L^{ 2 } _{ x }  }^{ p } = 1.
  \end{align*} 
  The equivalent statement for $ V_{ \s } ^{2  }  $ follows from the embedding $ V_{ \s } ^{
  2 }\subset U_{ \s }^{ p } $, $p > 2$.
\end{proof}

%Since the Strichartz estimates involve frequency localized functions, it is natural
%to consider frequency localized versions of $ U^{ 2 }  $ and $ V^{ 2 }  $.
%For $ k\in \Z $ and $ s\geq 0 $, we denote by $ U_{ \s,k }^{ 2,s }  $ (resp. $ V_{ \pm \kg, k   }^{
%2,s} $) the space of $ \phi\in U_{ \s }^{ 2,s }  $ such that $ \phi = \tilde P_{ k } \phi $
%(resp. $ \phi\in V_{ \pm\kg }^{ 2,s } $ such that $ \phi = \tilde P_{ k } \phi $ ).
%In particular, for any $ \phi\in U_{ \s,k }^{ 2,s }$ and $ \psi\in V_{ \s,k }^{ 2,s }  $,
%\begin{align*}
%  \lVert \phi \rVert _{ U_{ \s,k }^{ 2,s }  } = 2^{ ks } \lVert \phi \rVert_{ U_{ \s,k }^{ 2
%  }}
%  \quad\text{and}\quad 
%  \lVert \psi \rVert _{ V_{ \s,k }^{ 2,s } } = 2^{ ks } \lVert \psi \rVert_{ V_{ \s,k }^{2
%  }}.
%\end{align*} 
We are ready to state our estimates for inhomogeneous solutions.
%, for which we assume without loss of generality that $ s = 0 $.

\begin{lemma}\label{inhomogeneous}
  Let $ k\in\Z $, and let  $u_0, N_0  \in L^{ 2 } (\R^{ 3 };\C) $ 
  and $ f, g \in L_{ t, \emph{loc} }^{ 1
  }\left( \R; L_{ x }^{ 2 } (\R^{ 3 };\C)  \right) $ be functions localized at frequency $ 2^{ k }$. 
  Let
  \begin{align*}
    u(t) 
    &= e^{ -it\Delta} u_0 + \int_{ 0 }^{ t } e^{ -i(t-s)\Delta} f(s) \dif s,
    \quad
    N(t) 
    = e^{ it\langle D \rangle  } N_0 + \int_{ 0 }^{ t } e^{ i(t-s)\langle D \rangle  } g(s) \dif s.
  \end{align*} 
  Then, $ u = \tilde P_{ k } u $ and $ N = \tilde P_{ k } N $ are the unique solutions of
  \begin{align*}
    i\partial_{ t } u + \Delta u 
    &= f,
    \quad
    i\partial_{ t } N - \langle D \rangle  N 
    = g,
  \end{align*} 
  with
  $ u,N\in C\left( \R ; L^{ 2 } (\R^{ 3 };\C) \right)  $, and
  \begin{nalign}\label{eq:duality}
    \lVert u \rVert  _{ U_{ \s } ^{ 2 }  } 
    & \lesssim \lVert u_0 \rVert _{ L^{ 2 } (\R^{ 3 } ) } 
    + \sup_{ \lVert h \rVert_{V^2_{\s}}\leq 1} 
    \left\lvert \int_{ \R^{ 1+3 }  } \langle f, h \rangle \dif
    x\dif t \right\rvert,\\
    \lVert N \rVert  _{ U_{\kg } ^{ 2 }  } 
    & \lesssim \lVert N_0 \rVert _{ L^{ 2 } (\R^{ 3 } ) } 
    + \sup_{  \lVert h \rVert_{V^2_{\kg}}\leq 1  } \left\lvert \int_{ \R^{ 1+3 }  } \langle g, h \rangle \dif
    x\dif t \right\rvert,
  \end{nalign} 
  provided that $h \in L_{ t }^{ \infty } \left( \R;L^{ 2 } (\R^{ 3 }, \C)\right)$ is localized to frequency $\sim 2^k$ and the right-hand sides of (\ref{eq:duality}) are finite.
\end{lemma}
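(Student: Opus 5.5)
The argument has three parts: the trivial facts (frequency localization, uniqueness, continuity), the free part, and the Duhamel part via $U^2$--$V^2$ duality. Everything is done for the Schrödinger equation; the Klein--Gordon case is identical after replacing $e^{-it\Delta}$ by $e^{it\langle D\rangle}$.

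\emph{Trivial facts.} Since $u_0$ and $f(s)$ are supported at frequency $\sim 2^k$ and the propagators are Fourier multipliers, $u=\tilde P_k u$. Continuity in $L^2_x$ and the Duhamel identity follow from $f\in L^1_{t,\mathrm{loc}}L^2_x$ by dominated convergence. Uniqueness holds because the difference $w$ of two solutions solves the free equation with zero data, so $e^{it\Delta}w$ is constant, hence zero.

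\emph{Free part.} The free term satisfies $\|e^{-it\Delta}u_0\|_{U^2_\s}\le \|u_0\|_{L^2}$. To see this, take the atom with partition $t_0=-\infty<t_1<t_2=\infty$: it equals $e^{-it\Delta}u_0$ on $[t_1,\infty)$. Let $t_1\to-\infty$; equivalently, use the convention that free solutions are $U^2$ functions with norm $\|u_0\|_{L^2}$.

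\emph{Duhamel part.} Let $I(t)=\int_0^t e^{-i(t-s)\Delta}f(s)\,ds$, and split it as $\mathbf 1_{t\ge0}I+\mathbf 1_{t<0}I$; each half is treated identically.

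1. Invoke the duality theorem of Hadac--Herr--Koch \cite[Theorem 2.8 and Proposition 2.10]{HHK}. It states that $(U^2_\s)^*\cong V^2_\s$ via the pairing
\[
B(v,w)=\sum_j\langle v(t_j)-v(t_{j-1}),\,w(t_j)\rangle ,
\]
taken along the partitions. It also states that for $f\in L^1_{\mathrm{loc}}L^2$ with the Duhamel integral in $V^2$ (or in $U^2$ when the pairing supremum is finite),
\[
\|I\|_{U^2_\s}=\sup_{\|h\|_{V^2_\s}\le1}\Big|\int\langle f,h\rangle\,dx\,dt\Big|.
\]

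2. Since $I$ is a priori only known to be continuous, first truncate $f$ to $f\mathbf 1_{[-T,T]}$ and approximate it in $L^1L^2$ by step functions. For step functions $I$ is clearly a finite sum of atoms, so it lies in $U^2$, and the identity follows by testing $B$ against step functions $h$. Then pass $T\to\infty$ using the finiteness of the right-hand side together with the closedness of $U^2$.

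3. Use frequency localization to restrict the supremum to $h=\tilde P_k h$. Since $\tilde{\tilde P}_k$ is bounded on $V^2$ and $\int\langle f,h\rangle=\int\langle f,\tilde{\tilde P}_k h\rangle$, localized $h$ suffices. Also $V^2\subset L^\infty L^2$, which justifies the condition $h\in L^\infty_tL^2_x$ in the statement.

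The main obstacle is step 2: justifying that $I$ belongs to $U^2$, rather than merely to $V^2$, using only finiteness of the dual pairing. This needs the precise form of the HHK argument, namely the boundedness of the pairing extending the functional to all of $V^2$, combined with the fact that $U^2$ is the predual on the appropriate subspace.
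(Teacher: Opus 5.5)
Your route is the paper's: the paper disposes of this lemma by citing the $U^2$--$V^2$ duality of \cite{HHK} and its frequency-localized form in \cite{HTT}. Your trivial facts and your step 3 (restricting to localized $h$) are fine. Two parts of your justification, however, fail as written.

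\emph{The free part.} With the atom convention fixed in the paper ($\phi_1=0$), every atom vanishes on $(-\infty,t_1)$. Since $\ell^1$-sums of atoms converge in $L^\infty_tL^2_x$, every $u\in U^2_\Delta$ satisfies $e^{it\Delta}u(t)\to0$ as $t\to-\infty$. Hence $e^{-it\Delta}u_0\notin U^2_\Delta$ for $u_0\neq0$. Moreover, the atoms $\1_{[t_1,\infty)}e^{-it\Delta}u_0$ do not converge in $U^2_\Delta$ as $t_1\to-\infty$: any two differ by a function of $L^\infty_tL^2_x$-norm $\lVert u_0\rVert_{L^2}$, which bounds their $U^2_\Delta$-distance from below. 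So ``letting $t_1\to-\infty$'' and ``using the convention that free waves lie in $U^2$'' are not equivalent. The latter defines a different space (a nonzero profile is allowed on $(-\infty,t_1)$), for which the duality must be re-derived. The same asymmetry breaks ``each half is treated identically''. For $t<0$ the profile $e^{it\Delta}I(t)=-\int_t^0e^{is\Delta}f(s)\,ds$ does not tend to $0$ as $t\to-\infty$ in general, so $\1_{t<0}I\notin U^2_\Delta$. With the stated convention you can only prove the bound for $\1_{[0,\infty)}u$, whose free part is $\lVert u_0\rVert_{L^2}$ times the atom with $t_1=0$. The case $t<0$ must then be handled by time reversal with the reflected convention.

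\emph{Step 2.} The Duhamel term of a step function is not a finite sum of atoms: its profile $\int_0^te^{is\Delta}f(s)\,ds$ is continuous and non-constant. No approximation is needed, though. Set $I_T:=\1_{[0,\infty)}\int_0^te^{-i(t-s)\Delta}(\1_{[0,T)}f)(s)\,ds$. Its profile is absolutely continuous, vanishes for $t\le0$, and has variation at most $\lVert\1_{[0,T)}f\rVert_{L^1_tL^2_x}$. Right-continuous bounded-variation profiles vanishing at $-\infty$ lie in $U^2$, so $I_T\in U^2_\Delta$. The duality identity of \cite{HHK} is formulated for exactly such profiles, not for general $f\in L^1_{\mathrm{loc}}$ as your step 1 suggests. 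It gives $\lVert I_T\rVert_{U^2_\Delta}\lesssim M$, where $M$ is the supremum in (\ref{eq:duality}).

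The real gap, which you flag but leave open, is $T\to\infty$. Uniform bounds plus locally uniform convergence only place the limit in $V^2_\Delta$ a priori. Closedness of $U^2_\Delta$ needs $\lVert I_{T'}-I_T\rVert_{U^2_\Delta}\to0$, i.e.\ smallness of $\sup_{\lVert h\rVert_{V^2_\Delta}\le1}\lvert\int_T^{T'}\langle f,h\rangle\rvert$. Appealing to ``$U^2$ is the predual'' does not help: a bounded functional on $V^2$ comes from $U^2$ only if it is weak-$*$ continuous, which is the same tail issue. The tail smallness does follow from $M<\infty$. Define $\omega(T):=\sup\{\lvert\int\langle f,h\rangle\rvert:\lVert h\rVert_{V^2_\Delta}\le1,\ h=0\text{ on }(-\infty,T)\}$, and suppose $\omega(T)\ge c>0$ for all $T$. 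Then, inductively (truncating in time and rotating phases), you can choose $h_1,\dots,h_J$ supported on consecutive disjoint intervals $[T_j,S_j)$ with $\lVert h_j\rVert_{V^2_\Delta}\lesssim1$ and $\int\langle f,h_j\rangle\ge c/2$. The squared $2$-variation is additive over disjoint time blocks up to boundary jumps, so $h=\sum_jh_j$ has $\lVert h\rVert_{V^2_\Delta}\lesssim\sqrt J$. But $\int\langle f,h\rangle\ge cJ/2$, contradicting $\lvert\int\langle f,h\rangle\rvert\le M\lVert h\rVert_{V^2_\Delta}$ for large $J$. Applying the finite-interval identity to $\1_{[T,T')}f$ then gives $\lVert I_{T'}-I_T\rVert_{U^2_\Delta}\lesssim\omega(T)\to0$. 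Hence $I_T\to\1_{[0,\infty)}I$ in $U^2_\Delta$, with norm $\lesssim M$.
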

The lemma above is the standard $ U^{ 2 } $-duality theory, see \cite[Theorem
2.8]{HHK} and, for a frequency localized version, see 
\cite[Proposition 2.11]{HTT}.

%To construct our resolution space, we need to recombine the frequency localized spaces $ U_{
%k}^{ 2 }  $.
%For each $ 1\leq q < \infty  $, we define the norm
%\begin{align*}
%  \lVert \phi \rVert _{ \ell^{ q } U^{ 2 } } 
%  &\defeq\left( \sum_{ k } \lVert P_{ k } \phi \rVert_{ U_{ k }^{ 2 }  }^{ q }  \right) ^{
%  \frac{1}{q}  }.
%\end{align*} 
%The space $ \ell^{ q } U^{ 2 } $ is the completion of
%\textcolor{SkyBlue}{
%  I don't know what the best way of defining this is.
%}
%
%
%The proof of the proposition below is based on \cite[Proposition 4.3]{CH-wm}.
%
%\begin{prop}
%  We have the following continuous embeddings:
%  \begin{align*}
%    \ell^{ 1 } U^{ 2 } \subset  U^{ 2 } \subset \ell^{ 2 } U^{ 2 } \subset  V^{ 2 } .
%  \end{align*} 
%\end{prop}
%
%\begin{proof}
%  \textcolor{SkyBlue}{
%    Prove it.
%  }
%  
%\end{proof}

We define the resolution spaces $ Z^{ \s }_{ s } $ and $ Z_{ \kg }^{ s }  $ at regularity $
s \in\R $ by
\begin{align*}
  Z^{ s }_{ \s } 
  &\defeq \cl\left( \left\{ u\in C\left( \R; L_{ x }^{ 2 }  \right) \cap 
    U_{ \s } ^{ 2 } : \lVert P_{ \leq 0 } u \rVert  _{ U_{ \s }^{ 2 }  } 
    + \sum_{ k\geq 1 } 2^{ 2ks } \lVert P_{ k } u \rVert_{ U_{ \s }^{ 2 }} ^2< \infty
    \right\}  \right) \\
  Z^{ s }_{ \kg }  
  &\defeq \cl\left( \left\{ N\in C\left( \R; L_{ x }^{ 2 } \right) \cap 
    U_{ \kg } ^{ 2 } : \lVert P_{ \leq 0 } N \rVert  _{ U_{ \kg }^{ 2 } }
    +\sum_{ k\geq 1 } 2^{ 2ks } \lVert P_{ k } N \rVert_{ U_{ \kg }^{ 2
    }  }^2 < \infty \right\} \right) 
 \end{align*} 
with norms
\begin{align*}
    \lVert u \rVert_{ Z_{ \s }^{ s }   } 
  &\defeq \lVert P_{ < -10 } u \rVert_{ U_{ \s }^{ 2 } }
    + \left( \sum_{ k\geq -10 } 2^{ 2ks } \lVert P_{ k } u \rVert_{ U_{ \s }^{ 2 } }^{ 2 }
    \right) ^{ \frac{1}{2}  },\\
    \lVert u \rVert_{ Z_{ \kg }^{ s } } 
  &\defeq \lVert P_{ < -10 } u \rVert_{ U_{ \kg }^{ 2 } } 
    + \left( \sum_{ k\geq -10 } 2^{ 2ks } \lVert P_{ k } u \rVert_{ U_{ \kg }^{ 2 } }^{ 2 }
    \right) ^{ \frac{1}{2} },
\end{align*} 
where $\cl$ denotes the closure (completion) under the natural metric topology.

The choice of low frequency cutoff $ P_{ < -10 }  $ is somewhat unusual but necessary for
our multilinear analysis in Section \ref{section 4}. 
By Lemmas \ref{lemma:sch nonresonance} and \ref{lemma: kg nonresonance}, nonlinear
interactions at low enough frequencies are uniformly nonresonant and can be treated all at
once.
This ceases to be the case near frequency $ k = 1 $, which prevents us from using the more
typical $ P_{ < 0 }  $ cutoff.

In fact, the structures above are based on a more general Besov-type construction
for $ U^{ 2 } $ and $ V^{ 2 } $ spaces.
Let $ 1\leq p <\infty $.
The space $ \ell^{ p } U^{ 2 }$ is the completion of the set of $ u\in U^{ 2 }  $ such that
$ \sum_{ k\geq 1 } \lVert P_{ k } u \rVert_{ U^{ 2 } }^{ p } < \infty  $
with norm
\begin{align*}
  \lVert u \rVert  _{ \ell^{ p } U^{ 2 }  } 
  \defeq
  \left( \lVert P_{ < -10 } u \rVert_{ U^{ 2 }  }^{ p }
   + \sum_{ k\geq -10 } \lVert P_{ k } u \rVert_{ U^{ 2 }  } ^{ p } \right) ^{ \frac{1}{p}  }.
\end{align*} 
Similarly, we also define $ \ell^{ p } V^{ 2 }  $ by replacing the $ U^{ 2 }  $-based norms
above by $ V^{ 2 }  $ norms.
In particular, $ U^{ 2 }  $ and $ V^{ 2 }  $ satisfy a one-sided almost orthogonality
property, see \cite[Proposition 4.3]{CH-wm}.

\begin{prop}\label{prop:ellp}
  The following embeddings are continuous
  \begin{align*}
    %\ell^{ 1 } U^{ 2 } 
    %\subset  
    U^{ 2 } 
    \subset \ell^{ 2 } U^{ 2 } 
    \subset \ell^{ 2 } V^{ 2 } 
    \subset  V^{ 2 }.
  \end{align*} 
\end{prop}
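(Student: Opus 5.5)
We prove the three inclusions $U^2\subset \ell^2U^2$, $\ell^2U^2\subset\ell^2V^2$, $\ell^2V^2\subset V^2$ separately, for either flow ($\Delta$ or $\pm\kg$). The argument is the same in both cases, since it only uses that the propagator commutes with the Littlewood–Paley projections and is unitary on $L^2_x$.

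\emph{Middle inclusion.} This is immediate from $U^2\subset V^2$ (Proposition \ref{properties}(ii)), applied to each piece $P_{<-10}u$ and $P_ku$ and then summed in $\ell^2$.

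\emph{First inclusion.} By the atomic definition and the triangle inequality in $\ell^2$, it suffices to bound a single atom $\phi=\sum_j \1_{[t_{j-1},t_j)}e^{-it\Delta}\phi_j$ with $\sum_j\|\phi_j\|_{L^2}^2=1$. Since $P_k$ commutes with the flow, $P_k\phi=\sum_j \1_{[t_{j-1},t_j)}e^{-it\Delta}P_k\phi_j$. If $c_k^2:=\sum_j\|P_k\phi_j\|_{L^2}^2>0$, then $P_k\phi/c_k$ is an atom, so $\|P_k\phi\|_{U^2}\le c_k$; the same holds for $P_{<-10}$. By almost orthogonality of the Littlewood–Paley pieces, since the multipliers $\rho_k$ have finite overlap and $P_{<-10}$ is bounded,
\[
\|P_{<-10}\phi\|_{U^2}^2+\sum_{k\ge -10}\|P_k\phi\|_{U^2}^2\lesssim \sum_j\Big(\|P_{<-10}\phi_j\|_{L^2}^2+\sum_{k}\|P_k\phi_j\|_{L^2}^2\Big)\lesssim 1 .
\]
Summing over the atoms in a decomposition of $u$ with coefficients $\lambda_n$ gives $\|u\|_{\ell^2U^2}\lesssim\sum_n|\lambda_n|$. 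Taking the infimum over decompositions gives the first inclusion.

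\emph{Last inclusion.} This is the substantive step; we follow \cite[Proposition 4.3]{CH-wm}. Let $v\in\ell^2V^2$ and write $w(t)=e^{it\Delta}v(t)$, so $w_k=P_kw=e^{it\Delta}P_kv$. Fix a partition $\{t_j\}$ and set $\Delta_j:=w(t_j)-w(t_{j-1})$. The frequency pieces $P_{<-10}\Delta_j$ and $P_k\Delta_j$ have Fourier supports with bounded overlap, so
\[
\sum_j\|\Delta_j\|_{L^2}^2\lesssim \sum_j\Big(\|P_{<-10}\Delta_j\|_{L^2}^2+\sum_{k\ge-10}\|P_k\Delta_j\|_{L^2}^2\Big).
\]
Exchanging the sums over $j$ and $k$, each inner sum $\sum_j\|P_k\Delta_j\|_{L^2}^2$ is at most $\|P_kv\|_{V^2}^2$, and similarly for $P_{<-10}$. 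Taking the supremum over partitions gives $\|v\|_{V^2}\lesssim\|v\|_{\ell^2V^2}$. Because the partition is fixed before splitting in frequency, no loss occurs here; this exchange of the supremum is the point where $V^2$ (as opposed to $U^2$) admits the $\ell^2$ sum.

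\emph{Qualitative conditions.} It remains to check that the limit $v$ satisfies the qualitative requirements in the definition of $V^2$. Right-continuity and the existence of $\lim_{t\to\infty}w(t)$ in $L^2$ follow from the corresponding properties of each piece together with uniform $\ell^2$ tails. Indeed, $\|\sum_{k>K}P_kv\|_{L^\infty L^2}\lesssim(\sum_{k>K}\|P_kv\|_{V^2}^2)^{1/2}\to0$, so the partial sums converge in $V^2$, and the estimate above passes to the completion. Throughout, the implicit constants depend only on the overlap of the cutoffs $\rho_k$. The only delicate points are this completion/limit argument and the frequency orthogonality, which is exact enough for constants uniform in $k$.
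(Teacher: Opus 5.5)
Your proof is correct and is essentially the argument the paper imports from \cite[Proposition 4.3]{CH-wm}. For the first inclusion you project each $U^2$ atom step by step and use $\sum_k\|P_kf\|_{L^2}^2\lesssim\|f\|_{L^2}^2$, and for $\ell^2V^2\subset V^2$ you fix the partition before applying Littlewood--Paley orthogonality to each increment. One small fix is needed in the completion step: the $V^2$-convergence of the partial sums follows from applying your $V^2$ inequality to the tail $\sum_{k>K}P_kv$, not from the $L^\infty L^2$ bound you wrote.
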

\noindent
Below we have a direct consequence of Proposition \ref{prop:ellp} and our definition of $Z^s$.
\begin{coro}
    Let $s\in \R$ and let $\Phi = \s$ or $\kg$.
    Then,
    \begin{align}\label{eq:scattering}
        \lVert \langle D\rangle^s u \rVert_{V^2_\Phi}
        \lesssim
        \lVert \langle D\rangle^s u \rVert_{\ell^2 U^2_\Phi}
        \lesssim \lVert u \rVert_{Z^s_\Phi}
    \end{align}
    uniformly in $u\in Z^s_\Phi$.
\end{coro}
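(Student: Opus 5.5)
The corollary follows from two observations. The first inequality is the last embedding $\ell^2V^2_\Phi\subset V^2_\Phi$ of Proposition \ref{prop:ellp}, combined with $\ell^2U^2_\Phi\subset \ell^2 V^2_\Phi$. The second inequality comes from comparing the dyadic pieces of $\langle D\rangle^s u$ with those of $u$ directly from the definitions. I will apply Proposition \ref{prop:ellp} to $v\defeq \langle D\rangle^s u$. This gives
\begin{align*}
  \lVert v \rVert_{V^2_\Phi}\lesssim \lVert v\rVert_{\ell^2V^2_\Phi}\lesssim \lVert v\rVert_{\ell^2U^2_\Phi},
\end{align*}
where the second step uses $U^2_\Phi\subset V^2_\Phi$ (Proposition \ref{properties}(ii)) on each dyadic piece. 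It therefore remains to show $\lVert \langle D\rangle^s u\rVert_{\ell^2U^2_\Phi}\lesssim \lVert u\rVert_{Z^s_\Phi}$.

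For this I will use the fact that Fourier multipliers in $x$ which are bounded on $L^2_x$ and commute with the linear flow act boundedly on $U^2_\Phi$. The argument is at the level of atoms: if $\phi=\sum_j \1_{[t_{j-1},t_j)}e^{-it\Delta}\phi_j$, then $m(D)\phi=\sum_j \1_{[t_{j-1},t_j)}e^{-it\Delta}m(D)\phi_j$. This is $\lVert m\rVert_{L^\infty}$ times an atom, and radiality is preserved when $m$ is radial. Hence $\lVert m(D)\phi\rVert_{U^2_\Phi}\le \lVert m\rVert_{L^\infty}\lVert\phi\rVert_{U^2_\Phi}$, and the same holds for the Klein--Gordon flows.

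For $k\ge -10$ I will write $P_k\langle D\rangle^s u = m_k(D)P_k u$, where $m_k(\xi)=\langle\xi\rangle^s\tilde\rho_k(\xi)$. Since $\langle\xi\rangle\sim 2^{k_+}$ on the support, $\lVert m_k\rVert_{L^\infty}\lesssim 2^{ks}$ for $k\ge1$, and $\lVert m_k\rVert_{L^\infty}\lesssim 1$ for $-10\le k\le 0$. For the low-frequency piece I will write $P_{<-10}\langle D\rangle^s u=m(D)P_{<-10}u$ with a multiplier $m$ bounded by a constant. Summing the squares gives
\begin{align*}
  \lVert \langle D\rangle^s u\rVert_{\ell^2U^2_\Phi}^2\lesssim \lVert P_{<-10}u\rVert_{U^2_\Phi}^2+\sum_{k\ge -10}2^{2ks}\lVert P_ku\rVert_{U^2_\Phi}^2\lesssim \lVert u\rVert_{Z^s_\Phi}^2.
\end{align*}
Here the finitely many weights $2^{2ks}$ with $-10\le k\le 0$ are comparable to $1$ with constants depending only on $s$.

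The only points needing care are bookkeeping. First, the overlap between $P_k$ and $\tilde P_k$ is harmless. Second, the estimates are established on the dense class used to define $Z^s_\Phi$ and then extended to its completion by density. I expect no real obstacle; the content of the corollary lies entirely in Proposition \ref{prop:ellp}.
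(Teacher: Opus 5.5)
Your proposal is correct and follows the paper's route. The paper states the corollary as a direct consequence of Proposition \ref{prop:ellp} (giving the first inequality via $\ell^2U^2_\Phi\subset\ell^2V^2_\Phi\subset V^2_\Phi$) and of the definition of $Z^s_\Phi$ (giving the second), and your check that radial multipliers act on $U^2_\Phi$ atoms with norm at most $\lVert m\rVert_{L^\infty}$ is the routine detail the paper leaves implicit.
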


In particular, if $u\in Z^s_{\s}$ and $N\in Z^s_{\kg}$, by the definition of $V^2$ we conclude that the functions
\begin{align*}
    e^{it\Delta}u
    \quad\text{and}\quad
    e^{-it\langle D\rangle} N
\end{align*}
have $H^s_x$-limits as $t\to\infty$.
This observation will be a key step in the proof of scattering in Subsection \ref{mainproof}.

\section{Multilinear estimates}\label{section 4}
Our goal in this section is to find $ s,r\in\R $ such that the trilinear estimates
%%%% estimates without \overline N
\begin{align}
  \left\lvert \int_{\R^{1+3}} N u_{1}\overline u_{2}\dif x\dif t \right\rvert
  &\lesssim \lVert N \rVert _{ U_{ \kg }^{ 2,r } } \lVert u_{1} \rVert _{ U_{ \s}^{ 2,s } }
  \lVert u_{2} \rVert_{ V_{ \s }^{ 2,-s } },\label{eq:finalSchrodinger}\\
  \left\lvert \int_{\R^{1+3}} \langle D \rangle ^{-1}
    \left( u_{1}\overline u_{2}\right)\overline N \dif x \dif t\right\rvert
  &\lesssim \lVert N \rVert_{ V_{ \kg }^{ 2,-r } } \lVert u_{1} \rVert _{ U_{ \s }^{ 2,s } } \lVert u_{2} \rVert_{ U_{ \s }^{ 2,s } }\label{eq:finalKleinGordon},
\end{align}
holds for functions $N$, $u_1$ and $u_2$ localized at frequencies $\sim 2^k$, $2^{k_1}$ and $2^{k_2}$, respectively.

%%%%% estimates with \overline N
%\begin{align}
%  \left\lvert \int_{\R^{1+3}} N_{k}^{s}u_{1}\overline u_{k_{2}}\dif x\dif t \right\rvert
%  &\lesssim \lVert N_{k}^{s} \rVert _{ U_{ \pm \kg,k }^{ 2 } } \lVert u_{1} \rVert _{ U_{
%  \s, k_1}^{ 2 } }
%  \lVert u_{2} \rVert_{ V_{ \s, k_2 }^{ 2 } },\label{eq:finalSchrodinger}\\
%  \left\lvert \int_{\R^{1+3}} \langle D \rangle ^{-1}
%    \left( u_{1}\overline u_{2}\right)\bar N_{k} \dif x \dif t\right\rvert
%  &\lesssim \lVert N_{k} \rVert_{ V_{ \kg }^{ 2 } } \lVert u_{1} \rVert _{ U_{ \s,k_1
%  }^{ 2 } } \lVert u_{2} \rVert_{ U_{ \s }^{ 2 } }\label{eq:finalKleinGordon}
%\end{align}
%for frequency localized functions $N = \tilde P_{k}N$, $u_{i} = \tilde P_{k_{i}}u_{i}$,
%$s\in\left\{\pm\right\}$, and $N^{+} \defeq N$, $N^{-} \defeq \overline N$.

\subsection{Bilinear restriction estimates}
We record here the bilinear estimates that we will use to control certain resonant interactions. First, we briefly summarize the necessary curvature and transversality conditions from \cite{Ca} that need to be checked. We refer the interested reader to Corollary 1.6 and Lemma 2.1 (and the related discussions) in \cite{Ca} for more details.

Let $j=1,2$. Define phases  $\Phi_j$ for the flows and Fourier supports $\Lambda_j\subset\R^n$ for the two waves, respectively. For our applications, the $\Phi_j$ will be either Schrödinger or Klein-Gordon phases and the $\Lambda_j$ will be either balls at the origin or slight perturbations of dyadic annuli.

Define the quantities:
\begin{align*}
    \mathcal{V}_{max}&\defeq\sup_{\substack{\xi\in\Lambda_1\\\eta\in\Lambda_2}}\abs{\nabla\Phi_1(\xi)-\nabla\Phi_2(\eta)},
    \qquad\mathcal{H}_j\defeq\norm{\nabla^2\Phi_j}_{L^\infty(\Lambda_j)},
\end{align*}
and let $d_0>0$ be a constant which will be quantified more precisely below. The derivative gains and losses in the bilinear estimate are expressed in terms of these three parameters. For our applications, $\mathcal{V}_{max}$ will be the high frequency, $d_0$ will be the low frequency, and $\mathcal{H}_j\sim 1$.

We are now ready to state the key curvature and transversality hypotheses for the bilinear restriction estimate to hold:
\begin{enumerate}
    \item [\textbf{(A1')}] For every $\{j,k\}=\{1,2\}$, $\xi\in\Lambda_j$, $\eta\in\Lambda_k$, and $v\in\R^n$ with $v\cdot(\nabla\Phi_j(\xi)-\nabla\Phi_k(\eta))=0$, we have:
    \begin{align*}
        \abs{[\nabla^2\Phi_j(\xi)v] \wedge (\nabla\Phi_j(\xi)-\nabla\Phi_k(\eta))}\gtrsim\mathcal{H}_j\mathcal{V}_{max}\abs{v},
    \end{align*}
    \item [\textbf{(A2)}] For every $j=1,2$, we have $\Phi_j\in C^{5n}(\Lambda_j)$ and:
    \begin{align*}
        \min_{3\leq m\leq 5n}\left(\frac{\mathcal{H}_j}{\norm{\nabla^m\Phi_j}_{L^\infty(\Lambda_j)}}\right)^{\frac{1}{m-2}}\geq d_0, \qquad\frac{\mathcal{V}_{max}}{\mathcal{H}_j}\geq d_0.
    \end{align*}
\end{enumerate}
We remark that Condition \textbf{(A1')} is a local version of the more general global curvature and transversality hypothesis Condition \textbf{(A1)} from \cite{Ca}. By \cite[Lemma 2.1]{Ca}, the local condition implies the global condition in our setting so we just state the local version for simplicity.

The bilinear restriction estimate then reads:
\begin{thm}[Corollary 1.6 in \cite{Ca}]\label{thm:bilinearrange}
Let $n\geq 2$, $1\leq q,r\leq 2$, and $\frac{1}{q}+\frac{n+1}{2r}<\frac{n+1}{2}$. Define all the above objects and quantities and let $u\in U^2_{\Phi_1}$ and $v\in U^2_{\Phi_2}$ with $\mathcal{H}_2\leq\mathcal{H}_1$ and
\begin{align*}
    \on{supp}(\widehat{u})+B(0,d_0)\subset\Lambda_1, && \on{supp}(\widehat{v})+B(0,d_0)\subset\Lambda_2, && \min\{\on{diam}(\on{supp}(\widehat{u})), \on{diam}(\on{supp}(\widehat{v}))\}\leq d_0.
\end{align*}
Then, the following bilinear estimate holds:
\begin{align*}
    \norm{uv}_{L^q_tL^r_x(\R^{n+1})}\lesssim d_0^{n+1-\frac{n+1}{r}-\frac{2}{q}}\mathcal{V}_{max}^{\frac{1}{r}-1}\mathcal{H}_1^{1-\frac{1}{q}-\frac{1}{r}}\left(\frac{\mathcal{H}_1}{\mathcal{H}_2}\right)^{\frac{1}{q}-\frac{1}{2}}\norm{u}_{U^2_{\Phi_1}}\norm{v}_{U^2_{\Phi_2}}
\end{align*}
\end{thm}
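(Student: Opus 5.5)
The estimate is Corollary 1.6 of \cite{Ca}. Here I sketch how it is derived from Candy's main bilinear theorem and how one would reprove it under hypotheses \textbf{(A1')}--\textbf{(A2)}. The approach has two steps: reduce to free waves, then prove the free-wave estimate by a rescaling to a normalized setting followed by a wave-packet/induction-on-scales argument.

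\textbf{Step 1: reduction from $U^2$ to free waves.} Since $U^2_{\Phi_j}$ is atomic, a naive atom-by-atom argument only gives $U^1$ (or, via the transference principle, $U^q$ with $q\le\min\{q,r\}$). This is insufficient: the whole point of \cite{Ca} is to reach $U^2$ in the full range $\frac1q+\frac{n+1}{2r}<\frac{n+1}{2}$, including $q<2$. I would therefore not use plain transference. Instead I would follow Candy's strategy. First prove the estimate for free waves $e^{it\Phi_1(D)}f$, $e^{it\Phi_2(D)}g$ in the smaller exponent range, with $L^2$ norms on the right. Then upgrade to $U^2$ by decomposing each $U^2$ atom according to the number of jumps on dyadic time scales, and use the following observation: on a time interval of length $T$, the bilinear interaction of transversal waves is localized to $\lesssim$ one wave-packet crossing. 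This gives an extra gain in $T$ which sums the $\ell^2$ jump structure. This is the part I expect to be the main obstacle. It requires a refined free-wave estimate with a localized-in-time gain, i.e.\ an estimate on the interaction of wave packets in a space-time cube of size $d_0^{-1}$ relative to transversality, and a careful square-sum over atoms, without losing the $\epsilon$ room in the exponents.

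\textbf{Step 2: scaling.} Replace $\xi\mapsto d_0\xi$ and $t\mapsto t/(\mathcal H_1 d_0^2)$ so that the smaller Fourier support has diameter $\sim1$ and $\mathcal H_1\sim1$. Hypothesis \textbf{(A2)} ensures that all higher derivatives $\nabla^m\Phi_j$ with $3\le m\le 5n$ are $O(1)$ after rescaling, and that the relative velocity is $\mathcal V_{\max}/(\mathcal H_1 d_0)\gtrsim1$. Tracking the Jacobians in $L^q_tL^r_x$ and in $L^2$ produces exactly the prefactor $d_0^{n+1-\frac{n+1}{r}-\frac2q}\mathcal V_{\max}^{\frac1r-1}\mathcal H_1^{1-\frac1q-\frac1r}(\mathcal H_1/\mathcal H_2)^{\frac1q-\frac12}$. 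The last factor comes from normalizing the second, less curved phase.

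\textbf{Step 3: the free estimate in normalized form.} Here I would invoke the Tao/Lee--Vargas bilinear restriction machinery, i.e.\ induction on scales with wave packets adapted to $\Phi_j$. Hypothesis \textbf{(A1')} is used in the form given by \cite[Lemma 2.1]{Ca}: it upgrades to the global transversality condition \textbf{(A1)}, ensuring that the characteristic surfaces are transversal with the correct quantitative curvature in directions orthogonal to the relative velocity. This yields $L^q_tL^r_x$ bounds for $\frac1q+\frac{n+1}{2r}<\frac{n+1}{2}$. Interpolating with the trivial $L^\infty_tL^1_x$ and $L^1_tL^1_x$-type energy bounds (the latter being localized) then fills the stated range.

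Combining Steps 1--3 gives the theorem.
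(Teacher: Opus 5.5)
As a citation, your proposal matches the paper. The paper gives no proof of this theorem: it quotes \cite[Corollary 1.6]{Ca} verbatim, and when it applies the result in Proposition~\ref{thm:candybilinear} it only checks \textbf{(A1')}--\textbf{(A2)} and the support conditions. The reproof sketch you attach, however, does not stand on its own. Step 1, the passage from free waves to $U^2$ when $q<2$, is the genuinely new ingredient of \cite{Ca}. You describe it only heuristically, and it relies on a localized-in-time refined free estimate that Steps 2--3 never provide. Steps 2 and 3 each also contain a false claim.

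\textbf{Step 2.} Scaling cannot produce the whole prefactor. The dilations $x\mapsto\lambda x$, $t\mapsto\mu t$ act on $u$ and $v$ simultaneously, so you cannot ``normalize the second, less curved phase'' separately. Rescaling to $d_0=1$, $\mathcal{H}_1=1$ accounts exactly for the powers of $d_0$ and $\mathcal{H}_1$. It leaves two dimensionless parameters, $\mathcal{V}_{max}/(d_0\mathcal{H}_1)\geq 1$ and $\mathcal{H}_1/\mathcal{H}_2\geq 1$. The factors $(\mathcal{V}_{max}/(d_0\mathcal{H}_1))^{\frac1r-1}$ and $(\mathcal{H}_1/\mathcal{H}_2)^{\frac1q-\frac12}$ must therefore come out of the normalized estimate itself, uniformly in these parameters. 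That uniformity is the multi-scale content of \cite{Ca}, not a Jacobian computation.

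\textbf{Step 3.} Interpolation does not fill the range, for two reasons.
\begin{enumerate}
\item There is no global $L^1_tL^1_x$ bound to interpolate with. The point $(q,r)=(1,1)$ violates $\frac1q+\frac{n+1}{2r}<\frac{n+1}{2}$. Indeed, spatially wide, transversally crossing waves with unit $L^2$ norm overlap for time $\sim R$ and give $\|uv\|_{L^1_{t,x}}\sim R$. Localized versions lose powers of the localization scale.
\item Log-convexity between diagonal $L^p_{t,x}$ bounds ($\frac1p<\frac{n+1}{n+3}$) and $L^\infty_tL^1_x$ only reaches $\frac1q<\frac{n+1}{n+3}$. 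It misses, for example, $(\frac1q,\frac1r)=(\frac{7}{10},\frac12)$ when $n=2$, which lies in the stated range.
\end{enumerate}
The mixed-norm range has to come directly from running the induction on scales in $L^q_tL^r_x$.

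For this paper the citation is the entire argument. Anything beyond it has to be imported from \cite{Ca}; it cannot be derived along the lines you sketch.
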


%Given $\mathfrak{h}=(h,a)\in\R\times\R^n$ and $\{j,k\}=\{1,2\}$, define the surface:
%\begin{align*}
%    \Sigma_j(\mathfrak{h})\defeq\{\xi\in\Lambda_j\cap(h-\Lambda_k)\mid \Phi_j(\xi)+\Phi_k(h-\xi)=a\}
%\end{align*}

%\begin{lemma}[Lemma 2.1 in \cite{Ca}]
%Let $\{j,k\}=\{1,2\}$. Suppose that for all $\mathfrak{h}\in\R^{n+1}$ we have
%\begin{align*}
%    \on{ConvexHull}[\Sigma_j(\mathfrak{h})]\subset\Lambda_j\cap (h-\Lambda_k).
%\end{align*}
%If in addition we have the small variation bound
%\begin{align*}
%    \sup_{\xi,\xi'\in\Lambda_1}\abs{\nabla\Phi_1(\xi)-\nabla\Phi_1(\xi')}+\sup_{\eta,\eta'\in\Lambda_1}\abs{\nabla\Phi_2(\eta)-\nabla\Phi_2(\eta')}\lesssim\mathcal{V}_{max},
%\end{align*}
%and for all $\xi,\xi'\in\Sigma_j(\mathfrak{h})$
%\begin{align*}
%    \abs{\nabla\Phi_j(\xi)-\nabla\Phi_j(\xi')-\nabla^2\Phi_j(\xi)(\xi-\xi')}\lesssim\mathcal{H}_j\abs{\xi-\xi'},
%\end{align*}
%then the local assumption \textbf{(A1')} implies the global condition \textbf{(A1)}.
%\end{lemma}

\begin{prop}\label{thm:candybilinear}
Let $N_k$, $u_{k_1}$, and $u_{k_2}$ be defined as above. Then the following estimates hold:
\begin{enumerate}[label=\upshape(\roman*)]
    \item If $-3\leq k\leq 7$ and $k_1\prec k$ then:
    \begin{align}\label{eq:candybilinear 1}
        \norm{N_k u_{k_1}}_{L^{\frac{8}{5}}_t L^{\frac{3}{2}}_x}\lesssim 2^{\frac{k_1}{12}}\norm{N_k}_{U^2_{\kg}}\norm{u_{k_1}}_{U^2_{\s}}
    \end{align}

    \item If $k_1>-3$, $k\leq 10$, and $k\prec k_1$ then:
    \begin{align}\label{eq:candybilinear 2}
        \norm{N_k u_{k_1}}_{L^{\frac{8}{5}}_t L^{\frac{3}{2}}_x}\lesssim 2^{\frac{k}{12}-\frac{k_1}{3}}\norm{N_k}_{U^2_{\kg}}\norm{u_{k_1}}_{U^2_{\s}}
    \end{align}

    \item If $-3\leq k_1\leq 10$ and $k_2\prec k_1$ then:
    \begin{align}\label{eq:candybilinear 3}
        \norm{u_{k_1} u_{k_2}}_{L^{\frac{8}{5}}_t L^{\frac{3}{2}}_x}\lesssim 2^{\frac{k_2}{12}}\norm{u_{k_1}}_{U^2_{\s}}\norm{u_{k_2}}_{U^2_{\s}}
    \end{align}
\end{enumerate}
\end{prop}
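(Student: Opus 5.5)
The plan is to obtain all three estimates as direct applications of Theorem \ref{thm:bilinearrange} with $n=3$, $q=\frac85$, $r=\frac32$. This pair is admissible, since $\frac1q+\frac{n+1}{2r}=\frac58+\frac43=\frac{47}{24}<2$. With these exponents the constant in Theorem \ref{thm:bilinearrange} becomes
\begin{align*}
  d_0^{4-\frac83-\frac54}\,\mathcal V_{max}^{\frac23-1}\,\mathcal H_1^{1-\frac58-\frac23}\Big(\frac{\mathcal H_1}{\mathcal H_2}\Big)^{\frac18}
  = d_0^{\frac1{12}}\,\mathcal V_{max}^{-\frac13}\,\mathcal H_1^{-\frac7{24}}\Big(\frac{\mathcal H_1}{\mathcal H_2}\Big)^{\frac18}.
\end{align*}
In every case at hand the relevant Hessians satisfy $\mathcal H_j\sim 1$. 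For the Schrödinger phase $\nabla^2(\pm|\xi|^2)=\pm 2I$. For the Klein--Gordon phase $\nabla^2\langle\xi\rangle=\langle\xi\rangle^{-1}(I-\xi\xi^T\langle\xi\rangle^{-2})$, whose eigenvalues lie between $\langle\xi\rangle^{-3}$ and $\langle\xi\rangle^{-1}$; these are comparable to $1$ with absolute constants whenever $|\xi|\lesssim 2^{10}$. Hence the bound reduces to $d_0^{1/12}\mathcal V_{max}^{-1/3}$. In each case I label as $\Phi_1$ the wave with the larger Hessian so that $\mathcal H_2\le\mathcal H_1$; this only affects constants. I take $d_0=c\,2^{\text{low frequency}}$ for a suitable absolute $c$, so that the low-frequency Fourier support (a ball or annulus of radius $\sim 2^{k_{low}}$) has diameter $\le d_0$. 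I take $\Lambda_j$ to be the $d_0$-neighbourhoods of the two Fourier supports. Since the frequencies are separated by $\succ$ (a gap of at least $2^{10}$), these neighbourhoods remain disjoint dyadic-type regions.

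Case (i) has $-3\le k\le 7$ and $k_1\prec k$. The Klein--Gordon gradient $\xi/\langle\xi\rangle$ has size $\sim1$ on $|\xi|\sim 2^k$, while the Schrödinger gradient $2\xi$ has size $\lesssim 2^{k_1+1}\ll 2^{k}$. Hence $\mathcal V_{max}\sim1$, $d_0\sim 2^{k_1}$, and the bound is $2^{k_1/12}$.

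Case (ii) has $k\le10$, $k\prec k_1$, $k_1>-3$. Now the Schrödinger gradient has size $\sim2^{k_1}$ and dominates the Klein--Gordon gradient, which is at most $\min(1,2^{k+1})$. Hence $\mathcal V_{max}\sim 2^{k_1}$, $d_0\sim2^k$, and the bound is $2^{\frac k{12}-\frac{k_1}3}$.

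Case (iii) involves two Schrödinger waves, possibly one conjugated, which merely flips the sign of the phase. Here $\mathcal V_{max}\sim 2^{k_1}\sim 1$ and $d_0\sim 2^{k_2}$, giving $2^{k_2/12}$.

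Condition \textbf{(A2)} is routine. Derivatives of order $3\le m\le 15$ of $\langle\xi\rangle$ are bounded for all $\xi$, and those of $|\xi|^2$ vanish, so the first quantity in \textbf{(A2)} is $\gtrsim1\ge d_0$ once $c$ is small. Moreover $\mathcal V_{max}/\mathcal H_j\gtrsim 2^{k_{high}}\gtrsim d_0$ in each case.

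The main step is transversality \textbf{(A1')}. Let $w=\nabla\Phi_j(\xi)-\nabla\Phi_k(\eta)$. The separation of gradient sizes noted above gives $|w|\sim\mathcal V_{max}$ for all $\xi\in\Lambda_j$, $\eta\in\Lambda_k$. For $v\perp w$ I use that $H=\nabla^2\Phi_j(\xi)$ is definite with smallest absolute eigenvalue $\lambda\gtrsim 1$:
\begin{align*}
  |Hv\wedge w| = |w|\,\big|\mathrm{proj}_{w^\perp}Hv\big| \ge |w|\,\frac{|Hv\cdot v|}{|v|}\ge \lambda |v||w|\gtrsim \mathcal H_j\mathcal V_{max}|v|.
\end{align*}
The middle inequality holds because $v\in w^\perp$, so $|\mathrm{proj}_{w^\perp}Hv|\ge |Hv\cdot v|/|v|$.

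The points I expect to require care are the following. First, the constants must be uniform in $k$ as $k\to-\infty$ in case (ii) and in $k_1\to-\infty$ in case (i); this holds because the relevant Hessians are either exactly $\pm2I$ or evaluated at frequencies in a fixed compact range. Second, one must check that the gradient separation persists on the enlarged sets $\Lambda_j$ near the edge values $k_1\approx -3$ and $k\approx 10$; the $\succ$ gap of $10$ dyadic steps is what guarantees this. Third, as the paper notes, \textbf{(A1')} must be upgraded to the global condition \textbf{(A1)} via \cite[Lemma 2.1]{Ca}. I expect this to reduce to the convexity and small-variation hypotheses there, which hold because the gradient variation over each $\Lambda_j$ is $\lesssim\mathcal V_{max}$.
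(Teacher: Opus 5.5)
Your proposal is correct and follows essentially the same route as the paper: you apply Candy's Corollary~1.6 (Theorem~\ref{thm:bilinearrange}) with $(q,r)=(\frac85,\frac32)$, the same phases and Fourier regions, $d_0\sim 2^{k_{\mathrm{low}}}$, $\mathcal V_{max}\sim 1$ in (i) and $\sim 2^{k_1}$ in (ii)--(iii), and $\mathcal H_j\sim 1$. Beyond the paper's sketch, you make explicit the constant $d_0^{1/12}\mathcal V_{max}^{-1/3}$, the ordering $\mathcal H_2\le\mathcal H_1$ (which forces the Schr\"odinger phase to be $\Phi_1$, contrary to the paper's labeling), and a clean definiteness argument for \textbf{(A1')}.

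The one loose point, shared with the paper, is \textbf{(A2)}. The constant $c$ cannot be taken small, because $d_0$ must dominate the diameter of the low-frequency support. Moreover, in case (ii) with $k$ up to $10$ you have $d_0\sim 2^{k}$, while the \textbf{(A2)} quantity for $\langle\xi\rangle$ on a ball containing the origin is only $O(1)$ (its third derivative has size $\sim1$ near $|\xi|\sim1$). So you should first split the low-frequency piece into boundedly many pieces of small diameter. This costs only an absolute constant, since $k$ ranges over a bounded set there.
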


\begin{proof}
These estimates follow immediately from \cite[Corollary 1.6]{Ca} upon verifying that the necessary curvature and transversality conditions hold. We will just sketch these computations for the rest of this proof as they are generally very straightforward.
\begin{enumerate}[label=\upshape(\roman*)]
    \item Let $\Phi_1(\xi)=\inner{\xi}$ and $\Phi_2(\xi)=\abs{\xi}^2$. Fix $k$ and $k_1$, let $\Lambda_1=\{\abs{\xi}\sim 2^k\}+B(0,2^{k_1})$, and let $\Lambda_2=\{\abs{\xi}\lesssim 2^{k_1}\}$.

    We set $d_0=2^{k_1}$ and we note that the support condition clearly holds and we have, uniformly in $k$ and $k_1$, the estimates for $\mathcal{V}_{max}$ and $\mathcal{H}_j$:
    \begin{align*}
        \mathcal{V}_{max}&=\sup_{\substack{\xi\in\Lambda_1\\\eta\in\Lambda_2}}\abs{\frac{\xi}{\inner{\xi}}-2\eta}\sim 1\\
        \mathcal{H}_j&\sim 1, \quad j=1,2
    \end{align*}
    which leads to the corresponding derivatives on the right hand side of the estimate.

    Condition \textbf{(A1')} follows from the fact that the Hessian for both phase functions in these regions is either equal to or is approximately equal to the identity. Condition \textbf{(A2)} follows immediately from the fact that the higher order derivatives for these phases are either zero or essentially zero.

    \item Use the same phases as in part (i) but let $\Lambda_1=\{\abs{\xi}\lesssim 2^k\}$ and let $\Lambda_2=\{\abs{\xi}\sim 2^{k_1}\}+B(0,2^k)$. Set $d_0=2^k$ and note that $\mathcal{V}_{max}\sim 2^{k_1}$. The verification of Assumption 1.1 follows a very similar computation as in the first case.

    \item Let $\Phi_1(\xi)=\Phi_2(\xi)=\abs{\xi}^2$, let $\Lambda_1=\{\abs{\xi}\sim 2^{k_1}\}+B(0,2^{k_2})$, and let $\Lambda_2=\{\abs{\xi}\lesssim 2^{k_2}\}$. Set $d_0=2^{k_2}$ and note that $\mathcal{V}_{max}\sim 2^{k_1}$. The verification of Assumption 1.1 again follows a very similar computation as in the first case.
\end{enumerate}
\end{proof}

\subsection{Trilinear estimates} 

The rest of the paper is dedicated to the proofs of (\ref{eq:finalSchrodinger}) and
(\ref{eq:finalKleinGordon}).
For each nonlinearity, we split our proof into cases depending on the relationship between
$k$, $k_1$, and $ k_2 $. 
We organize the different cases in Figures (\ref{fig:schrodinger}) and
(\ref{fig:kleingordon}) by color, according to which approach will be used.
Effectively, we consider three distinct regimes:
\begin{itemize}
  \item Nonresonant regimes (cf. Lemmas \ref{lemma:sch nonresonance} and \ref{lemma: kg nonresonance}) are colored blue and comprise cases (I) and (III.1) in Figures
    \ref{fig:schrodinger} and \ref{fig:kleingordon}.
    This case corresponds to bilinear  $ L_{ t,x }^{ 2 }  $-estimates, and we use the modulation structure $ \dot X^{ 0,\frac{1}{2} }  $.

  \item The gold regions correspond to regimes where we use the extended radial Strichartz range.
    This corresponds to cases (II) and (IV.1), 
    in Figure \ref{fig:schrodinger}, and case (II) in Figure \ref{fig:kleingordon}. 

  \item Transversal cases are colored yellow. 
    In this regime the interacting waves in the nonlinearity are transversal.
    Geometrically, this means that the waves interact for less time, and we have access to
    stronger bilinear restriction estimates.
    This is especially useful in some low frequency regimes, 
    as it avoids derivative losses in both the Schrödinger and Klein-Gordon components.
    This case corresponds to regions (III.2) and (IV.2)
    in Figure \ref{fig:schrodinger}, and case (III.2) in Figure \ref{fig:kleingordon}. 
\end{itemize}

\begin{figure}[ht]
    \centering
    \begin{subfigure}{.5\textwidth}
        \centering
          \tiny
          \def\svgwidth{.97\textwidth}
          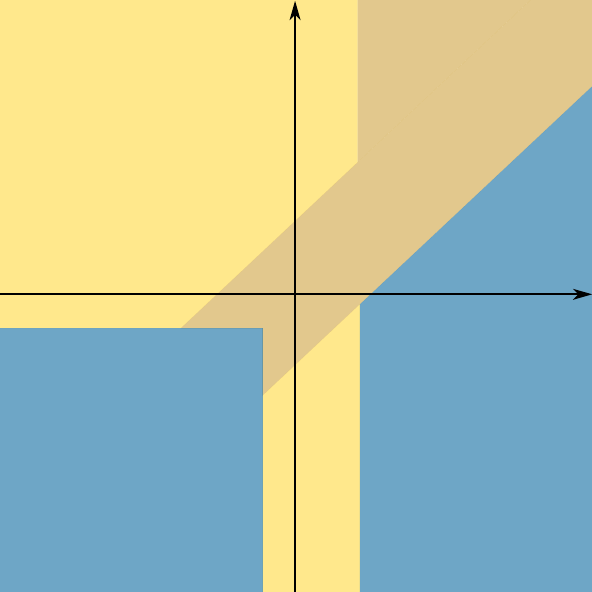
          \vspace{2pt}
          \caption{}
         \label{fig:schrodinger}
    \end{subfigure}%
    \begin{subfigure}{.5\textwidth}
        \centering
          \tiny
          \def\svgwidth{.97\textwidth}
          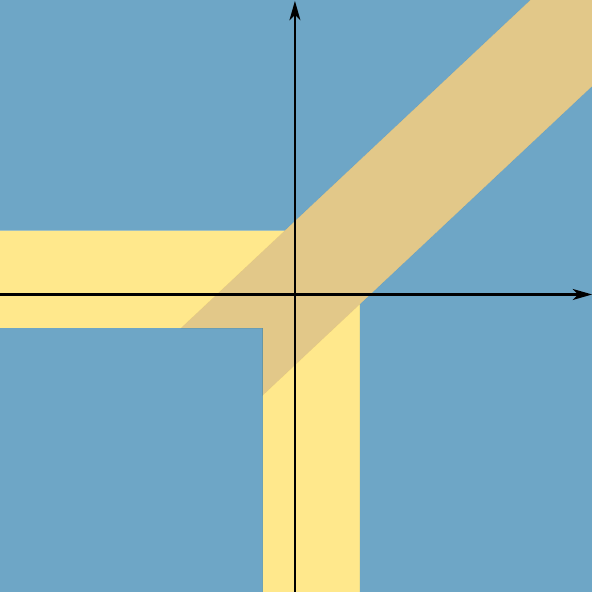
          \vspace{2pt}
          \caption{}
         \label{fig:kleingordon}
    \end{subfigure}%
    \caption{A visual breakdown of the cases in Propositions \ref{trilinearS}} and
    \ref{trilinearKG}, respectively.
\end{figure}

In what follows we adopt the $ + $ sign convention on the Klein-Gordon flow, simply writing
$ U_{ \kg }^{ 2 }$, which does not alter the trilinear estimates.
Similarly, we do not differentiate between $ N $ and $ \overline N $ in the nonlinear
analysis and always write $ N $, as the difference will be irrelevant in our estimates.

\subsection{Trilinear estimates -- Schrödinger nonlinearity}

\begin{prop}\label{trilinearS}
  Let $ \epsilon> 0 $ be sufficiently small, and let $N$, $u_1$ and $u_2$ be functions localized at frequency $\sim 2^k$, $2^{k_1}$ and $2^{k_2}$.  
  Then,
  \begin{align*}
    \left\lvert \int_{\R^{1+3}} Nu_{1}u_{2}\dif x\dif t \right\rvert
    \lesssim_{ \epsilon} \min \left\{ 1,2^{ \left( -\frac{1}{2} +\frac{\epsilon}{2} \right) k }
    \right\}  
    \lVert N \rVert_{U_{ \kg }^{ 2 } } \lVert u_{1} \rVert _{ U_{ \s }^{ 2 } } \lVert
    u_{2} \rVert _{V_{ \s }^{ 2 } }.
  \end{align*}
  When $ k < -10$ the same estimate holds when $u = \tilde P_{< -10} u$; similarly for $ k_1 $ and $ k_2 $.
\end{prop}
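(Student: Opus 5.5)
Since the integral vanishes unless the Fourier supports are compatible, two of $k,k_1,k_2$ are comparable and the third is at most their common size (up to the $\prec$ convention). We split into cases according to the figure for the Schrödinger nonlinearity: nonresonant (I), (III.1); extended Strichartz (II), (IV.1); transversal (III.2), (IV.2). Throughout, $u_2$ lies only in $V^2_\s$. So we may only use Strichartz norms with $p>2$ on $u_2$ (Proposition \ref{properties}(iii)), or its $\dot X^{0,\frac12,\infty}_\s$ bound (Proposition \ref{properties}(ii)). Bilinear restriction estimates (Proposition \ref{thm:candybilinear}) will only be applied to the pair $(N,u_1)$ or to the pair $(u_1,u_2)$ with $u_2$ replaced by $U^2$-controlled pieces via the $V^2\subset U^p$ embedding. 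In practice I will arrange that the transversal pair is always $N u_1$.

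\textbf{Nonresonant cases.} If all frequencies are $\le -3$, or $k\sim k_1\succ k_2$, or $k\succ k_1$ with $k>7$, Lemma \ref{lemma:sch nonresonance} gives $|\Phi_\pm|\gtrsim 2^{2\max}$ (or $\gtrsim 1$). Then I decompose each factor by modulation as $Q_{\ge j}+Q_{<j}$ with $2^j\sim|\Phi|/8$. One factor must carry modulation $\gtrsim 2^j$. For that factor I use $\|Q_{\ge j}f\|_{L^2_{t,x}}\lesssim 2^{-j/2}\|f\|_{V^2}$, and I place the other two in $L^4_tL^\infty_x$ or $L^\infty_tL^2_x$ type Strichartz norms via Bernstein. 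Here Proposition \ref{properties}(i) makes the $Q_{<j}$ cutoffs disposable in $U^2$. For the $V^2$ factor I instead use the $\dot X^{0,\frac12,\infty}$ embedding together with the fact that $Q_{<j}$ is bounded on $L^\infty L^2$ for $V^2$ functions after a small loss. The gain $2^{-j/2}\sim 2^{-\max}$ easily beats the Bernstein losses and produces $2^{(-\frac12+\frac\epsilon2)k}$.

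\textbf{Strichartz and transversal cases.} In the high-high-to-low regime $k_1\sim k_2\succ k$ with $k\ge 0$ (wave-type $N$), I use Hölder with $N\in L^{p_W}L^{q_W}$ near the $L^2L^4$ wave endpoint (forbidden, hence the $\epsilon$ loss). I put $u_1\in L^{p}L^{q}$ in the extended radial Schrödinger range with $\sigma_S>0$, and $u_2\in L^{p'}L^{q'}$ with $p'>2$. For example, take $N\in L^{2+}L^{4+}$ with loss about $2^{(\frac14+)k}$, $u_1\in L^{4-}L^{4}$, and $u_2\in L^{4}L^{2}$-type. I tune the exponents so the total power is $2^{(\frac14+\epsilon')k-\theta k_1}$, which after using $k_1\ge k$ gives $2^{(-\frac12+\frac\epsilon2)k}$. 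This is the exponent bookkeeping using Table \ref{table}. For $k\sim k_1\succeq k_2$ (case IV.1), the symmetric Strichartz argument gains from $\sigma_S>0$ on the high $u_1$.

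For the low-frequency Klein-Gordon regime $k<0$ with $k\prec k_1\sim k_2$ (III.2), I use Proposition \ref{thm:candybilinear}(ii) for $Nu_1$ in $L^{8/5}L^{3/2}$ and $u_2\in L^{8/3}L^3$ ($p>2$, so allowed for $V^2$, $\sigma_S=\frac14$). This gives $2^{\frac k{12}-\frac{k_1}3-\frac{k_2}4}\lesssim1$, as in the introduction, for $k_1>-3$. For $-3\le k\le 7$ with $k_1\prec k$ (IV.2), I use part (i) in the same way, paired with $u_2$ at frequency $\sim 2^k$. Low frequencies below $-10$ are grouped with $\tilde P_{<-10}$ and are absorbed by the nonresonant case (i).

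\textbf{Main obstacle.} The main obstacle is the high-frequency high-high interaction with $k$ large, where I must reach the sharp decay $2^{(-\frac12+\frac\epsilon2)k}$. The forbidden $L^2L^4$ wave endpoint and the $V^2$ restriction on $u_2$ ($p>2$) force near-endpoint exponents, which is the source of the $\epsilon$ loss. I expect to interpolate between a near-endpoint Strichartz bound and the $\dot X^{0,\frac12}$ modulation bound, splitting $u_2$ into high- and low-modulation pieces if the direct Hölder count falls short.
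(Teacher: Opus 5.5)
\textbf{Gap 1: the resonant high-frequency Strichartz cases.} Your case split matches the paper's: modulation decomposition in the nonresonant regimes, Proposition~\ref{thm:candybilinear} with $u_2\in L^{8/3}L^3$ in the low-frequency transversal regimes, and near-endpoint Strichartz elsewhere. The gap is the exponent bookkeeping in the resonant high-frequency cases $k\sim k_1\sim k_2$ and $k_1\sim k_2\succ k\geq 0$. That is where the decay $2^{(-\frac12+\frac\epsilon2)k}$ has to come from.

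Take $N\in L^pL^q$ wave-admissible and $u_1,u_2$ at frequency $\sim 2^{k_1}$. The H\"older constraints $\sum_i 1/p_i=\sum_i 1/q_i=1$ force the product of Strichartz constants to be
\[
  2^{-\sigma_W(p,q)k-\sigma_S(p_1,q_1)k_1-\sigma_S(p_2,q_2)k_2}
  \sim 2^{(\frac32-\frac1p-\frac3q)k+(-2+\frac2p+\frac3q)k_1},
\]
however you distribute exponents between $u_1$ and $u_2$.
\begin{itemize}
  \item With $N$ near the wave endpoint $L^2L^4$ this equals $2^{\frac14(k-k_1)}$, which is the Zakharov-type bound (\ref{eq:multizakharov}). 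So your $\theta$ is $\frac14$, not $\geq\frac34$.
  \item Consequently there is no decay when $k_1\sim k$, and in the high-high-to-low case you reach $2^{-k/2}$ only if $k_1\geq 3k$.
  \item Your sample triple is also inadmissible. $\sigma_S(4,4)=-\frac14$ is a loss, and $L^4_tL^2_x$ is not a Strichartz space, since $\lVert u(t)\rVert_{L^2}$ is conserved.
\end{itemize}
The formula shows you need $1/p\to 0$. Put $N$ essentially in $L^\infty_tL^2_x$, $u_1$ near the excluded Schr\"odinger endpoint $L^2L^{10/3}$ (so $\sigma_S\to\frac25$), and $u_2$ near $L^2L^5$ (so $\sigma_S\to\frac1{10}$) with $p_2>2$, as in (\ref{eq:toy})--(\ref{eq:endpointperturb}). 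This gives $2^{\epsilon k-\frac{1+\epsilon}{2}k_1}$. The $\epsilon$-loss comes from the excluded pair $(2,\frac{10}{3})$ and from the restriction $p_2>2$ for $V^2$, not from the wave endpoint. Splitting $u_2$ by modulation cannot replace this, because $\Phi_\pm$ vanishes in these configurations.

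\textbf{Gap 2: one nonresonant piece is borderline.} Take $k\succ k_1$ with $k>7$, so $k_2\sim k$, and the piece where the modulation $\gtrsim 2^{2k}$ sits on the low-frequency factor $u_1$.
\begin{itemize}
  \item You gain $2^{-k}$ from $\lVert Q_ju_1\rVert_{L^2_{t,x}}$.
  \item But $N$ and $u_2$, both at frequency $2^k$, must have time exponents summing to $\frac12$ with $p_2>2$. The same scaling count gives a loss of $2^{(1-\frac1{p_2})k}$, hence only $2^{-k/p_2}$ overall.
  \item The norms you propose fail here. $L^2_{t,x}\times L^4L^\infty\times L^4L^\infty$ violates H\"older in $x$. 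The choice $u_2\in L^\infty L^2$, $N\in L^2L^\infty$ costs $2^{k}$ (since $\sigma_W(2,\infty)=-1$), which nets $O(1)$.
\end{itemize}
You must again perturb off the endpoint, taking $N\in L^{2(2+\delta)/\delta}_tL^3_x$ and $u_2\in L^{2+\delta}_tL^6_x$ as in (\ref{eq:hhL1L2}). This is a second source of the $\epsilon$. The remaining nonresonant pieces and the transversal cases go through as you describe.
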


Before proceeding with the proof, we discuss a technical issue related to our choices of
Lebesgue space exponents.
Throughout the proof of Proposition \ref{trilinearS} we will often use Hölder's inequality
with mixed $ L_{ t }^{ p }L_{ x }^{ q }  $-spaces
\begin{align}\label{eq:toygeneral}
  \left\lvert \int_{ \R^{ 1+3 } } N u_1 u_2 \dif x\dif t  \right\rvert 
  &\lesssim \lVert N \rVert  _{ L_{ t } ^{ ^{ p }  }L_{ x }^{ q }   }
  \lVert u_1 \rVert  _{ L_{ t }^{ p_1 } L_{ x }^{ q_1 }  } \lVert u_2 \rVert  _{ L_{ t }^{
  p_2 } L_{ x }^{ q_2 } }
\end{align} 
for various combinations of admissible exponents $ (p,q)$, $(p_1,q_1) $, and $(p_2,q_2)$.
By Proposition \ref{properties}, these spaces can then be embedded into $ U^{ 2 }  $ and $
V^{ 2 } $ at the cost of dyadic powers depending on the frequencies $ k,k_1,k_2 $.  
In many cases, the optimal choice of exponents that minimizes the coefficient on the
right-hand side of (\ref{eq:toygeneral}) is based on the heuristic endpoint Strichartz combination
\begin{align}\label{eq:toy}
  \left\lvert \int_{ \R^{ 1+3 } } N u_1 u_2 \dif x\dif t  \right\rvert 
  &\lesssim \lVert N \rVert  _{ L_{ t } ^{ ^{ \infty }  }L_{ x }^{ 2 }   }
  \lVert u_1 \rVert  _{ L_{ t }^{ 2 } L_{ x }^{ \frac{10}{3} }  } \lVert u_2 \rVert  _{ L_{ t }^{
  2} L_{ x }^{ 5 } }
\end{align} 
%\begin{align*}
%  \left( p,q \right) = \left( \infty,2 \right),
%  \quad
%  \left( p_1,q_1 \right) = \left( 2,\frac{10}{3}  \right),
%  \quad\text{and}\quad 
%  \left( p_2,q_2 \right) = (2,5)
%\end{align*} 
which, ideally, would embed into $ U^{ 2 }  $ and $ V^{ 2 }  $ with a gain
\begin{align*}
  \left\lvert \int_{ \R^{ 1+3 } } N u_1 u_2 \dif x\dif t  \right\rvert 
  &\lesssim 2^{ -\frac{2k_1}{5} -\frac{k_2}{10}  } 
    \lVert N \rVert_{U_{ \kg }^{ 2 } } \lVert u_{1} \rVert _{ U_{ \s,k_1 }^{ 2 } } \lVert
    u_{2} \rVert _{V_{ \s }^{ 2 } }.
\end{align*} 
When $ k_1\sim k_2\geq 2 $, this gain corresponds to the estimate for $ \epsilon = 0 $ in
Proposition \ref{trilinearS}, which is the missing regularity endpoint in Theorem \ref{mainthm}.
However, this choice fails for two reasons:
the endpoint Strichartz estimate for $ (2,10 /3) $ fails even at the level of free
solutions, see Proposition \ref{linearstructure}. Moreover, choosing $ p_2 = 2 $ prevents us from controlling $ u_2 $ in $ V^{ 2 }$, since the embedding $V^2_{\Phi}\subset L_t^p L_x^q$ fails at $p = 2$ for both the Schrödinger and Klein-Gordon flows $\Phi = \Delta$ and $\Phi =\kg$.
We overcome these shortcomings by choosing Strichartz exponents close to the endpoint, but
still inside the admissible range, at the cost of incurring $ 2 ^{ \epsilon k}$-losses in
Proposition \ref{trilinearS} and the main theorem (\ref{mainthm}). See (\ref{eq:endpointperturb}) for an example.

\begin{proof}[Proof of Proposition \ref{trilinearS}]
  We split the proof into different cases depending on the relationship between $k,k_1,k_2$.
  Fix an absolute frequency cutoff $c$, which we set as $c = -3$.
  
  %%%%%%%%%%%%%%%%%%%%%%% CASE 1 %%%%%%%%%%%%%%%%%%%%%%%
  \textbf{Case I}: $\max\left\{k,k_1,k_2\right\}< c$.

  By Lemma (\ref{lemma:sch nonresonance}), we have $\lvert \Phi_{\pm}(\xi,\eta) \rvert\sim 1$, which means that all interactions produce modulations at scale $\gtrsim 1$. We will use this same fact repeatedly throughout the argument. So, we decompose
  \[
    I \defeq \int Nu_1 u_2\dif x\dif t = I_0 + I_1 + I_2,
  \] 
  where 
  \begin{align*}
    &I_0 = \sum_{j\gtrsim 1} \int Q_{ j} N Q_{\leq j}u_1 Q_{\leq j}u_{ 2}\dif x \dif t, \\
    &I_1 = \sum_{j\gtrsim 1} \int Q_{< j} N Q_{ j}u_1 Q_{< j}u_{ 2}\dif x \dif t, \\
    &I_2 = \sum_{j\gtrsim 1} \int Q_{< j} N Q_{< j}u_1 Q_{ j}u_{2}\dif x \dif t.
  \end{align*}
  Note that we are slightly abusing notation here as we should add a superscript to the modulation cutoff $Q_{j}$ to indicate
  whether it refers to the modulation with respect to the paraboloid $Q_{j}^{\s}$ or the
  Klein-Gordon characteristic surface $Q_{j}^{\kg}$. For simplicity, we omit the notation by
  assuming that $Q_{j}N = Q_{j}^{\kg}N$ and $Q_{j}u_{i} = Q_{j}^{\s}u_{i}$, $i
  =1,2$.

  Then, by Bernstein's inequality,
  \begin{align}
    \lvert I_0 \rvert 
    &\lesssim \sum_{j\geq 1} \lVert Q_{j}N \rVert _{L_{t,x}^{2}} \lVert Q_{\leq j}u_1 \rVert
    _{L_{t,x}^{4}} \lVert Q_{\leq j} u_2 \rVert _{L_{t,x}^{4}}\nonumber\\
    &\lesssim 2^{ \frac{3}{20} \left( k_1 + k_2 \right)  }\sum_{j\geq 1}
    2^{-\frac{j}{2}+\frac j{10}}\lVert Q_{j}N \rVert _{\dot X^{0,\frac{1}{2}}}
    \lVert Q_{\leq j}u_1 \rVert _{L_{t,x}^{\frac{10}{3}}} 
    \lVert Q_{\leq j} u_2 \rVert _{L_{t,x}^{\frac{10}{3}}}\label{eq:ll}\\
    &\lesssim 2^{ \frac{3}{20} \left( k_1 + k_2 \right)  } \lVert N \rVert _{ U_{ \kg }^{
    2 } } \lVert u_1 \rVert _{ U_{ \s}^{ 2 } }\lVert u_{2} \rVert_{ V_{ \s }^{ 2 }
    }\nonumber
  \end{align}
  Similarly, by Hölder and Bernstein's inequalities,
  \[
    \lvert I_1 \rvert \lesssim \sum_{j\geq 1}2^{-\frac{j}{2}} \lVert Q_{j}u_1
    \rVert_{L_{t,x}^{2}}
    \lVert Q_{< j}N \rVert _{L_{t,x}^{4}}\lVert Q_{< j}u_2 \rVert _{L_{t,x}^{4}}
    \lesssim 2^{ \frac{k_2}{20}  } \lVert N \rVert _{U_{ \kg }^{ 2 } }
    \lVert u_1 \rVert _{ U_{\s }^{2  } }\lVert u_2 \rVert
    _{ V_{ \s }^{ 2 } }.
  \] 
  The bound for $I_2$ is analogous.
  
  In all the remaining cases we will implicitly assume that $ \med (k,k_1,k_2) \sim \max
  (k,k_1,k_2) $ and that $ \max\left\{ k, k_1, k_2 \right\} \gtrsim c $.

  %%%%%%%%%%%%%%%%%%%%%% CASE 2 %%%%%%%%%%%%%%%%%%%%%%%
  \textbf{Case II}: $\lvert k-k_1 \rvert \leq 10$.
    We split this regime into two further subcases based on the size of $k_2$ (not
    pictured in Figure \ref{fig:schrodinger}) in order to take advantage of the nonresonance
    when $k_2$ is much smaller than $k$ and $k_1$.

    \emph{Case II.1}: If $k\sim k_1\sim k_2$ then we use the increased radial Strichartz range to estimate
%    \begin{align}
%        \abs{I(\textbf{k})}\lesssim \norm{N}_{L^{\infty-}_t L^{2+}_x}\norm{u_1}_{L^{2}_t L^{\frac{10}{3}+}_x}\norm{u_2}_{L^{2+}_t L^{5-}_x}\lesssim 2^{-\frac{k}{2}+}\norm{N}_{U^2_{\kg,k}}\norm{u_1}_{U^2_{\s,k_1}}\norm{u_2}_{V^2_{\s,k_2}}
%    \end{align}
    % New estimate to be
    \begin{nalign}\label{eq:endpointperturb}
        \abs{I}
        &\lesssim \norm{N}_{L^{p}_t L^{q}_x}\norm{u_1}_{L^{p_1}_t
          L^{q_1}_x}\norm{u_2}_{L^{p_2}_t L^{q_2}_x}\\
        &\lesssim 2^{ \epsilon k - \frac{ (8-3\epsilon) }{ 2(10 + 3\epsilon) }k_1
          -\frac{ (2+16\epsilon + 3\epsilon^2 )}{ 2(10+3\epsilon)  }k_2 } 
          \norm{N}_{U^2_{\kg}}\norm{u_1}_{U^2_{\s}}\norm{u_2}_{V^2_{\s}}\\
        &\lesssim
          2^{\left( -\frac{1}{2} +\frac{\epsilon}{2} \right)k}
          \norm{N}_{U^2_{\kg}}\norm{u_1}_{U^2_{\s}}\norm{u_2}_{V^2_{\s}}\\
    \end{nalign}
    where the choice of exponents below is a small perturbation of the heuristic
    estimate (\ref{eq:toy}) which moves the endpoint spaces into the admissible region (see
    Figure \ref{fig:Strichartz}):
    \begin{align*}
      \left( p,q \right) = \left( \frac{2}{\epsilon}, \frac{2}{1-\epsilon}  \right),
      \,
      \left( p_1,q_1 \right) = \left(2,\frac{10}{3} +\epsilon\right) ,
      \,\text{and}\,
      \left( p_2,q_2 \right) = \left( \frac{2}{1-\epsilon}, \left( \frac{1}{2} +
      \frac{\epsilon}{2} -\frac{ 3 }{ 10+3\epsilon } \right)^{-1} \right).
    \end{align*} 

    \emph{Case II.2}: If $k\sim k_1\succ k_2$ we use the nonresonance to treat this estimate similarly to Case I. In particular, by Lemma \ref{lemma:sch nonresonance} we have $\abs{\Phi_{\pm}(\xi,\eta)}\gtrsim 2^{2k}$ and so we decompose:
    \begin{align*}
        I \defeq \int Nu_1 u_2\dif x\dif t = I_0 + I_1 + I_2,
    \end{align*}
    where
    \begin{align*}
      &I_0 = \sum_{j\gtrsim 2k} \int Q_{ j} N Q_{\leq j}u_1 Q_{\leq j}u_{ 2}\dif x \dif t, \\
      &I_1 = \sum_{j\gtrsim 2k} \int Q_{< j} N Q_{ j}u_1 Q_{< j}u_{ 2}\dif x \dif t, \\
      &I_2 = \sum_{j\gtrsim 2k} \int Q_{< j} N Q_{< j}u_1 Q_{ j}u_{2}\dif x \dif t.
    \end{align*}
    For $I_0$ we can use the same estimate as Case I to get:
    \begin{align*}
        \abs{I_0}&\lesssim \sum_{j\gtrsim 2k} \norm{Q_jN}_{L^2_{t,x}} \norm{Q_{\leq j}u_1}_{L^4_{t,x}} \norm{Q_{\leq j}u_2}_{L^4_{t,x}}\\
        &\lesssim 2^{\frac{3}{20}(k_1+k_2)} \sum_{j\gtrsim 2k} 2^{-\frac{j}{2}+\frac{j}{10}}\norm{Q_j N}_{\dot{X}^{0,\frac{1}{2}}} \norm{Q_{\leq j}u_1}_{L^{\frac{10}{3}}_{t,x}} \norm{Q_{\leq j}u_2}_{L^{\frac{10}{3}}_{t,x}}\\
        &\lesssim 2^{\frac{3}{20}(k_1+k_2)}2^{-k+\frac{k}{5}} \norm{N}_{U^2_{\kg}} \norm{u_1}_{U^2_{\s}} \norm{u_2}_{V^2_{\s}}\\
        &\leq 2^{-\frac{k}{2}} \norm{N}_{U^2_{\kg}} \norm{u_1}_{U^2_{\s}} \norm{u_2}_{V^2_{\s}}
    \end{align*}
    For $I_1$, we perturb around the sharp endpoint combination $ (p,q) = (\infty,3) $,
    $(p_2,q_2) = (2,6)$ obtaining
    \begin{nalign}\label{eq:hhL1L2}
        \abs{I_1}
        &\lesssim \sum_{j\gtrsim 2k} \norm{Q_{\leq j}N}_{L^{\frac{ 2(2+\delta) }{ \delta } }_t L^3_x}
          \norm{Q_{j}u_1}_{L^2_{t,x}} \norm{Q_{\leq j}u_2}_{L^{2 + \delta}_t L^6_x}\\
        &\lesssim \sum_{j\gtrsim 2k} 2^{-\frac{j}{2}} \norm{Q_{\leq j}N}_{L^{ \frac{
          2(2+\delta) }{ \delta } }_t L^3_x} \norm{Q_{j}u_1}_{\dot{X}^{0,\frac{1}{2}}}
          \norm{Q_{\leq j}u_2}_{L^{2+\delta}_t
          L^6_x}\\
        &\lesssim 2^{-k + \frac{1}{2+\delta}k +\frac{ \delta }{ 2+\delta }k_2 }
          \norm{N}_{U^2_{\kg}} \norm{u_1}_{U^2_{\s}} \norm{u_2}_{V^2_{\s}}
    \end{nalign}
    where the coefficient satisfies
    \begin{align*}
      2^{-k + \frac{1}{2+\delta}k +\frac{ \delta }{ 2+\delta }k_2 }
      \lesssim 2^{ -\frac{1}{2+\delta} k } 
      \lesssim 2^{ \left( -\frac{1}{2} +\frac{\epsilon}{2}  \right) k } 
    \end{align*} 
    provided that $0 < \delta \leq \frac{ 2\epsilon }{ 1-\epsilon }  $ and $ k\succ k_2 $.
    For $I_2$ we can repeat the same estimate with $ \delta = 0 $.
    %Thus, in both cases, we get the estimate:
    %\begin{align}
    %  \lvert I ( \textbf{k} )\rvert 
    %  \lesssim \lVert u_2 \rVert _{ L^{ \infty } L^{ 2 } } 
    %  \lVert N \rVert _{ L^{2} L^{p(-\epsilon) }}
    %  \lVert u_1 \rVert _{ L^{ 2 } L^{ p(\epsilon) }  }
    %   \lesssim 2^{-\frac{k}{2}+} \lVert N \rVert
    %   _{ U_{ \kg }^{ 2 } } \lVert u_1 \rVert
    %   _{ U_{ \s }^{ 2 } } \lVert u_2 \rVert _{ V_{ \s }^{ 2 } },
    %\end{align}
    %which is acceptable.

  %%%%%%%%%%%%%%%%%%%%%%% CASE 3 %%%%%%%%%%%%%%%%%%%%%%%
  \textbf{Case III}: $ k \succ k_1$.
  \emph{Case III.1}: If $ k>7 $, due to  Lemma \ref{lemma:sch nonresonance} $I$ only has nontrivial
  contributions when $j\gtrsim 2k$, and we directly estimate
  \begin{align*}
    \lvert I_{2} \rvert 
    &\lesssim \sum_{j\gtrsim 2k}  \lVert Q_{\leq j}N \rVert _{L^{\infty}_{t}L^{3}_{x}}
      \lVert Q_{\leq j} u_{1} \rVert_{L^{2}_{t}L_{x}^{6}}\lVert Q_{ j } u_{2}
      \rVert_{L^{2}_{t,x}} \\
     &\lesssim \sum_{j\gtrsim 2k}2^{-\frac{j}{2}} \lVert Q_{\leq j}N \rVert _{L^{\infty}
       L^{3}} \lVert Q_{\leq j}u_{1} \rVert _{L^{2}L^{6}}\lVert Q_{ j } u_{2} \rVert_{\dot
       X^{0,\frac{1}{2}}}\\
     &\lesssim \sum_{j\gtrsim 2k}2^{ \frac{1}{2} (k-j)}        \lVert Q_{\leq j}N \rVert
       _{L^{\infty}L^{2}} \lVert Q_{\leq j}u_{1} \rVert _{L^{2}L^{6}}\lVert Q_{ j } u_{2}
       \rVert_{\dot X^{0,\frac{1}{2}}}\\
    &\lesssim 2^{ -\frac{k}{2} } \lVert N \rVert _{ U_{ \kg }^{ 2 } }
    \lVert u_{1} \rVert _{ U_{ \Delta }^{ 2 } } 
    \lVert u_2 \rVert _{ V_{ \Delta }^{ 2 } },
  \end{align*}
  and $ I_0 $ can be estimated using the same spaces by switching the roles of $ N  $ and 
  $ u_1$.
  Estimating $ I_{ 1 } $ as in (\ref{eq:hhL1L2}), we obtain
  \begin{align*}
    \lvert I_{ 1 } \rvert  
    &\lesssim 2^{\left( -\frac{1}{2} + \frac{\epsilon}{2} \right)k  }
          \norm{N}_{U^2_{\kg}} \norm{u_1}_{U^2_{\s}} \norm{u_2}_{V^2_{\s}}.
  \end{align*} 
  \emph{Case III.2}: If $-3\leq k\leq 7$, we use the bilinear restriction estimate
  (\ref{eq:candybilinear 1}),
  \begin{align*}
    \lvert I \rvert  
    & \lesssim  \lVert N u_1 \rVert  _{ L^{ \frac{8}{5} } L^{ \frac{3}{2} } } 
      \lVert  u_2 \rVert _{ L^{ \frac{8}{3} } L^{ 3 } } 
      \lesssim 2^{\frac{k_1}{12}-\frac{k_2}{4}  } \lVert N \rVert  _{ U_{\kg }^{ 2 }  }
      \lVert u_1 \rVert  _{ U_{ \s } ^{ 2 }  } 
      \lVert u_2 \rVert _{ V_{ \s }^{ 2 } },
  \end{align*} 
  which is acceptable since $ k_2 \sim k = O(1)$ and $ k \succ k_1 $.

  %%%%%%%%%%%%%%%%%%%%%%% CASE 4 %%%%%%%%%%%%%%%%%%%%%%%
  \textbf{Case IV}: $ k_1\succ k $.
   %Again, we consider the cases $ k\succ 1 $ and $ k\lesssim 1 $ separately.
   \emph{Case IV.1}: $ k \geq 10 $.  We can proceed as in (\ref{eq:endpointperturb}) and
   use the increased radial
   Strichartz range: 
  %\begin{align*} 
  %  \abs{I(\textbf{k})}\lesssim \norm{N}_{L^{\infty-}_t
  %  L^{2+}_x}\norm{u_1}_{L^{2}_t L^{\frac{10}{3}+}_x}\norm{u_2}_{L^{2+}_t L^{5-}_x}\lesssim
  %  2^{-\frac{k_2}{2}+}\norm{N}_{U^2_{\kg,k}}\norm{u_1}_{U^2_{\s,k_1}}\norm{u_2}_{V^2_{\s,k_2}}.
  %\end{align*}
  \begin{align*}
      \abs{I}
      \lesssim \norm{N}_{L^{p}_t L^{q}_x}\norm{u_1}_{L^{p_1}_t
        L^{q_1}_x}\norm{u_2}_{L^{p_2}_t L^{q_2}_x}
      \lesssim
        2^{\left( -\frac{1}{2} +\frac{\epsilon}{2}   \right)k}
        \norm{N}_{U^2_{\kg}}\norm{u_1}_{U^2_{\s}}\norm{u_2}_{V^2_{\s}}\\
  \end{align*}
  where   
  \begin{align*}
    \left( p,q \right) = \left( \frac{2}{\epsilon}, \frac{2}{1-\epsilon}  \right),
    \,
    \left( p_1,q_1 \right) = \left(2,\frac{10}{3} +\epsilon\right) ,
    \,\text{and}\,
    \left( p_2,q_2 \right) = \left( \frac{2}{1-\epsilon}, \left( \frac{1}{2} +
    \frac{\epsilon}{2} -\frac{ 3 }{ 10+3\epsilon } \right)^{-1} \right).
  \end{align*} 
  \emph{Case IV.2}: $ k<10 $. We use the bilinear restriction estimate (\ref{eq:candybilinear 2})
  \begin{align*}
    \lvert I \rvert  
      \lesssim  \lVert N u_1 \rVert  _{ L^{ \frac{8}{5} } L^{ \frac{3}{2} } } 
      \lVert  u_2 \rVert _{ L^{ \frac{8}{3} }  L^{ 3 }  } 
      \lesssim 2^{\frac{k}{12}-\frac{k_1}{3} -\frac{k_2}{4}  } \lVert N \rVert  _{ U_{\kg}^{ 2 }  }  \lVert u_1 \rVert _{ U_{ \s}^2} \lVert u_2 \rVert_{ V_{\s}^2 }.
   \end{align*} 
  which is acceptable since $ k_2 \sim k_1 \gtrsim c $ and $ 2^{ \frac{k}{12}  } \lesssim 1 $.
\end{proof}

\subsection{Trilinear estimates --  Klein-Gordon nonlinearity}
\begin{prop}\label{trilinearKG}
  Let $ \epsilon> 0 $ be sufficiently small, and let $N$, $u_1$ and $u_2$ be functions localized at frequency $\sim 2^k$, $2^{k_1}$ and $2^{k_2}$.
  Then,
  \begin{align}\label{eq:trilinearKG}
    \left\lvert \int_{\R^{1+3}} 
    \langle D \rangle  ^{ -1 }( u_{1}\bar u_{2}) N \dif x \dif t\right\rvert
    &\lesssim  \langle 2^{ k }  \rangle ^{ -1 } \lVert N \rVert_{V_{ \kg }^{ 2 } } \lVert u_{1} \rVert _{
    U_{ \s }^{ 2 } }  \lVert u_2 \rVert  _{ U_{ \s }^{ 2 }  }.
  \end{align} 
  When $ k < -10$ the same estimate holds when $u = \tilde P_{< -10} u$; similarly for $ k_1 $ and $ k_2 $.
\end{prop}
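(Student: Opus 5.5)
The plan mirrors the proof of Proposition \ref{trilinearS}, now with $N\in V^2_{\kg}$ and both Schrödinger waves in $U^2_{\s}$. Since $\langle D\rangle^{-1}$ acts on the output frequency $2^k$, it contributes exactly the factor $\langle 2^k\rangle^{-1}$. So it suffices to prove
\[
\Bigl|\int u_1\bar u_2 N\Bigr|\lesssim \|N\|_{V^2_{\kg}}\|u_1\|_{U^2_{\s}}\|u_2\|_{U^2_{\s}},
\]
with no decay required. We again fix $c=-3$, assume $\med\{k,k_1,k_2\}\sim\max\{k,k_1,k_2\}$, and split according to the case diagram of Figure \ref{fig:kleingordon}. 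Because $N$ only lies in $V^2$, every Strichartz norm placed on $N$ must have time exponent $p>2$. The Schrödinger factors, being in $U^2$, may use $L^2_t$ norms, including $L^2L^6$ and near-endpoint $L^2L^{q}$.

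\textbf{Case I} is $\max\{k,k_1,k_2\}<c$. By Lemma \ref{lemma: kg nonresonance}(i) all interactions have modulation $\gtrsim1$. Decompose $I=I_0+I_1+I_2$ according to which factor carries the dominant modulation $2^j$, $j\gtrsim1$. Put that factor in $L^2_{t,x}$ with the gain $2^{-j/2}$ coming from $\dot X^{0,\frac12,\infty}$, which is available for both $U^2$ and $V^2$ by Proposition \ref{properties}(ii). Put the other two factors in $L^4_{t,x}$ via Bernstein and $L^{10/3}$, or for $N$ via the low-frequency Schrödinger-type estimate in Proposition \ref{properties}(v) with $p>2$. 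This is exactly as in \eqref{eq:ll}, and the positive powers of $2^{k_i}$ with $k_i<c$ are harmless.

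\textbf{Case II} is $k_1\sim k_2$ with $k\lesssim k_1$, or all three comparable.

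\emph{Subcase II.1}: $k\sim k_1\sim k_2$. Use Hölder with $N\in L^{p}L^{q}$ for $(p,q)$ near $(\infty,2)$, and $u_1,u_2$ in radial Strichartz spaces with $\sigma_S>0$, for instance both in $L^2L^{4}$, which is admissible for Schrödinger. Near $(\infty,2)$, $\sigma_W\approx0$. The total dyadic power is then $\approx 2^{-k_1/4-k_2/4}$ times small losses, which is $\lesssim1$ for $k\gtrsim c$.

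\emph{Subcase II.2}: $k\prec k_1\sim k_2$ with $k_1>10$. Lemma \ref{lemma: kg nonresonance}(ii) does not apply directly, because there the small frequency is $k_2$, not $k$. So this interaction is potentially resonant, and this is the gold/yellow region. If $k\ge 0$ (wave regime), Hölder $\|N\|_{L^{p}L^{2+}}\|u_1\|_{L^2L^{4}}\|u_2\|_{L^{2+}L^{4-}}$ gives a gain $2^{-k_1/2+}$ against a loss of at most $2^{0+}$ from $N$. This is fine because $k\le k_1$.

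\textbf{Case III} is $k_1\succ k_2$, so $k\sim k_1$; the symmetric case is identical.

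\emph{Subcase III.1}: $k_1>10$. Lemma \ref{lemma: kg nonresonance}(ii) gives $|\Psi|\gtrsim2^{2k_1}$. Repeat the modulation decomposition of Case II.2 of Proposition \ref{trilinearS}. When $N$ carries the modulation, use $2^{-j/2}$ with $j\gtrsim 2k_1$ and put the two $u$'s in $L^4$. When $u_1$ carries it, put $N\in L^{\infty-}L^3$ and $u_2\in L^{2}L^6$. When $u_2$ carries it, put $N\in L^{\infty-}L^3$ and $u_1\in L^2L^6$. Each gives a factor $2^{-k_1+k/2+}$, which is more than enough.

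\emph{Subcase III.2}: $-3\le k_1\le10$. Use the bilinear estimate \eqref{eq:candybilinear 3} on $u_1\bar u_2$ in $L^{8/5}L^{3/2}$, and place $N\in L^{8/3}L^3$. This is allowed in $V^2$ since $8/3>2$, and $N$ has bounded frequency. The resulting factor is $2^{k_2/12}\lesssim1$.

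\textbf{Main obstacle.} The hardest part is the low-frequency Klein--Gordon regime $k<0$, $k\prec k_1\sim k_2$, i.e. the remaining part of Case II.2. This is the resonant interaction highlighted in \eqref{eq:failure}, where linear Strichartz loses $2^{-k/4}$. Conjugation does not create transversality between $u_1$ and $u_2$, which have comparable frequencies. So I would pair $N$ with one of them instead, writing $\int (N u_1)\bar u_2$, and apply \eqref{eq:candybilinear 2} to $Nu_1$ in $L^{8/5}L^{3/2}$, with $u_2\in L^{8/3}L^3$. The difficulty is that \eqref{eq:candybilinear 2} is stated for $N\in U^2_{\kg}$, whereas here $N\in V^2_{\kg}$.

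I would first try to interpolate the $U^2$ bilinear bound with a trivial $U^{p}$ bound that has a logarithmic or mild loss. Standard $V^2\subset U^p$ interpolation, as in \cite{Ca} or \cite{HHK}, then gives a $V^2$ version losing at most $2^{\delta(k_1-k)}$. This loss is absorbed by the margin $2^{k/12-k_1/3-k_2/4}$, since $k_1>-3$ and $k<0$.

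If that fails, the fallback is duality: move the $V^2$ norm onto $u_2$, since the roles are symmetric after conjugation, or use $\|Nu_1\|$ with $u_1\in U^2$ and absorb the $V^2$ factor into an $L^{8/3}L^3$ norm of whichever wave is not in the bilinear pair.
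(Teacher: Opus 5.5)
Your Cases I, II.1, III.1 and III.2 follow the paper's proof: modulation decomposition in the nonresonant regimes, radial Strichartz when $k_1\sim k_2$, and \eqref{eq:candybilinear 3} when $k_1\sim 1$. The gap is in the subcase you call the main obstacle, $k<0$ with $k\prec k_1\sim k_2$. There you rely on a $V^2_{\kg}$ version of \eqref{eq:candybilinear 2}, obtained by an interpolation you never carry out; you do not say which $U^p$ bound you interpolate against. Your fallback of moving the $V^2$ norm onto $u_2$ is not allowed, because the statement fixes $N\in V^2_{\kg}$ and $u_1,u_2\in U^2_{\s}$. As written, that regime is unproved.

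What you are missing is that this regime is not an obstacle at all, because your own Subcase II.1 bound does not depend on $k$. Hölder with exactly $(\infty,2)$ for $N$ and $(2,4)$ for both Schrödinger factors gives
\[
|I|\lesssim \lVert P_kN\rVert_{L^\infty_tL^2_x}\lVert u_1\rVert_{L^2_tL^4_x}\lVert u_2\rVert_{L^2_tL^4_x}\lesssim 2^{-\frac{k_1+k_2}{4}}\lVert N\rVert_{V^2_{\kg}}\lVert u_1\rVert_{U^2_{\s}}\lVert u_2\rVert_{U^2_{\s}}.
\]
This holds because $\sigma_S(\infty,2)=\sigma_W(\infty,2)=0$, so $\lVert P_kN\rVert_{L^\infty L^2}\lesssim\lVert N\rVert_{V^2_{\kg}}$ carries no weight in $k$. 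Meanwhile $(2,4)$ lies strictly inside the radial Schrödinger range and is allowed for the two $U^2$ factors. Since $k_1\sim k_2\gtrsim c$, this settles every $k\lesssim k_1$ at once, including $k\to-\infty$. This is exactly the paper's Case II, which treats all of $|k_1-k_2|\le 10$ in one line.

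There is also no reason to perturb the exponents as you do in II.2, since neither Schrödinger factor is in $V^2$. In Proposition \ref{trilinearS} the $V^2$ factor is a Schrödinger wave, which cannot take $L^2_t$, so $N$ cannot be placed in $L^\infty_tL^2_x$ there. In the present proposition the roles are reversed and nothing prevents it.

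Your interpolation idea could in fact be completed. For example, $\lVert Nu_1\rVert_{L^{8/5}L^{3/2}}\le\lVert N\rVert_{L^8L^{12/5}}\lVert u_1\rVert_{L^2L^4}$ is a $U^8$ bound with no loss in $k$, since $\sigma_S(8,\tfrac{12}{5})=0$. But that bound already closes the estimate by itself, so the bilinear restriction step is superfluous here.
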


\begin{proof}[Proof of Proposition (\ref{trilinearKG})]
  It is enough to show that
  \begin{align}
    \left\lvert \int_{\R^{1+3}} 
    u_{1}\bar u_{2} N \dif x \dif t\right\rvert
    &\lesssim  \lVert N \rVert_{V_{ \kg }^{ 2 } } \lVert u_{1} \rVert _{ U_{ \s }^{
    2 } }  \lVert u_2 \rVert  _{ U_{ \s }^{ 2 }  }.
  \end{align} 
  We organize the proof in cases depending on the relationship between $ k_1 $ and $ k_2 $.
  Fix an absolute frequency cutoff $ c $, which we set at $ c = -3 $.

  %%%%%%%%%%%%%%%%%%%%%%% CASE 1' %%%%%%%%%%%%%%%%%%%%%%%
  \textbf{Case I}: $ \max \left\{ k, k_1, k_2 \right\} < c $.
  Since  we only use $ V^{ 2 }  $-admissible norms in (\ref{eq:ll}), this case follows with essentially the same proof.

  We shall assume for the remaining of this proof that $ \max\left\{ k ,k_1, k_2 \right\} \gtrsim
  c$ and that $ \med \left\{ k, k_1, k_2 \right\} \sim \max \left\{ k, k_1, k_2
  \right\}$.
  Furthermore, since (\ref{eq:trilinearKG}) is symmetric with respect to $ k_1 $ and $ k_2
  $, we may assume that $ k_1\geq k_2 $.

  %%%%%%%%%%%%%%%%%%%%%%% CASE 2' %%%%%%%%%%%%%%%%%%%%%%%
  \textbf{Case II}: $ \lvert k_1-k_2 \rvert \leq 10 $.
  We use the extended radial Strichartz range to obtain
  \begin{align*}
    \lvert I  \rvert 
    &\lesssim \lVert N \rVert  _{ L^{ \infty } L^{ 2 }  } 
    \lVert u_1 \rVert  _{ L^{ 2 } L^{ 4}  } \lVert u_2 \rVert_{ L^{ 2 } L^{ 4 }  } 
    \lesssim 2^{ -\frac{k_1}{2}  }  \lVert N \rVert  _{ V_{ \kg }^{ 2 }  }
    \lVert u_1 \rVert  _{ U_{ \s }^{ 2 }  } 
    \lVert u_2 \rVert  _{ U_{ \s }^{ 2 } },
  \end{align*} 
  which is acceptable since $ k_1\gtrsim c $.

  %%%%%%%%%%%%%%%%%%%%%%% CASE 3' %%%%%%%%%%%%%%%%%%%%%%%
  \textbf{Case III}: $ k_1 \succ k_2 $.
  \emph{Case III.1}:
  If $ k_1\succ 1 $, by  Lemma \ref{lemma: kg nonresonance} $I( \textbf{k})$ only has nontrivial
  contributions when $j\gtrsim 2k_1$, and as such we decompose
  \[
    I \defeq \int Nu_1 u_2\dif x\dif t = I_0 + I_1 + I_2,
  \] 
  where
  \begin{align*}
    &I_0 = \sum_{j\gtrsim 2k_1} \int Q_{ j} N Q_{\leq j}u_1 Q_{\leq j}u_{ 2}\dif x \dif t, \\
    &I_1 = \sum_{j\gtrsim 2k_1} \int Q_{< j} N Q_{ j}u_1 Q_{< j}u_{ 2}\dif x \dif t, \\
    &I_2 = \sum_{j\gtrsim 2k_1} \int Q_{< j} N Q_{< j}u_1 Q_{ j}u_{2}\dif x \dif t.
  \end{align*}

  Estimating each interaction directly, we get
  \begin{align*}
    \lvert I_{2} \rvert 
    &\lesssim \sum_{j\gtrsim 2k_1} \lVert Q_{ j } u_{2} \rVert_{L^{2}_{t,x}} \lVert Q_{\leq j}N
    \rVert _{L^{\infty}_{t}L^{3}_{x}} \lVert Q_{\leq j} u_{ 1} \rVert_{L^{2}_{t}L_{x}^{6}} \\
     &\lesssim \sum_{j\gtrsim 2k_1}2^{-\frac{j}{2}} \lVert Q_{ j } u_{2} \rVert_{\dot
     X^{0,\frac{1}{2}}}
       \lVert Q_{\leq j}N \rVert _{L^{\infty} L^{3}} \lVert Q_{\leq j}u_{1} \rVert
       _{L^{2}L^{6}}\\
     &\lesssim  \sum_{j\gtrsim 2k_1}2^{ \frac{1}{2} (k-j) } \lVert Q_{ j } u_{2} \rVert_{\dot
     X^{0,\frac{1}{2}}}
       \lVert Q_{\leq j}N \rVert _{L^{\infty}L^{2}} \lVert Q_{\leq j}u_{1} \rVert
       _{L^{2}L^{6}}\\
    &\lesssim 2^{ \frac{1}{2} (k-2k_1) } \lVert N \rVert _{ V_{ \kg }^{ 2 }  }\lVert u_{1}
    \rVert _{ U_{ \s }^{ 2 } } \lVert u_2 \rVert _{ U_{ \s }^{ 2 } }.
  \end{align*}
  The estimates for $I_0$ and $I_1$ follow analogously.

  \emph{Case III.2}: If $ \lvert k_1-1 \rvert  \leq 10$, we use the bilinear restriction estimate
  (\ref{eq:candybilinear 3})
  \begin{align*}
    \lvert I \rvert 
    &\lesssim \lVert u_1 u_2 \rVert_{ L^{ \frac{8}{5} } L^{ \frac{3}{2} } }
    \lVert N \rVert  _{ L^{ \frac{8}{3}  } L^{ 3 } } 
    \lesssim 2^{\frac{k_2}{12} -\frac{k}{4}  } \lVert N \rVert  _{ V_{ \kg }^{ 2 } }
    \lVert u_1 \rVert_{ U_{ \s }^{ 2 }  } \lVert u_2 \rVert  _{ U_{ \s }^{ 2 }  },
  \end{align*} 
  which is acceptable because $ k \sim 1 $.
\end{proof}

We can now determine the regularity exponents $ s $ and $ r $ for which
(\ref{eq:finalSchrodinger}) and (\ref{eq:finalKleinGordon}) hold.
For $k,k_1,k_2\geq -10 $, we can combine (\ref{eq:finalSchrodinger}) and (\ref{eq:finalKleinGordon})
into the more compact estimate
\begin{align*}
  \left\lvert \int_{ \R^{ 1+3 }  } Nu_1 u_2\dif x\dif t \right\rvert 
  &\lesssim 
  \min\left\{
    2^{ rk+sk_1-sk_2 } 
    \lVert N \rVert  _{ U_{ \kg }^{ 2 } }
    \lVert u_1 \rVert  _{ U_{ \s }^{ 2 }  } 
    \lVert u_2 \rVert  _{ V_{ \s }^{ 2 }  },
    \right.\\
  &\hspace{13em}
  \left.
    2^{ (1-r)k + sk_1 + sk_2} 
    \lVert N \rVert  _{ V_{ \kg }^{ 2 } }
    \lVert u_1 \rVert  _{ U_{ \s }^{ 2 }  } 
    \lVert u_2 \rVert  _{ U_{ \s }^{ 2 }  }
  \right\},
\end{align*} 
which is compatible with Propositions (\ref{trilinearS}) and (\ref{trilinearKG}) provided we
choose $ s $ and $ r $ satisfying
\begin{align*}
  2^{ \left( -\frac{1}{2} + \frac{\epsilon}{2}  \right)k } \lesssim 2^{ rk+sk_1-sk_2 }
  \quad\text{and}\quad
  2^{ -k } \lesssim 2^{ (1-r)k + sk_1 + sk_2}.
\end{align*} 
This forces $ s \geq 0 $ and $ \left( s-\frac{1}{2} + \frac{\epsilon}{2}  \right) \leq r\leq
\left( s+2 \right) $.

By shrinking the range on $ s $, we can sum over different frequencies and upgrade the frequency localized
estimates to functions with arbitrary frequency support.

\begin{lemma}\label{G}
  Let  $ s \geq 0 $ and let $ \left( r -\frac{1}{2} + \frac{\epsilon}{2} \right)  < s <
  \left( r + 2 \right)  $ be as above.
  Then, 
  \begin{align*}
    \left\lvert \int N u_1u_2\dif x\dif t \right\rvert 
    &\lesssim 
      \min\left\{
        \lVert N \rVert    _{ Z_{ \kg}^{ r } }
        \lVert u_1 \rVert  _{ Z_{ \s }^{s}  } 
        \lVert u_2 \rVert  _{ \ell^{ 2 } V_{ \s } ^{ 2,-s }   },
        \lVert N \rVert    _{ \ell^{ 2 } V^{ 2,-r }_{ \kg} }
        \lVert u_1 \rVert  _{ Z_{ \s }^{s}  } 
        \lVert u_2 \rVert  _{ Z_{ \s }^{s}  }
    \right\}.
  \end{align*} 
\end{lemma}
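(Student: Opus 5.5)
The plan is to reduce the estimate to the frequency-localized bounds of Propositions \ref{trilinearS} and \ref{trilinearKG}, and then sum dyadically. The strict inequalities in the hypothesis $\left(r-\frac12+\frac\epsilon2\right)<s<(r+2)$ are used to produce a small surplus power $2^{-\delta\max\{k,k_1,k_2\}}$, with $\delta>0$, in the relevant high-frequency configurations.

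\textbf{Step 1: decomposition.} Decompose
\begin{align*}
N=P_{<-10}N+\sum_{k\geq-10}P_kN,
\end{align*}
and decompose $u_1,u_2$ in the same way. Expand $\int Nu_1u_2$ as a sum over triples $(k,k_1,k_2)$, with the low piece $P_{<-10}$ indexed as a single frequency. By Fourier support considerations, only triples with $\med\{k,k_1,k_2\}\sim\max\{k,k_1,k_2\}$ contribute, up to $O(1)$ neighbours. The low-frequency block is covered by the final sentences of Propositions \ref{trilinearS} and \ref{trilinearKG}.

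\textbf{Step 2: the first bound in the minimum.} For each contributing triple, apply Proposition \ref{trilinearS} with $u_2$ measured in $V^2_\s$. This gives the factor $\min\{1,2^{(-\frac12+\frac\epsilon2)k}\}$. Next insert the weights $2^{rk}$, $2^{sk_1}$, $2^{-sk_2}$ that define $Z^r_\kg$, $Z^s_\s$ and $\ell^2V^{2,-s}_\s$. What remains is the coefficient
\begin{align*}
2^{-rk-sk_1+sk_2}\min\{1,2^{(-\frac12+\frac\epsilon2)k}\},
\end{align*}
which must be summed. This is done separately in the three configurations $k_1\sim k_2\gtrsim k$, $k\sim k_1\gtrsim k_2$ and $k\sim k_2\gtrsim k_1$.
\begin{itemize}
\item In the configuration $k_1\sim k_2$, the $s$-weights cancel. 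The sum over the low frequency is then geometric, using the inequality between $r$, $s$ and $-\frac12+\frac\epsilon2$ contained in the hypothesis. The remaining diagonal sum over $k_1\sim k_2$ is handled by Cauchy--Schwarz in $\ell^2$.
\item In the configurations where $k$ is comparable to one of $k_1,k_2$, the strict inequality $s<r+2$ (respectively $s>r-\frac12+\frac\epsilon2$) gives the decay in the separated low frequency. Again the diagonal pair is summed by Cauchy--Schwarz.
\end{itemize}
The $U^2$ norms of the pieces are square-summed into the $Z$ norms. The $V^2$ norms are square-summed into the $\ell^2V^2$ norm, using Proposition \ref{prop:ellp} and the disposability of $P_k$.

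\textbf{Step 3: the second bound in the minimum.} The second bound is handled identically, using Proposition \ref{trilinearKG}. This time $N$ is placed in $\ell^2V^{2,-r}_\kg$ and both $u_i$ in $Z^s_\s$. The coefficient to be summed is
\begin{align*}
\langle 2^k\rangle^{-1}2^{rk-sk_1-sk_2},
\end{align*}
and it is summed with the same case split. Here $s\geq0$ controls the high--high to low interactions, and the $\langle 2^k\rangle^{-1}$ gain together with the upper relation between $s$ and $r$ controls the high $k$ regime.

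\textbf{Main obstacle.} I expect the main difficulty to be the bookkeeping in the high--high$\to$low sums. There the bound must be summed over the unbounded low index while only $\ell^2$ control is available on the diagonal. This is exactly where the strictness of the inequalities is needed to avoid a logarithmic divergence, and where the $P_{<-10}$ block must be checked separately against the low-frequency clauses of the two propositions.
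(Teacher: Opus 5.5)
Your route is the paper's: apply Propositions \ref{trilinearS} and \ref{trilinearKG} to the dyadic pieces and sum. The paper packages the summation as $\sum 2^{-\delta\min\{k,k_1,k_2\}}x_ky_{k_1}z_{k_2}\lesssim_\delta\lVert x\rVert_{\ell^2}\lVert y\rVert_{\ell^2}\lVert z\rVert_{\ell^2}$. Beyond the propositions, the only content of the lemma is checking that the weighted coefficients are $\lesssim 2^{-\delta\min\{k,k_1,k_2\}}$. You never carry out that check, and the inequalities you attach to the configurations are the wrong ones.

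For the first bound and $k\geq 0$, the coefficient $2^{-rk-sk_1+sk_2}2^{(-\frac12+\frac\epsilon2)k}$ equals:
\begin{itemize}
\item $2^{-(r+\frac12-\frac\epsilon2)k}$ if $k_1\sim k_2\succeq k$;
\item $2^{-(r+\frac12-\frac\epsilon2)k-s(k-k_2)}$ if $k\sim k_1\succ k_2$;
\item $2^{-(r-s+\frac12-\frac\epsilon2)k-sk_1}$ if $k\sim k_2\succ k_1$.
\end{itemize}
So what is needed is $r>s-\frac12+\frac\epsilon2$ (with $s\geq 0$). The gain sits in the large comparable frequency, not the separated one, and the separated index is summed using only $s\geq0$. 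Neither $s<r+2$ nor $s>r-\frac12+\frac\epsilon2$ supplies this; the latter is the reverse inequality. The hypothesis as printed has $r$ and $s$ interchanged relative to the constraint $s-\frac12+\frac\epsilon2\leq r\leq s+2$ derived just before the lemma. Read literally it breaks the first bound: $s=1$, $r=0$ satisfies it, yet the coefficient is $2^{(\frac12+\frac\epsilon2)k}$ for $k\sim k_2$, $k_1=0$.

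In Step 3 your coefficient $\langle 2^k\rangle^{-1}2^{rk-sk_1-sk_2}$ is the right one. Its factor $\langle 2^k\rangle^{-1}$ is just the $\langle D\rangle^{-1}$ of the nonlinearity, already used in Proposition \ref{trilinearKG}, and cannot be spent again. For $k\sim k_1\succ k_2$ the coefficient is $2^{(r-1-s)k-sk_2}$. For $k_1\sim k_2\succeq k$ it is $2^{(r-1)k-2sk_1}\leq 2^{(r-1-2s)k}$. Summability therefore forces $r<s+1$, which $r<s+2$ does not give. In particular, $s\geq0$ alone does not control the high--high$\to$low case (take $s=0$, $r=\frac32$). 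The comparison $2^{-k}\lesssim 2^{(1-r)k+sk_1+sk_2}$ that produced $r\leq s+2$ appears to count $\langle D\rangle^{-1}$ twice.

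Done correctly, your argument, and likewise the paper's sketch, proves the lemma for $s\geq 0$ and $s-\frac12+\frac\epsilon2<r<s+1$. This covers the case $s=0$, $r=-\frac12+\epsilon$ used in Subsection \ref{mainproof}, but not the full range as stated.
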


\begin{proof}
  For frequencies above $-10 $, the claim follows immediately from Propositions \ref{trilinearS} and \ref{trilinearKG} and the
  observation that, by Hölder's and Young's convolution inequalities,
  \begin{align*}
    \sum_{ \substack{k,k_1,k_2\in\N \\ |\max\left\{ k,k_1,k_2 \right\} -\med\left\{
    k,k_1,k_2 \right\}|\leq 10} } 2^{ -\delta \min\left\{ k,k_1,k_2 \right\}  }x_{ k } y_{
    k_1 } z_{ k_2 }  
    &\lesssim_{ \delta }  \lVert x \rVert_{ \ell^{ 2 }  } \lVert y \rVert_{ \ell^{ 2 }  } \lVert z
    \rVert  _{ \ell^{ 2 }  },  
  \end{align*} 
  for any $ \delta > 0 $ and  all $ x,y,z\in\ell^{ 2 } (\N) $.
  For low frequencies, the result follows immediately from our choice of projections 
  $ P_{ < -10 } $ which avoids summability issues.
\end{proof}

%\begin{coro}\label{coro:extragain}
%  Let $ k,k_1,k_2\geq -10 $ and  $u_{j} = \tilde P_{k_{j}}u_{j}$, $ j = 1,2 $, and $N
%  = \tilde P_{k}N$ be frequency localized functions.
%  Then,
%  \begin{align*}
%    \left\lvert \int_{\R^{1+3}} N_{k}u_{1}u_{2}\dif x\dif t \right\rvert
%    &\lesssim  2^{ -\epsilon \min(k,k_1,k_2)} 
%      \min\left(\lVert N_{k} \rVert_{U_{ \kg }^{ 2,s } } \lVert u_{1} \rVert _{ U_{ \s,k_1 }^{ 2,s }
%      } \lVert u_{2} \rVert _{V_{ \s }^{ 2,-s } },\right.\\
%      &\hspace{14em} \left.\lVert N_{k} \rVert_{V_{ \kg,k }^{ 2,-s } } \lVert u_{1} \rVert
%      _{ U_{ \s,k_1 }^{ 2,s } } \lVert u_{2} \rVert _{U_{ \s }^{ 2,s } } \right) 
%  \end{align*} 
%  for any $ 0 < \epsilon \leq s $.
%\end{coro}
%

\subsection{Proof of Theorem \ref{mainthm}}\label{mainproof}

This argument is similar to the one in \cite{BH-dkg}, except we use different resolution
spaces.

Fix $ \epsilon > 0 $.
For simplicity, we will write the proof for $ s = 0 $ and $ r = -\frac{1}{2} +\epsilon $;
the general case follows analogously.
We will construct a solution
\begin{align*}
  \left( u,N \right) 
  &\in  Z_{ \s }^{ 0 } \times Z_{ \kg }^{ -\frac{1}{2} + \epsilon }
\end{align*} 
of the system (\ref{eq:modifiedkgs}) in integral form
\begin{nalign}\label{eq:integralform}
  u(t) &= e^{ -it\Delta } u_0 + \frac{1}{2} \int_{ 0 }^{ t } e^{ -i(t-s)\Delta } \left( uN +
  u\bar N\right) \dif s\\
    N(t) &= e^{ it\langle D \rangle   } N_0 + \int_{ 0 }^{ t } e^{ i\left( t-s \right)
    \langle  D \rangle } \langle D \rangle  ^{ -1 } \lvert u \rvert  ^2\dif s
\end{nalign} 
provided that the initial data is radial and satisfies
\begin{align*}
  &\left\lVert \left( u_0, N_0 \right)  \right\rVert_{ L_{ x }^{ 2 } \times H_{ x }^{
  -\frac{1}{2}  + \epsilon}  } \leq \delta,
\end{align*} 
for sufficiently small $ \delta $.
Let $ T(u,N) $ denote the right-hand side of (\ref{eq:integralform}).
By Lemmas \ref{inhomogeneous} and \ref{G} we conclude that
\begin{align*}
  \lVert T(u,N) \rVert _{ Z_{\s}^{0} \times Z_{\kg}^{-\frac{1}{2} +\epsilon} }
   &\lesssim\delta + \lVert u \rVert  _{ Z_{\s}^{0}  } \lVert N \rVert_{ Z_{\kg}^{-\frac{1}{2} +\epsilon} } + \lVert u \rVert  _{ Z_{\s}^{0} }^2
    \lesssim \delta + \left\lVert (u,N) \right\rVert _{ Z_{\s}^{0}\times Z_{\kg}^{-\frac{1}{2} +\epsilon} } ^2,
    % bounds for differences below
   % \\
   % \lVert T(u_1,N_1) - T(u_2,N_2) \rVert_{ Z_{\s}^{0} \times Z_{\kg}^{-\frac{1}{2} +\epsilon}}
   %   &\lesssim
   %   \left( \lVert (u_1,N_1) \rVert_{ Z_{\s}^{0} \times  Z_{\kg}^{-\frac{1}{2} +\epsilon} }
   %   \right. + \left. \lVert (u_2,N_2) \rVert_{ Z_{\s}^{0} \times  Z_{\kg}^{0} } \right) \\
   %   &\hspace{13em}\times\lVert (u_1-u_2,N_1-N_2) \rVert_{ Z_{\s}^{0} \times  Z_{\kg}^{-\frac{1}{2} +\epsilon} }.
\end{align*} 
and similar bounds for differences.
Therefore,  we can use the contraction principle in a small ball in this space to obtain a
unique solution that depends continuously on the initial data. 
%Moreover, if the initial data $ (u_0,N_0)\in H^{s} \times H^{ s }  $, the solution
%satisfies $ (u,N)\in C_{ t }^{ 1 } H_{ x }^{ s }\times C_{ t }^{ 1 } H_{ x } ^{ s } $.

To prove scattering, it is enough to show that the integrals
\begin{nalign}\label{eq:tails}
  \int_{ 0 }^{ t } e^{ is\Delta } \left( uN + u\overline N\right) \dif s
  \quad
  \text{and}
  \quad
  \int_{ 0 }^{ t } e^{i s \langle  D \rangle } \langle D \rangle  ^{ -1 } \lvert u \rvert
  ^2\dif s
\end{nalign}
converge in $ L^{ 2 }  \times H^{ -\frac{1}{2} +\epsilon }  $ as $ t\to\infty $.
Indeed, if this is the case, denote the limits of the integrals in
(\ref{eq:tails}) by $ \tilde u $ and $ \tilde N $, respectively.
By Duhamel's formula (\ref{eq:integralform}),
\begin{align*}
  u(t)-e^{ -it\Delta }(u_0+\tilde u)
  &= -e^{ -it\Delta } \int_{ t }^{ \infty } e^{ is\Delta } \left( uN+ u\overline N \right)
  \dif s,\\
  N(t) - e^{ it\langle D \rangle   } (N_0+\tilde N)
  &= e^{ it\langle D \rangle   } \int_{ t }^{ \infty } e^{ -is \langle D \rangle  }\langle D
  \rangle  ^{ -1 } \lvert u \rvert  ^2\dif s,
\end{align*}
and since the operators $ e^{ it\Delta }$ and $ e^{ -it\langle D \rangle   }  $ are unitary, 
we conclude that
\begin{align*}
  \lim_{ t\to\infty } \lVert u(t)-e^{ -it\Delta } (u_0+\tilde u) \rVert _{ L^{ 2 }_{ x }  }
    = \lim_{ t\to \infty } \lVert N(t)-e^{ it\langle D \rangle  (N_0+\tilde N) }  \rVert _{
    H^{ -\frac{1}{2} + \epsilon }_{ x }  } = 0.
\end{align*}

Now, to prove the convergence of (\ref{eq:tails}), by Proposition \ref{prop:ellp} we have
\begin{align*}
  \int_{ 0 }^{ t } e^{- i(t-s)\Delta } \left( uN +
  u\overline N\right) \dif s \in  Z_{\s}^{ 0 } \subset V^{ 2}_{\s}
  \quad
  \text{and}
  \quad
  \int_{ 0 }^{ t } e^{i(t-s)\langle  D \rangle } \langle D \rangle  ^{ -1 } \lvert u \rvert
  ^2\dif s\in Z_{\kg}^{ -\frac{1}{2} + \epsilon }.
\end{align*}
By (\ref{eq:scattering}), scattering follows.

\subsection*{Acknowledgments}
We thank Ioan Bejenaru and Federico Pasqualotto for many helpful discussions and suggestions, including feedback on previous drafts of the paper.

\subsection*{Data Availability Statement}
No datasets were generated or analyzed during the current work. 

\subsection*{Conflict of Interest Statement}
The authors have no relevant financial or non-financial interests to disclose.

%\section{Zakharov}
%\input{zakharov}

%\section{Resonance analysis}\label{section 6}
%\input{6 resonance analysis}

%\section{Old stuff}
%\input{7 old stuff}

%\bibliographystyle{plain}
%\bibliography{./refs.bib}

\printbibliography

\end{document}